\newtheorem{theorem}{Theorem}[section]    
\newtheorem{lemma}[theorem]{Lemma}          
\newtheorem{corollary}[theorem]{Corollary}
\newtheorem{proposition}[theorem]{Proposition}
\theoremstyle{definition}
\title{A compactification for the spaces of convex projective structures on manifolds}
\author{Daniele Alessandrini \\ \small daniele.alessandrini@gmail.com}
\date{}
\newcommand{\nuovo}[1]{{{\bfseries \upshape #1}}}
\newcommand{\enne}{{\mathbb{N}}}
\newcommand{\ze}{{\mathbb{Z}}}
\newcommand{\qu}{{\mathbb{Q}}}
\newcommand{\erre}{{\mathbb{R}}}
\newcommand{\ci}{{\mathbb{C}}}
\newcommand{\pro}{{\mathbb{P}}}
\newcommand{\cappa}{{\mathbb{K}}}
\newcommand{\effe}{{\mathbb{F}}}
\newcommand{\iper}{{\mathbb{H}}}
\newcommand{\ident}{{\mbox{Id}}}
\newcommand{\tro}{{\mathbb{T}}}
\newcommand{\anello}{{\mathcal{A}}}
\newcommand{\ocors}{{\mathcal{O}}}
\newcommand{\rappr}{{\mathcal{R}}}
\newcommand{\famil}{{\mathcal{F}}}
\newcommand{\gfamil}{{\mathcal{G}}}
\newcommand{\ameba}{{\mathcal{A}}}
\DeclareMathOperator{\trace}{tr}
\DeclareMathOperator{\homom}{Hom}
\DeclareMathOperator{\charat}{Char}
\DeclareMathOperator{\card}{Card}
\DeclareMathOperator{\Log}{Log}
\DeclareMathOperator{\adjoint}{Ad}
\DeclareMathOperator{\spec}{Spec}
\DeclareMathOperator{\hol}{hol}
\DeclareMathOperator{\diff}{Diff}
\newenvironment{gmatrice}{ \begin{pmatrix} }{ \end{pmatrix} }
\newenvironment{pmatrice}{ \left( \begin{smallmatrix} }{ \end{smallmatrix}  \right) }
\newcommand{\freccia}{{\ \longrightarrow \ }}
\newcommand{\tende}{{\ \rightarrow \ }}
\newcommand{\numero}{{\textsection}}
\newcommand{\figuresize}{4.9cm} 
\begin{document}

\sloppy

\maketitle



\section{Introduction}

Let $M$ be a closed oriented $n$-manifold such that $\pi_1(M)$ is virtually centerless, it is Gromov-hyperbolic and it is torsion-free. We denote by $\mathcal{T}^{c}_{\erre\pro^n}(M)$ the parameter space of marked convex projective structures on $M$. In this paper we construct a compactification $\mathcal{T}^{c}_{\erre\pro^n}(M) \cup \partial \mathcal{T}^{c}_{\erre\pro^n}(M)$ such that the action of the mapping class group of $M$ extends continuously to an action on the boundary. The construction generalizes the construction of compactification of Teichm\"uller spaces, and is made by using some techniques of tropical geometry we developed in \cite{A1}. An interpretation of the boundary points is then given in \cite{A2}.

If $S$ is an orientable hyperbolic surface of finite type, we denote by $\mathcal{T}^{cf}_{\iper^2}(S)$ the Teichm\"uller space of $S$. The original construction of compactification of $\mathcal{T}^{cf}_{\iper^2}(S)$ was made by Thurston, who constructed a boundary $\partial \mathcal{T}^{cf}_{\iper^2}(S)$ with a natural action of the mapping class group of $S$ preserving a natural piecewise linear structure on $\partial \mathcal{T}^{cf}_{\iper^2}(S)$.

Later, Morgan and Shalen constructed the same boundary using different techniques, mostly from algebra and complex algebraic geometry (see \cite{MS84}, \cite{MS88}, \cite{MS88'}). In their construction they identify the Teichm\"uller space with a connected component of the real part of a complex algebraic set, namely the variety of characters of representations of $\pi_1(S)$ in $SL_2(\ci)$. This variety was generated by the trace functions $I_\gamma$, $\gamma \in \pi_1(S)$, and the compactification was made by taking the limit points of the ratios of the values ${\left[\log(|I_\gamma(x)|+2)\right]}_{\gamma \in \pi_1(S)}$. With every boundary points they associated a valuation of the field of fractions of an irreducible component of the character variety, and this valuation defined in a natural way an action of $\pi_1(S)$ on a real tree (a generalization of an ordinary simplicial tree), whose projectivized spectrum corresponded to the boundary point. They also showed that every action of $\pi_1(S)$ on a real tree induced, dually, a measured lamination on $S$, recovering the interpretation of Thurston.

Note that the Teichm\"uller space has the structure of connected component of a real algebraic set, while the boundary has a piecewise linear structure. Moreover points of the interior part correspond to Riemann surfaces (complex algebraic curves), while the boundary points correspond to actions on real trees, a generalization of simplicial trees, that are polyhedral objects. Hence both the construction of the boundaries and the interpretation of boundary points seem to be the effect of a degeneration from an algebraic object to a polyhedral object. This recalls directly some features of tropical geometry, the geometry of the tropical semifield, a semifield $\tro$ with $\erre$ as the underlying set and $\max$ and $+$ as, respectively, addiction and multiplication. There is a deformation, the Maslov dequantization, of the semifield $\erre_{> 0}$ in the tropical semifield, made by taking logarithms of real numbers with increasing bases. This deformation transform algebraic varieties into tropical varieties, that are polyhedral subsets of $\erre^n$. A tropical variety can be described as the image, through the componentwise valuation map, of an algebraic variety over a non-archimedean field. We worked on this subject in \cite{A1}.

Here we present a construction of compactification, born in the framework of tropical geometry, that generalizes the construction of \cite{MS84}. We work directly with real semi-algebraic sets, and this allows us to consider more general families of functions. Given a semi-algebraic set $V$ and a proper family of positive continuous semi-algebraic functions ${\{f_i\}}_{i \in I}$, we can construct a compactification of $V$ by applying the Maslov dequantization to it, more precisely we use an inverse system of logarithmic limit sets, looking at the asymptotic behavior of the functions $\log(f_i)$ (see \cite{A1} and section \ref{sez:compactification}).

The properties of the Morgan-Shalen compactification can be extended to this more general setting: if the family ${\{f_i\}}_{i \in I}$ is invariant for the action of a group, the action can be extended continuously to the boundary, and the boundary is the image of an extension of $V$ to a non-archimedean field. Moreover, using the fact that logarithmic limit sets are polyhedral complexes (see \cite{A1}), under some hypotheses it is possible to give a natural piecewise linear structure to the boundary. See section \ref{sez:properties} for details.

To apply this construction to the space $\mathcal{T}^{c}_{\erre\pro^n}(M)$, we need to put a structure of semi-algebraic set on it. To do this we need study the set of characters of representations of a finitely generated group $\Gamma$ in $SL_{n+1}(\erre)$, denoted by $\overline{\charat}(\Gamma, SL_{n+1}(\erre))$. We show that this set can be identified with a closed semi-algebraic subset of an affine space $\erre^m$, and we prove some other properties in theorem \ref{teo:real char} and corollary \ref{corol:real char}. See section \ref{sez:realchar} for details.

Then we recall the definition of the parameter spaces for marked and based projective structures on an $n$-manifold $M$ as above. We study the relations between these spaces and the spaces $\overline{\charat}(\Gamma, SL_{n+1}(\erre))$. More specifically, we prove that $\mathcal{T}^{c}_{\erre\pro^n}(M)$ is a closed semi-algebraic subsets of $\overline{\charat}(\pi_1(M), SL_{n+1}(\erre))$. This fact is based on some deep results of Benoist (\cite{Be1}, \cite{Be2}, \cite{Be3}) and on corollary \ref{corol:real char}. Then we choose a family of semi-algebraic functions on $\mathcal{T}^{c}_{\erre\pro^n}(M)$ representing the translation lengths of the elements of $\pi_1(M)$ with respect to the Hilbert metric, and we use this family to construct the compactification. This family is invariant for the action of the mapping class group of $M$, hence the action of the mapping class group extends continuously to the boundary. See section \ref{sez:projective} for details.

In the companion paper \cite{A2} we investigate which objects can be used for the interpretation of the boundary points. Points of the interior part of $\overline{\charat}(\pi_1(M),SL_{n+1}(\erre))$ correspond to representations of $\pi_1(M)$ on $SL_{n+1}(\erre)$, or, geometrically, to actions of the group on a projective space of dimension $n$. Points of the boundary correspond instead to representations of the group in $SL_{n+1}(\cappa)$, where $\cappa$ is a non-archimedean field. In \cite{A2} we find a geometric interpretation of these representations, as actions of the group on tropical projective spaces of dimension $n$.

Finally, in section \ref{sez:Teichmuller}, we apply our construction to the Teichm\"uller spaces, as in \cite{MS84}. The boundary constructed in this way is the Thurston boundary, endowed with a natural piecewise linear structure, that is equivalent to the one defined by Thurston. This shows how the piecewise linear structure on the boundary is induced by the semi-algebraic structure on the interior part.

\section{Compactification of semi-algebraic sets}        \label{sez:compactification}

In this section, we apply the Maslov dequantization to a semi-algebraic set, constructing a compactification. More precisely, we construct an inverse systems of logarithmic limit sets, whose inverse limit we use to construct the compactification. This inverse limit is a cone, and it is the image of the extension of the space to a real closed non-archimedean field under a componentwise valuation map.

This construction is a generalization of the Morgan-Shalen compactification (see \cite{MS84}). They worked in the framework of complex algebraic geometry: given a complex variety $V$ and countable family of polynomial functions ${\{f_i\}}_{i \in I}$ containing a set of generators of the ring of coordinates of $V$, they constructed a compactification of $V$ taking the limit points of the ratios of the values ${\left[\log(|f_i(x)|+2)\right]}_{i \in I}$. Here we work directly with real semi-algebraic sets, and this allows us to consider more general families of functions. Given a semi-algebraic set $V$ and a proper family of positive continuous semi-algebraic functions ${\{f_i\}}_{i \in I}$, we can construct a compactification of $V$, in a way that is shown to be equivalent to taking the limit points of the ratios of the values ${\left[\log(f_i(x))\right]}_{i \in I}$. The properties of the Morgan-Shalen compactification can be extended to this more general setting. This generalization is important for the construction of the compactification for the spaces of convex projective structures on a closed $n$-manifold, as the family of functions we consider there is not a family of polynomial functions.

A compactification construction related to this one is the one in \cite{Te}, called tropical compactification. The two compactifications are different, for example the tropical compactification is a complex variety and the boundary is a divisor, while the compactifications we present here don't have a structure of algebraic variety, here the objects we put on the boundary have a polyhedral nature. Anyway the two notions are related, as we show in subsection \ref{subsez:injective}.

\subsection{Embedding-dependent compactification}

Let $V \subset {(\erre_{>0})}^n$ be a closed semi-algebraic set. We can construct a compactification for $V$, using its logarithmic limit set $\ameba_0(V) \subset \erre^n$, a polyhedral fan.

This fan represents the behavior at infinity of the amoeba, hence it can be used to compactify it. We take the quotient by the spherical equivalence relation
$$x \sim y \Leftrightarrow \exists \lambda >0: x = \lambda y$$
and we get the boundary 
$$\partial V = (\ameba_{0}(V)\setminus\{0\})/\sim \ \ \subset S^{n-1}$$

Now we glue $\partial V$ to $V$ at infinity in the following way. We compactify $\erre^n$ by adding the sphere at infinity:
$$\erre^n \ni x \freccia \frac{x}{\sqrt{1+{\|x\|}^2}} \in D^n\hspace{2cm}D^n \approx \erre^n \cup S^{n-1}$$

Given a $t_0 < 1$, we will denote by $\overline{V}$ the closure of $\ameba_{t_0}(V)$ in $D^n$. Then 
$$\overline{V} = \ameba_{t_0}(V) \cup \partial V$$

\begin{proposition}
The map $\Log_{\left(\frac{1}{t_0}\right)} : V \freccia \overline{V}$ is a compactification of $V$. The compactification does not depend on the choice of $t_0$.
\end{proposition}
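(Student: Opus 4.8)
### Proof proposal

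\textbf{The plan is to} unwind the definitions and show that $\Log_{(1/t_0)}$, viewed as a map from $V$ into $D^n$, is a homeomorphism onto its image $\ameba_{t_0}(V)$, that this image is open and dense in $\overline V$, and that $\overline V$ is compact; together these say that $\overline V$ is a compactification of $V$. The final independence claim will follow by exhibiting, for two choices $t_0, t_1 < 1$, a homeomorphism of $D^n$ carrying one closure onto the other and commuting (up to the identity on $V$) with the two log maps.

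\textbf{First I would} recall that $\Log_{(1/t_0)}$ is the componentwise map $x \mapsto (\log_{1/t_0} x_1, \dots, \log_{1/t_0} x_n)$, which is a semi-algebraic homeomorphism from ${(\erre_{>0})}^n$ onto $\erre^n$ with continuous inverse $\Exp$; hence its restriction to the closed semi-algebraic set $V$ is a homeomorphism onto $\ameba_{t_0}(V)$, and composing with the embedding $\erre^n \hookrightarrow D^n$ realizes $V$ as a subspace of $D^n$. \textbf{Next} I would check that $\overline V$, being by definition the closure of a subset of the compact space $D^n$, is compact, and that $\ameba_{t_0}(V) = \overline V \setminus \partial V$ is dense in it by construction. \textbf{Then} I would verify the asserted decomposition $\overline V = \ameba_{t_0}(V) \cup \partial V$: the nontrivial inclusion is that every limit point of $\ameba_{t_0}(V)$ lying on the sphere at infinity $S^{n-1}$ belongs to $\partial V = (\ameba_0(V)\setminus\{0\})/\!\sim$. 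A sequence in $\ameba_{t_0}(V)$ converging to a point of $S^{n-1}$ has, under the identification $D^n \approx \erre^n \cup S^{n-1}$, norm tending to infinity, and its direction converges to the limiting point of the sphere; by the definition of the logarithmic limit set $\ameba_0(V)$ as the set of limit directions of unbounded sequences of points of $\ameba_t(V)$ (together with the scaling behavior in the base, which rescales amoebas linearly and so does not affect the set of limit directions), that limiting direction lies in $\ameba_0(V)\setminus\{0\}$, i.e.\ its class lies in $\partial V$. Conversely each point of $\partial V$ is such a limit, so the closure is exactly $\ameba_{t_0}(V)\cup\partial V$.

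\textbf{For the independence statement}, the key observation is that changing the base from $1/t_0$ to $1/t_1$ replaces $\Log_{(1/t_0)}$ by $\lambda \cdot \Log_{(1/t_0)}$ with $\lambda = \log(1/t_0)/\log(1/t_1) > 0$, so $\ameba_{t_1}(V) = \lambda\, \ameba_{t_0}(V)$. The radial scaling $x \mapsto \lambda x$ of $\erre^n$ extends to a self-homeomorphism of $D^n$ fixing $S^{n-1}$ pointwise (it commutes with the radial compactification map, up to reparametrizing the radial coordinate by the homeomorphism $r \mapsto r/\sqrt{1+r^2}$), hence it carries $\overline{V}_{t_0} = \ameba_{t_0}(V)\cup\partial V$ homeomorphically onto $\overline{V}_{t_1} = \ameba_{t_1}(V)\cup\partial V$ while inducing the identity on the common boundary $\partial V$ and intertwining the two log maps on $V$. \textbf{The main obstacle} I anticipate is the careful handling of step three, the identification of the sphere-at-infinity part of the closure with $\partial V$: this is where the genuine content lives, and it requires the precise definition of the logarithmic limit set and its scaling behavior in the base (from \cite{A1}), rather than the formal topological bookkeeping that surrounds it.
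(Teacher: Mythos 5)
Your proposal is correct and follows essentially the same route as the paper: $\Log_{\left(\frac{1}{t_0}\right)}$ is a homeomorphism onto the closed set $\ameba_{t_0}(V)$, so the closure in $D^n$ adds only points of $S^{n-1}$, which are identified with $\partial V$ via the limit-direction description of $\ameba_0(V)$ from \cite{A1}. You actually give more detail than the paper's own proof, which omits the boundary identification (it is asserted just before the proposition) and says nothing explicit about independence of $t_0$, for which your radial-scaling self-homeomorphism of $D^n$ fixing $S^{n-1}$ pointwise is the natural argument.
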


\begin{proof}
The map $\Log_{\left(\frac{1}{t_0}\right)}$ is a homeomorphism between $V$ and $\ameba_{t_0}(V)$. The set $\ameba_{t_0}(V)$ is closed in $\erre^n$, hence its closure is the union of $\ameba_{t_0}(V)$ and a subset of $S^{n-1}$. As $S^{n-1}$ is closed in $D^n$, $\ameba_{t_0}(V)$ is open and dense in $\overline{V}$. 
\end{proof}

Note that the logarithmic limit set $\ameba_0(V)$ is the cone over the boundary, and for this reason it will sometimes be denoted by $C(\partial V)$.

\subsection{Embedding-independent compactification}

This construction can be generalized in a way that does not depend on the immersion of $V$ in $\erre^n$. Let $V \subset \erre^n$ be a semi-algebraic set. A finite family of continuous semi-algebraic functions $\famil = \{f_1, \dots, f_m\}$, with $f_i:V \freccia \erre_{>0}$, is called a \nuovo{proper family} if the map
$$E_\famil : V \ni x \freccia (f_1(x), \dots, f_m(x)) \in {(\erre_{>0})}^m$$
is proper. In this case the map $L_\famil = \Log_{\left(\frac{1}{t_0}\right)} \circ E_\famil$ is also proper.

The image $E_\famil(V) \subset {(\erre_{>0})}^n$ is a closed semi-algebraic subset, and we can compactify it as before, by $\overline{E_\famil(V)} = \ameba_{t_0}(E_\famil(V)) \cup \partial E_\famil(V)$.

Let $\hat{V} = V \cup \{\infty\}$ denote the Alexandrov compactification of $V$. Consider the map
$$i:V \ni x \freccia (x, L_\famil(x)) \in \hat{V} \times \overline{E_\famil(V)}$$
and let $\overline{V}_\famil$ be the closure of the image $i(V)$ in $\hat{V} \times \overline{E_\famil(V)}$.

\begin{proposition} 
The map $i:V \freccia \overline{V}_\famil$ is a compactification of $V$. The boundary $\partial_\famil V = \overline{V}_\famil \setminus i(V)$ is the set $\partial E_\famil(V)$.
\end{proposition}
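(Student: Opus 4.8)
The plan is to show that $i$ is a proper topological embedding with dense image in its closure, and then to identify the boundary. First I would check that $i$ is a homeomorphism onto its image: $i$ is continuous because $x \mapsto x$ (into $\hat V$) and $L_\famil = \Log_{(1/t_0)} \circ E_\famil$ are both continuous, and it is injective because the first coordinate already recovers $x$; the inverse is continuous because it is just the restriction of the projection $\hat V \times \overline{E_\famil(V)} \to \hat V$ followed by the inclusion $V \hookrightarrow \hat V$ being an embedding. So $V \cong i(V)$, and since $\overline{V}_\famil$ is by definition the closure of $i(V)$, the image $i(V)$ is dense in the compact space $\overline{V}_\famil$; thus $i$ is a compactification as soon as $\overline{V}_\famil$ is compact, which it is, being a closed subset of the compact space $\hat V \times \overline{E_\famil(V)}$ (here $\hat V$ is compact by construction as the Alexandrov compactification, and $\overline{E_\famil(V)}$ is compact as a closed subset of the disc $D^m$).

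The substantive part is the identification $\partial_\famil V = \partial E_\famil(V)$, i.e.\ describing the closure points that are not in $i(V)$. A point of $\overline{V}_\famil$ that lies in the closure of $i(V)$ but not in $i(V)$ is a limit $\lim_k i(x_k) = (p, q)$ with $(p,q) \notin i(V)$. The key observation is that $L_\famil$ is proper, so if the sequence $x_k$ stays in a compact subset of $V$, a subsequence converges to some $x \in V$ and the limit is $i(x) \in i(V)$, a contradiction; hence $x_k \to \infty$ in $\hat V$, forcing $p = \infty$. Then I would argue that the assignment $(\infty, q) \mapsto q$ gives the desired bijection: on one hand, if $\lim_k i(x_k) = (\infty, q)$ then $q = \lim_k L_\famil(x_k) \in \overline{E_\famil(V)}$, and since $L_\famil$ is proper and the $x_k$ escape every compact set, the values $L_\famil(x_k)$ must escape every compact subset of $\ameba_{t_0}(E_\famil(V))$, so $q$ lies in the boundary part $\partial E_\famil(V) \subset S^{m-1}$; conversely, any $q \in \partial E_\famil(V)$ is a limit of points $L_\famil(x_k)$ (using that $\ameba_{t_0}(E_\famil(V)) = L_\famil(V)$ is dense in $\overline{E_\famil(V)}$), and along such a sequence properness of $L_\famil$ again forces $x_k \to \infty$, so $i(x_k) \to (\infty, q)$, which therefore lies in $\overline{V}_\famil$.

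Assembling these, $\overline{V}_\famil = i(V) \sqcup (\{\infty\} \times \partial E_\famil(V))$ with the second piece canonically homeomorphic to $\partial E_\famil(V)$, which is exactly the claimed description of the boundary. The main obstacle I anticipate is the careful handling of properness at the two places above: one must be sure that a sequence $i(x_k)$ converging in $\hat V \times \overline{E_\famil(V)}$ to a boundary point genuinely has $x_k \to \infty$ and that the $\overline{E_\famil(V)}$-coordinate of the limit genuinely lands in $\partial E_\famil(V)$ rather than back in $\ameba_{t_0}(E_\famil(V))$ — both of which are exactly where the hypothesis that $\famil$ is a proper family (equivalently $E_\famil$, hence $L_\famil$, is proper) gets used. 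Everything else is routine point-set topology on products of compact Hausdorff spaces.
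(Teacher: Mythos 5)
Your proposal is correct and follows essentially the same route as the paper: identify $i(V)$ with $V$ via the first projection, note $\overline{V}_\famil$ is compact as a closed subset of a compact product, and use properness of $E_\famil$ (equivalently $L_\famil$) to identify $p_1^{-1}(\infty)\cap\overline{V}_\famil$ with $\partial E_\famil(V)$ via the second projection. You merely spell out in sequence language the two inclusions that the paper compresses into the sentence ``as the map $E_\famil$ is proper, we have $\partial_\famil V = \partial E_\famil(V)$''.
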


\begin{proof}
The image of $i$ is homeomorphic to $V$ (the inverse being the projection $p_1$ on the first factor). The space $\overline{V}_\famil$ is compact as it is closed in a compact. The complement of $i(V)$ in $\overline{V}_\famil$ is the set $p_1^{-1}(\infty) \cap \overline{V}_\famil$, a closed set. Hence the map $i:V \freccia \overline{V}_\famil$ is a topological immersion of $V$ in a open dense subset of a compact space, i.e. a compactification.

The boundary is $\partial_\famil V = p_1^{-1}(\infty) \cap \overline{V}_\famil$. The projection $p_2$ on the second factor identifies $\partial_\famil V$ with a subset of $\overline{E_\famil(V)}$. As the map $E_\famil$ is proper, we have $\partial_\famil V = \partial E_\famil(V)$. 
\end{proof}

The cone over the boundary will be denoted by $C(\partial_\famil V) = \ameba_0( E_\famil(V) )$.

\subsection{Limit compactification}

Here we present a further generalization of the construction of the compactification. This generalization is needed if we want to extend the action of a group on the semi-algebraic set to an action on the compactification, as in subsection \ref{subsez:action extension}.

Let $V \subset \erre^n$ be a semi-algebraic set. A (possibly infinite) family of continuous semi-algebraic functions $\gfamil = {\{f_i\}}_{i \in I}$, with $f_i:V \freccia \erre_{>0}$, is called a \nuovo{proper family} if there exist a finite subfamily $\famil \subset \gfamil$ that is proper.

Suppose that $\gfamil$ is proper. Let
$$P_\gfamil = \{ \famil \subset \gfamil \ |\ \famil \mbox{ is proper } \}$$
a non-empty set partially ordered by inclusion. If $\famil \subset \famil'$ we denote by $\pi_{\famil',\famil}$ the projection
$$\pi_{\famil',\famil}:\erre^{\famil'} \freccia \erre^{\famil}$$
on the coordinates corresponding to $\famil$. This projection restricts to a surjective map
$$\pi_{\famil',\famil | \ameba_{t_0}( E_{\famil'}(V) )}:\ameba_{t_0}( E_{\famil'}(V) ) \freccia \ameba_{t_0}( E_\famil(V) )$$
By \cite[prop. 4.7]{A1}, the restriction to the logarithmic limit sets is also surjective:
$$\pi_{\famil',\famil | \ameba_{0}( E_{\famil'}(V) )}:\ameba_{0}( E_{\famil'}(V) ) \freccia \ameba_{0}( E_\famil(V) )$$

\begin{proposition}
Let $\famil, \famil' \in P_\gfamil$. If $\famil \subset \famil'$, the map $\pi_{\famil',\famil | \ameba_{0}( E_{\famil'}(V))}$ induces a map 
$$\partial \pi_{\famil',\famil}: \partial_{\famil'} V \freccia \partial_\famil V$$
\end{proposition}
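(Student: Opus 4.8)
The plan is to show that the projection $\pi_{\famil',\famil}$ between the logarithmic limit sets descends to the spherical quotients, and that the resulting map on boundaries is compatible with the identifications $\partial_{\famil''} V = \partial E_{\famil''}(V) = (\ameba_0(E_{\famil''}(V))\setminus\{0\})/\!\sim$ established in the previous proposition. First I would recall that $\ameba_0(E_{\famil'}(V))$ and $\ameba_0(E_\famil(V))$ are polyhedral cones (from \cite{A1}), so they are invariant under positive scaling, and that $\pi_{\famil',\famil}$ is a coordinate projection, hence linear; therefore it commutes with scaling, i.e. $\pi_{\famil',\famil}(\lambda x) = \lambda\, \pi_{\famil',\famil}(x)$ for $\lambda > 0$. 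Combined with the surjectivity statement $\pi_{\famil',\famil|\ameba_0(E_{\famil'}(V))}:\ameba_0(E_{\famil'}(V)) \to \ameba_0(E_\famil(V))$ quoted from \cite[prop. 4.7]{A1}, this means the map respects the equivalence relation $x\sim y \Leftrightarrow \exists\lambda>0: x=\lambda y$, \emph{provided} it does not send nonzero points to $0$.

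That caveat is exactly the point where I expect the real work to be. I would need to check that $\pi_{\famil',\famil}$ maps $\ameba_0(E_{\famil'}(V))\setminus\{0\}$ into $\ameba_0(E_\famil(V))\setminus\{0\}$, i.e. that no nonzero ray of the larger cone collapses to the origin. The cleanest way is to use surjectivity together with a dimension or fan-structure argument: since the restricted projection is onto $\ameba_0(E_\famil(V))$ and both are fans, one can stratify by cones and observe that the preimage of $\{0\}$ is a subfan; if some nonzero cone mapped to $0$ it would have to lie in the kernel of the linear projection, but the kernel of a coordinate projection consists of vectors supported on the discarded coordinates, and I would argue such vectors cannot lie in $\ameba_0(E_{\famil'}(V))$ because the extra functions $f_i$ ($i \in \famil'\setminus\famil$) are positive semi-algebraic and the finite subfamily $\famil$ is already proper — properness of $E_\famil$ forces the $\Log$ of the $\famil$-coordinates to already detect all directions going to infinity, so a limiting direction with all $\famil$-coordinates zero is impossible. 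Alternatively, and perhaps more simply, one invokes that $\partial_{\famil'}V = \partial E_{\famil'}(V) \subset S^{\famil'-1}$ is compact and the projection $\erre^{\famil'}\to\erre^\famil$ restricted to the cone is a proper map of fans, so $0$ has a closed preimage that meets the unit sphere in $\emptyset$ unless forced otherwise; I would spell out whichever of these is shortest.

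Granting that, the construction of $\partial\pi_{\famil',\famil}$ is then immediate: for $[x] \in \partial_{\famil'} V$ with $x \in \ameba_0(E_{\famil'}(V))\setminus\{0\}$, set $\partial\pi_{\famil',\famil}([x]) = [\pi_{\famil',\famil}(x)]$. This is well-defined by the scaling-compatibility above, it lands in $\partial_\famil V$ by the non-collapsing claim, and it is continuous because $\pi_{\famil',\famil}$ is continuous (linear) and the quotient maps to the spherical quotients are continuous open maps. Finally I would remark that surjectivity of $\partial\pi_{\famil',\famil}$ follows at once from the surjectivity of $\pi_{\famil',\famil|\ameba_0(E_{\famil'}(V))}$. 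The only genuine obstacle is the non-collapsing of nonzero cones under the projection; everything else is formal manipulation of cones, quotients, and the already-cited properness statements.
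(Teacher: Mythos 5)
You have set up the problem the same way the paper does: everything reduces to showing that the restriction of the coordinate projection to $\ameba_0(E_{\famil'}(V))$ sends no nonzero point to $0$, and you correctly sense that properness of $\famil$ is the reason. But at exactly that point your argument asserts the conclusion rather than proving it. A nonzero $x \in \ameba_0(E_{\famil'}(V))$ with $x_f = 0$ for all $f \in \famil$ is a \emph{direction} at infinity: it is witnessed by a sequence $(x_n) \subset V$ escaping every compact set, along which the coordinates $\log_{(1/t_0)} f(x_n)$, $f \in \famil$, grow \emph{sublinearly} compared to the dominant coordinates --- but sublinear growth is perfectly compatible with these coordinates being unbounded, and properness of $E_\famil$ (or $L_\famil$) only says that an escaping sequence has unbounded image under $L_\famil$. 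So the statement ``properness forces the $\famil$-coordinates to detect all directions going to infinity, hence a limiting direction with all $\famil$-coordinates zero is impossible'' does not follow from what you have written; it is precisely the claim to be established. Likewise, the ``kernel of the coordinate projection'' remark is a reformulation, not a proof, and the second alternative (compactness of $\partial_{\famil'}V$ in the sphere, ``unless forced otherwise'') gives nothing: compactness of the boundary does not prevent a nonzero ray from collapsing to $0$ under a linear projection.

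The missing ingredient, which is where the paper does its only real work, is a refinement of the description of logarithmic limit sets of semi-algebraic sets (cited there from \cite[sec. 3]{A1}): if $x_f = 0$ for some coordinate $f$, one may \emph{choose} the sequence $(x_n) \tende [x]$ in $\overline{V}_{\famil'}$ so that $\log_{(1/t_0)} f(x_n)$ is actually \emph{bounded} for every $f \in \famil' $ with $x_f = 0$. With that choice, since $(x_n)$ leaves every compact subset of $V$ and $L_\famil$ is proper, some coordinate $\log_{(1/t_0)} f(x_n)$, $f \in \famil$, must be unbounded, forcing $x_f \neq 0$ and hence $\pi_{\famil',\famil}(x) \neq 0$. (An equivalent fix is to argue through the non-archimedean description of $\ameba_0$ as in \cite[cor. 4.5]{A1}: a point with all $\famil$-coordinates of valuation zero but some $\famil'$-coordinate infinite would contradict the first-order statement expressing properness of $E_\famil$.) Your fan-stratification and scaling remarks are fine but formal; without the boundedness refinement (or a substitute for it) the central step of the proof is not there.
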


\begin{proof}
We have to prove that
$${\left(\pi_{\famil',\famil | \ameba_{0}( E_{\famil'}(V) )}\right)}^{-1}(0) = \{0\}$$
Let $x \in \ameba_{0}( E_{\famil'}(V) ) \setminus \{0\}$, hence the set $\widetilde{\famil'} = \{ f \in \famil' \ |\ x_f \neq 0\}$ is not empty. The image of $x$ in the quotient $\partial_{\famil'}(V)$ is the point $[x]$. As $\overline{V}_{\famil'}$ is a compactification, there exists a sequence ${(x_n)} \subset V$ such that $(x_n) \tende [x]$ in $\overline{V}_{\famil'}$. As in \cite[sec. 3]{A1} we can choose $(x_n)$ such that, for all function $f \in \famil' \setminus \widetilde{\famil'}$, the sequence $\log_{\left(\frac{1}{t_0}\right)}(f(x_n))$ is bounded. As the map $L_\famil$ is proper, the sequence $(L_\famil(x_n))$ is not bounded, hence there is a function $f \in \famil$ such that $\log_{\left(\frac{1}{t_0}\right)}(f(x_n))$ is unbounded. Hence the corresponding coordinate $x_f \neq 0$, and $\pi_{\famil',\famil}(x) \neq 0$.
\end{proof}

The maps $\pi_{\famil',\famil}$ and $\partial \pi_{\famil',\famil}$ define inverse systems ${\{\ameba_{t_0}( E_\famil(V) )\}}_{\famil \in P_\gfamil}$, ${\{\ameba_{0}( E_\famil(V) )\}}_{\famil \in P_\gfamil}$ and ${\{\partial_\famil V \}}_{\famil \in P_\gfamil}$. Consider the inverse limit
$$ L = \lim_{\longleftarrow} \ameba_{t_0}( E_\famil(V) )$$
we will denote by $\pi_{\gfamil,\famil} : L \freccia \ameba_{t_0}( E_\famil(V) )$ the canonical projection. By the explicit description of the inverse limit, $L$ is a closed subset of the product:
$$\left\{ {(x_\famil)} \in \prod_{\famil \in P_\gfamil} \ameba_{t_0}( E_\famil(V) ) \ |\ \forall \famil \subset \famil' : \pi_{\famil',\famil}(x_{\famil'}) = x_\famil \right\} $$

For every $x \in L$, and every $f \in \gfamil$, let $\famil$ be a proper finite family containing $f$. Then the value of the $f$-coordinate of the point $\pi_{\gfamil,\famil}(x)$ does not depend on the choice of the family $\famil$. This value will be denoted by $x_f$. The map
$$L \ni x \freccia {(x_f)}_{f \in \gfamil} \in \erre^\gfamil$$
identifies $L$ with a subset of $\erre^{\gfamil}$.

The system of maps $L_\famil : V \freccia \ameba_{t_0}( E_\famil(V) )$, defined for every $\famil \in P_\gfamil$, induces by the universal property a well defined map $L_{\gfamil}: V \freccia L$. 

\begin{proposition}
The map $L_{\gfamil}$ is surjective and proper, and it can be identified with the map  
$$V \ni x \freccia {\left(\log_{\left(\frac{1}{t_0}\right)}(f(x))\right)}_{f \in \gfamil} \in \erre^{\gfamil} $$
\end{proposition}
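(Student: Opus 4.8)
The plan is to unwind the definitions on both sides and check that the two maps agree. First I would observe that the map $V \to L$ obtained from the universal property of the inverse limit $L = \lim_{\longleftarrow} \ameba_{t_0}(E_\famil(V))$ sends $x$ to the tuple ${(L_\famil(x))}_{\famil \in P_\gfamil}$; this is compatible with the transition maps $\pi_{\famil',\famil}$ precisely because $\pi_{\famil',\famil} \circ L_{\famil'} = L_\famil$ by construction (each $L_\famil = \Log_{(1/t_0)} \circ E_\famil$, and $E_\famil$ is just the restriction of $E_{\famil'}$ to the coordinates of $\famil$). Then, under the identification of $L$ with a subset of $\erre^\gfamil$ described just above the statement — where the $f$-coordinate of $x \in L$ is read off from any proper finite family containing $f$ — the $f$-coordinate of $L_\gfamil(x)$ is the $f$-component of $L_\famil(x) = \Log_{(1/t_0)}(E_\famil(x))$, which is exactly $\log_{(1/t_0)}(f(x))$. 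This gives the claimed explicit description.

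For surjectivity, I would use that each $L_\famil : V \to \ameba_{t_0}(E_\famil(V))$ is surjective: indeed $L_\famil = \Log_{(1/t_0)} \circ E_\famil$, and $\ameba_{t_0}(E_\famil(V))$ is by definition the image $\Log_{(1/t_0)}(E_\famil(V))$ (the amoeba at scale $t_0$), so $L_\famil$ is onto. Surjectivity of a map into an inverse limit does not follow automatically from surjectivity onto each factor, so here I would invoke the Mittag-Leffler / compactness argument: each $\ameba_{t_0}(E_\famil(V))$ is a closed subset of $\erre^\famil$, but more usefully one can work with the compactified version or use that the system is an inverse system of (images of) a single space $V$ with compatible surjections. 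Concretely, given $y = {(y_\famil)} \in L$, the preimages $L_\famil^{-1}(y_\famil) \subset V$ form a decreasing (under refinement of $\famil$) family of nonempty sets; to extract a common point I would pass to the properness already available — $L_\famil$ is proper for $\famil$ proper — so each $L_\famil^{-1}(y_\famil)$ is compact and nonempty, and compatibility of the $y_\famil$ forces these compact sets to be nested along any chain, hence have the finite intersection property; since $P_\gfamil$ is directed, the whole family has nonempty intersection, and any point in it maps to $y$.

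Properness of $L_\gfamil$ I would deduce from properness of a single $L_\famil$ for a fixed proper finite $\famil$: the canonical projection $\pi_{\gfamil,\famil} : L \to \ameba_{t_0}(E_\famil(V))$ satisfies $\pi_{\gfamil,\famil} \circ L_\gfamil = L_\famil$, and $L_\famil$ is proper (stated in the excerpt: ``the map $L_\famil = \Log_{(1/t_0)} \circ E_\famil$ is also proper''); since $\pi_{\gfamil,\famil}$ is continuous, the preimage under $L_\gfamil$ of a compact $K \subset L$ is contained in $L_\famil^{-1}(\pi_{\gfamil,\famil}(K))$, which is compact, and it is closed (as $L_\gfamil$ is continuous and $K$ closed), hence compact.

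The main obstacle is the surjectivity step: getting a point of $V$ mapping to a prescribed element of the inverse limit requires more than factorwise surjectivity, and the clean way through is exactly to exploit that all the $L_\famil^{-1}(y_\famil)$ are compact (via properness of the proper finite subfamilies) and nested along directed chains in $P_\gfamil$, so that the finite intersection property applies. Everything else is a routine check that the universal-property map and the coordinatewise $\log$ map are literally the same function under the stated identification of $L$ with a subset of $\erre^\gfamil$.
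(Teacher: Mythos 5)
Your argument is correct, but for the surjectivity step you take a genuinely different route from the paper. The paper proves surjectivity by transfinite induction on the cardinality of $\gfamil$: it invokes Zermelo's theorem to choose a cardinal well-ordering of $\gfamil$ having a finite proper family as an initial segment, and then intersects the nested compact preimages $L_\famil^{-1}(\pi_{\gfamil,\famil}(y))$ along the chain of initial segments, the compactness and nonemptiness of these preimages being supplied by the inductive hypothesis applied to proper subfamilies of smaller cardinality. You instead work directly with the directed poset of \emph{finite} proper subfamilies: properness and surjectivity of each $L_\famil$ give nonempty compact fibers $L_\famil^{-1}(y_\famil)$, directedness (the union of two finite proper families is finite and proper) gives the finite intersection property, and compactness gives a common point. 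This is shorter and avoids the well-ordering/transfinite machinery entirely. The one thing the paper's inductive formulation handles automatically, and that you use implicitly, is that $P_\gfamil$ as defined also contains \emph{infinite} proper subfamilies, so to conclude that your common point $x$ satisfies $L_\gfamil(x)=y$ (not just agreement on finite components) you need that every element of $L$ is determined by its coordinates $x_f$, $f\in\gfamil$; this is exactly the identification of $L$ with a subset of $\erre^{\gfamil}$ that the paper sets up immediately before the proposition, so your appeal to it is legitimate, but it deserves an explicit sentence. Your treatment of the explicit description (unwinding the universal property) and of properness (factoring through a single finite proper $\famil$ and using that a closed subset of a compact set is compact) matches the paper's intent, which disposes of both points in one line.
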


\begin{proof}
We proceed by transfinite induction on the cardinality of $\gfamil$. If $\gfamil$ is finite, the statement is trivial, as in this case $\gfamil$ is the maximum of $P_\gfamil$, the inverse limit is simply $L = \ameba_{t_0}(E_\gfamil(V))$ and we now that for $\famil \in P_\gfamil$ the maps $L_\famil : V \freccia \ameba_{t_0}( E_\famil(V) )$ are surjective and proper.

Now suppose, by induction, that the statement is true for all proper families with cardinality less than the cardinality of $\gfamil$. We denote by $P_\gfamil'$ the set of all proper subfamilies of $\gfamil$ with smaller cardinality. Let $y \in L$. By the inductive hypothesis for every $\famil \in P_\gfamil'$, the map $L_\famil : V \freccia \ameba_{t_0}( E_\famil(V) )$ is surjective and proper, hence the inverse image $L_\famil^{-1}(\pi_{\gfamil,\famil}(y))$ is a compact and non-empty subset of $V$.

By the Zermelo theorem every set has a cardinal well ordering, i.e. a well ordering such that all initial segments have cardinality less than the cardinality of the set. Moreover we can choose a finite set that will be an initial segment for the well ordering. We choose a cardinal well ordering $\prec$ of $\gfamil$ such that a finite proper family $\overline{\famil}$ is an initial segment. Consider the set $Q_\gfamil$ of all initial segments of $\gfamil$ containing $\overline{\famil}$.

We have $Q_\gfamil \subset P_\gfamil'$, hence for every $\famil \in Q_\gfamil$ the subset $L_\famil^{-1}(\pi_{\gfamil,\famil}(y)) \subset V$ is compact and non-empty, and if $\famil \subset \famil'$, then $L_\famil^{-1}(\pi_{\gfamil,\famil}(y)) \subset L_{\famil'}^{-1}(\pi_{\gfamil,{\famil'}}(y))$. The sets $L_\famil^{-1}(\pi_{\gfamil,\famil}(y))$ are nested compact subsets of $V$, hence their intersection is non-empty:
$$\bigcap_{\famil \in Q_\gfamil} L_\famil^{-1}(\pi_{\gfamil,\famil}(y)) \neq \emptyset$$
If $x$ is a point in this intersection, then $L_\gfamil(x) = y$. The fact that the map $L_\gfamil$ can be identified with the map ${\left(\log_{\left(\frac{1}{t_0}\right)}(f(x))\right)}_{f \in \gfamil}$ is clear, and this implies that the map is proper.
\end{proof}

As the map $L_\gfamil$ is surjective, in the following we will denote $L$ by $L_{\gfamil}(V)$. Now consider the inverse limit
$$M =  \lim_{\longleftarrow} \overline{E_\famil(V)} = \lim_{\longleftarrow} \ameba_{t_0}( E_\famil(V) ) \cup \partial_\famil V$$
The space $M$ is compact, as it is an inverse limit of compact spaces, and we will use the map $L_\gfamil: V \freccia M$ to define a compactification, as in the previous subsection.

Consider the map
$$i:V \ni x \freccia (x, L_{\gfamil}(x)) \in \hat{V} \times M$$
Let $\overline{V}_{\gfamil}$ be the closure of the image $i(V)$ in $\hat{V} \times M$.

\begin{proposition} 
The map $i:V \freccia \overline{V}_{\gfamil}$ is a compactification of $V$. The boundary $\partial_{\gfamil} V = \overline{V}_\gfamil \setminus i(V)$ is the set $\displaystyle \lim_{\longleftarrow} \partial_\famil V$.
\end{proposition}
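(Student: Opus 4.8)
The plan is to follow the proof of the previous proposition, with $\overline{E_\famil(V)}$ replaced by the inverse limit $M$ and $L_\famil$ by $L_\gfamil$. First I would check that $i$ is a topological embedding of $V$ onto an open dense subset of $\overline{V}_\gfamil$: the first projection $p_1:\hat{V}\times M\freccia\hat{V}$ restricts to a continuous inverse of $i:V\freccia i(V)$, so $i$ is a homeomorphism onto its image; $\overline{V}_\gfamil$ is compact, being closed in $\hat{V}\times M$, a product of compact spaces; and $i(V)$ is dense in its closure by construction. For openness one shows $i(V)=p_1^{-1}(V)\cap\overline{V}_\gfamil$: since $V$ is open in $\hat{V}$, $M$ is Hausdorff, and $L_\gfamil:V\freccia M$ is continuous, the graph of $L_\gfamil$ is closed in $V\times M$, so any point of $\overline{V}_\gfamil$ with first coordinate in $V$ has the form $(x,L_\gfamil(x))$. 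Hence $i$ is a compactification and $\partial_\gfamil V=p_1^{-1}(\infty)\cap\overline{V}_\gfamil$ is closed; the projection $p_2$ is injective on $\partial_\gfamil V$ (the first coordinate being constantly $\infty$), so it identifies $\partial_\gfamil V$ with a closed subset of $M$, and it remains to identify that subset with $\lim_{\longleftarrow}\partial_\famil V$.

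Write $L=\lim_{\longleftarrow}\ameba_{t_0}(E_\famil(V))$; by the preceding proposition $L_\gfamil$ maps $V$ onto $L$, and the projections $M\freccia\overline{E_\famil(V)}$ restrict to surjections $L\freccia\ameba_{t_0}(E_\famil(V))$. I would first show $M=L\sqcup\lim_{\longleftarrow}\partial_\famil V$, with the right-hand set viewed inside $M$ via the inclusions $\partial_\famil V=\partial E_\famil(V)\subset\overline{E_\famil(V)}$: the poset $P_\gfamil$ is directed, since the union of two proper families is proper, and the bonding maps respect the splitting $\overline{E_\famil(V)}=\ameba_{t_0}(E_\famil(V))\sqcup\partial_\famil V$, because $\pi_{\famil',\famil}$ carries $\ameba_{t_0}(E_{\famil'}(V))$ into $\ameba_{t_0}(E_\famil(V))$ and, by the previous proposition, $\partial_{\famil'}V$ into $\partial_\famil V$; hence a tuple of $M$ with one coordinate finite and another at infinity would, at an index above both, give a contradiction. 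Since $\ameba_{t_0}(E_\famil(V))$ is dense in $\overline{E_\famil(V)}$ and the projections $L\freccia\ameba_{t_0}(E_\famil(V))$ are onto, $L$ is dense in $M$. With these facts the boundary is computed as in the finite case: no point of $\partial_\gfamil V$ can project into $L$, because a net $L_\gfamil(x_n)\tende m\in L=L_\gfamil(V)$ would, by properness of the finite maps $L_\famil$, force the $x_n$ into a compact subset of $V$, contradicting $x_n\tende\infty$; conversely, given $m\in M\setminus L$, choose (by density) a net $x_\alpha\in V$ with $L_\gfamil(x_\alpha)\tende m$, pass to a subnet of $i(x_\alpha)$ converging in the compact $\overline{V}_\gfamil$ to $(x^*,m)$, and observe that $x^*\neq\infty$ would put $m=L_\gfamil(x^*)$ in $L$. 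Thus $p_2$ identifies $\partial_\gfamil V$ with $M\setminus L=\lim_{\longleftarrow}\partial_\famil V$.

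The bulk of the new content, and the step I expect to be the main obstacle, is the ``splitting plus directedness'' argument showing that an element of the inverse limit $M$ cannot be finite at one level and at infinity at another; once this is in place, the remaining verifications are a routine transcription of the finite-family proof, relying only on the surjectivity and properness of $L_\gfamil$ and $L_\famil$ already established.
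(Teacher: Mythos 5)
Your proof is correct and takes essentially the same route as the paper's (much terser) proof: repeat the finite-family argument for the compactification part, and use properness of the maps $L_\famil$ to identify the boundary with $\lim_{\longleftarrow} \partial_\famil V$. The decomposition $M = L \sqcup \lim_{\longleftarrow} \partial_\famil V$ via directedness of $P_\gfamil$, the density of $L$ in $M$, and the net/compactness argument are precisely the details the paper compresses into the single sentence ``every map $L_\famil$ is proper, hence the boundary is precisely the set $\lim_{\longleftarrow} \partial_\famil V$''.
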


\begin{proof}
As before, $i(V)$ is homeomorphic to $V$. The space $\overline{V}_{\gfamil}$ is compact as it is closed in a compact. The complement of $i(V)$ in $\overline{V}_{\gfamil}$ is the set $p_1^{-1}(\infty) \cap \overline{V}_{\gfamil}$, a closed set. Hence the map $i:V \freccia \overline{V}_{\gfamil}$ is a compactification.

Every map $L_\famil$ is proper, hence the boundary is precisely the set $\lim_{\longleftarrow} \partial_\famil V$.
\end{proof}

The limit $\partial_{\gfamil} V$ is the spherical quotient of the limit
$$C(\partial_\gfamil V) = \lim_{\longleftarrow} C(\partial_\famil V) = \lim_{\longleftarrow} \ameba_0( E_\famil(V) ) $$
More explicitly, $C(\partial_\gfamil V)$ is a closed subset of the product:
$$\left\{ {(x_\famil)} \in \prod_{\famil \in P_\gfamil} \ameba_{0}( E_\famil(V) ) \ |\ \forall \famil \subset \famil' : \pi_{\famil',\famil}(x_{\famil'}) = x_\famil \right\} $$

As before, for every $x \in C(\partial_\gfamil V)$, and every $f \in \gfamil$, let $\famil$ be a proper finite family containing $f$. Then the value of the $f$-coordinate of the point $\pi_{\gfamil,\famil}(x)$ does not depend on the choice of the family $\famil$. This value will be denoted by $x_f$. The map
$$C(\partial_\gfamil V) \ni x \freccia {(x_f)}_{f \in \gfamil} \in \erre^\gfamil$$
identifies $C(\partial_\gfamil V)$ with a closed subset of $\erre^{\gfamil}$.

\section{Properties of the boundary}        \label{sez:properties}

\subsection{Group actions and compactifications}   \label{subsez:action extension}

Let $G$ be a group acting with continuous semi-algebraic maps on a semi-algebraic set $V \subset \erre^n$. Suppose that $\gfamil$ is a (possibly infinite) proper family of functions $V \freccia \erre_{>0}$, and that $\gfamil$ is invariant for the action of $G$.

Then the action of $G$ on $V$ extends continuously to an action on the compactification $\overline{V}_{\gfamil}$.

As $\gfamil$ is invariant for the action of $G$, if we see the limits $L_{\gfamil}(V)$ and $C(\partial_\gfamil V)$ as subsets of $\erre^{\gfamil}$, then $G$ acts on $L_{\gfamil}(V)$ and $C(\partial_\gfamil V)$ by a permutation of the coordinates corresponding to the action on $\gfamil$, and this action induces an action on the spherical quotient of $C(\partial_\gfamil V)$, the boundary $\partial_{\gfamil}$.

Note that the map $L_{\gfamil}:V \freccia L_{\gfamil}(V)$ is equivariant for this action, hence the action of $G$ on $\partial_{\gfamil}$ extends continuously the action of $G$ on $V$.

\subsection{Injective families and the piecewise linear structure}  \label{subsez:injective}

Let $V \subset \erre^n$ be a semi-algebraic set, and let $\gfamil$ be a (possibly infinite) proper family of continuous semi-algebraic functions $V \freccia \erre_{>0}$.

An \nuovo{injective family} is a finite proper subfamily $\famil \subset \gfamil$ such that the canonical surjective map $\partial \pi_\famil: C(\partial_\gfamil V) \freccia C(\partial_{\famil} V)$ is also injective. 

The existence of an injective family is an important tool for us. For $\famil \in P_\gfamil$, the sets $\ameba_{0}( E_\famil(V) ) = C(\partial_\famil V)$ are all polyhedral complexes, by \cite[thm. 3.11]{A1}, and the maps
$$\pi_{\famil',\famil | C(\partial_{\famil'} V)}:C(\partial_{\famil'} V) \freccia C(\partial_\famil V)$$
are surjective piecewise linear maps.

\begin{proposition}
Suppose that $\gfamil$ contains an injective family. Consider the subset $Q_\gfamil \subset P_\gfamil$ of all injective families in $\gfamil$. Each of the maps $C(\partial_\gfamil V) \freccia C(\partial_{\famil} V) \subset \erre^\famil$, for $\famil \in Q_\gfamil$, is a chart for a piecewise linear structure on $C(\partial_\gfamil V)$, and all these charts are compatible, hence they define a canonical piecewise linear structure on $C(\partial_\gfamil V)$. As $C(\partial_\gfamil V)$ is the cone on $\partial_{\gfamil} V$, a piecewise linear structure is also defined on $\partial_{\gfamil} V$.
\end{proposition}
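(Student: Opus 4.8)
The plan is to check that the maps $\partial\pi_\famil \colon C(\partial_\gfamil V) \to C(\partial_\famil V)$, for $\famil$ ranging over injective families, are homeomorphisms onto their images and that the transition maps between any two of them are piecewise linear, so that they form a compatible atlas. First I would observe that for an injective family $\famil$, the map $\partial\pi_\famil$ is by definition a continuous bijection onto $C(\partial_\famil V) = \ameba_0(E_\famil(V))$; since $C(\partial_\gfamil V)$ is compact (it is a closed subset of the inverse limit $\lim_\leftarrow \ameba_0(E_\famil(V))$, which is an inverse limit of compact spaces intersected with spheres, hence compact) and $C(\partial_\famil V)$ is Hausdorff, $\partial\pi_\famil$ is automatically a homeomorphism. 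This already gives that each $C(\partial_\famil V)$, $\famil \in Q_\gfamil$, is a legitimate model space: it is a polyhedral complex by \cite[thm.~3.11]{A1}, hence carries a piecewise linear structure, and a single global chart trivially gives a PL atlas.

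Next I would verify that the charts are mutually compatible. Given two injective families $\famil, \famil' \in Q_\gfamil$, their union $\famil \cup \famil'$ is again a proper family (it contains the proper subfamily $\famil$), and it is injective as well, because the projection $C(\partial_\gfamil V) \to C(\partial_{\famil \cup \famil'} V)$ factors the injective map $C(\partial_\gfamil V) \to C(\partial_\famil V)$ and hence is injective. Thus $\famil \cup \famil' \in Q_\gfamil$, and we have surjective PL maps $\pi_{\famil\cup\famil',\famil}$ and $\pi_{\famil\cup\famil',\famil'}$ on the cones over the boundaries, which by the preceding paragraph are in fact PL homeomorphisms (source and target both being the injective-family models, homeomorphic to $C(\partial_\gfamil V)$). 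The transition map from the $\famil$-chart to the $\famil'$-chart is then $\pi_{\famil\cup\famil',\famil'} \circ (\pi_{\famil\cup\famil',\famil})^{-1}$, a composition of PL homeomorphisms, hence PL. This shows the atlas $\{C(\partial_\famil V)\}_{\famil \in Q_\gfamil}$ is compatible and defines a canonical PL structure on $C(\partial_\gfamil V)$; it is canonical because any two injective families are dominated by a common injective family in $Q_\gfamil$, so the generated PL structure does not depend on which subcollection of injective charts one starts with.

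Finally, since $C(\partial_\gfamil V)$ is the cone over $\partial_\gfamil V$ with respect to the spherical equivalence $x \sim \lambda x$, $\lambda > 0$, and the PL structure just constructed is invariant under the scaling action (each chart $C(\partial_\famil V) = \ameba_0(E_\famil(V))$ is a fan, so scaling is linear in coordinates and the charts are equivariant), the PL structure descends to the spherical quotient $\partial_\gfamil V$: one takes, as charts near a point $[x]$, the images of the injective-family charts restricted to a transversal to the ray through $x$, and compatibility is inherited from the compatibility upstairs. The main obstacle is the very first point — ensuring that an injective family's chart map is a genuine homeomorphism and not merely a continuous bijection; this is handled by the compactness of $C(\partial_\gfamil V)$, which itself rests on the fact (used throughout section~\ref{sez:compactification}) that all the $L_\famil$ are proper and the $\ameba_{t_0}(E_\famil(V)) \cup \partial_\famil V$ are compact, so their inverse limit is compact.
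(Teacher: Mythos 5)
Your compatibility argument coincides with the paper's: you observe that the union $\famil\cup\famil'$ of two injective families is again injective (because $\partial\pi_{\famil}$ factors through it), and that the projections from $C(\partial_{\famil\cup\famil'}V)$ onto $C(\partial_{\famil}V)$ and $C(\partial_{\famil'}V)$ are piecewise linear isomorphisms, so the transition maps are PL; this is exactly the proof given in the paper, which says nothing more.

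There is, however, one genuinely false claim in your auxiliary step: $C(\partial_\gfamil V)$ is not compact. Each $\ameba_0(E_\famil(V))$ is a closed polyhedral cone (a fan), hence unbounded unless it is $\{0\}$, and the inverse limit $C(\partial_\gfamil V)=\lim_{\longleftarrow}\ameba_0(E_\famil(V))$ is likewise noncompact; the compact object is the spherical quotient $\partial_\gfamil V=\lim_{\longleftarrow}\partial_\famil V$, an inverse limit of closed subsets of spheres. So your ``continuous bijection from a compact space to a Hausdorff space'' argument does not apply as written. The point you are after (that for an injective $\famil$ the bijection $\partial\pi_\famil$ is actually a homeomorphism onto $C(\partial_\famil V)$, so that it is a genuine chart) is one the paper leaves implicit, and it can be repaired: the induced map $\partial_\gfamil V \freccia \partial_\famil V$ is a continuous bijection between compact Hausdorff spaces, hence a homeomorphism; then, since all maps are restrictions of linear projections (so commute with the scaling action) and the only preimage of $0$ is $0$, the degree-zero continuous function $x \mapsto |x_f| / \max_{g\in\famil}|x_g|$ descends to the compact space $\partial_\gfamil V$ and is therefore bounded, giving $|x_f|\le C_f\max_{g\in\famil}|x_g|$ on $C(\partial_\gfamil V)$ for every $f\in\gfamil$; this makes $\partial\pi_\famil$ proper, and a proper continuous bijection onto the locally compact Hausdorff space $C(\partial_\famil V)$ is a homeomorphism. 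With that correction, your proof follows the paper's route, and your closing remark on descending the PL structure to the spherical quotient via homogeneity of the charts is fine.
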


\begin{proof}
If $\famil, \famil' \in Q_\gfamil$, then also $\famil \cup \famil' \in Q_\gfamil$. Then the natural projections $C(\partial_{\famil' \cup \famil} V) \freccia C(\partial_{\famil} V)$ and $C(\partial_{\famil' \cup \famil} V) \freccia C(\partial_{\famil'} V)$ are piecewise linear isomorphisms. Hence the charts are compatible.
\end{proof}

For example it is possible to construct a canonical compactification of a complex very affine variety. This construction is closely related to the tropical compactification of \cite{Te}. A \nuovo{very affine variety} $V \subset {(\ci^*)}^n$ is a closed algebraic subset of the complex torus. The identification $\ci = \erre^2$ turns $V$ into a real semi-algebraic subset of $\erre^{2n}$. We denote by $\ci[V]$ the ring of coordinates of $V$, and by $\ci[V]^*$ the group of invertible elements, i.e. the set of polynomials that never vanish on $V$. For every $f \in \ci[V]^*$ we denote by $|f|$ the continuous semi-algebraic function:
$$|f|:V \ni x \freccia |f(x)| \in \erre_{>0} $$
We choose a proper family $\gfamil$ in the following way:
$$\gfamil = \{ |f| \ |\ f \in \ci[V]^* \} $$
As $V$ is an algebraic subset of the torus, the ring of coordinates is generated by invertible elements, for example the coordinate functions $X_1, \dots, X_n$ and their inverses $X_1^{-1}, \dots, X_n^{-1}$. Then the finite family $\{|X_1|, \dots, |X_n|\} \subset \gfamil$ is a proper family, hence also $\gfamil$ is a proper family. This family defines a compactification
$$i:V \freccia \overline{V}_{\gfamil} $$

Let $G$ be the group of all complex polynomial automorphisms of $V$. In particular $G$ acts on the semi-algebraic set $V$ with continuous semi-algebraic maps. This action preserves $\ci[V]^*$, hence it also preserves the family $\gfamil$. Then the action of $G$ on $V$ extends to an action on the compactification $\overline{V}_{\gfamil}$.

The family $\gfamil$ is infinite (and uncountable), yet it is possible to find an injective family. To do this we use the same technique used in \cite{Te} to construct the intrinsic torus. By \cite[Rem. 2.10]{ST}, the group $\ci[V]^* / \ci^*$ is finitely generated.

\begin{proposition}
Let $f_1, \dots, f_m \in \ci[V]^*$ be representatives of generators of the group $\ci[V]^* / \ci^*$. Then the family $\famil = \{|f_1|, \dots, |f_m|\} \subset \gfamil$ is an injective family.
\end{proposition}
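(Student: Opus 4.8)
The plan is to show that the map $\partial\pi_\famil : C(\partial_\gfamil V) \freccia C(\partial_\famil V)$ is injective, i.e.\ that a point of $C(\partial_\gfamil V)$ is determined by its coordinates $x_{|f_1|}, \dots, x_{|f_m|}$. Recall that a point of $C(\partial_\gfamil V)$ records, for every $g \in \ci[V]^*$, a real number $x_{|g|}$, and that these numbers arise as limits of normalized values $\log_{(1/t_0)}|g(x_n)|$ along suitable sequences $(x_n) \subset V$. The key point is that the assignment $g \mapsto x_{|g|}$ is compatible with the group structure of $\ci[V]^*$: since $|gh| = |g|\cdot|h|$ as functions on $V$, and $\log$ turns products into sums, one expects $x_{|gh|} = x_{|g|} + x_{|h|}$, and similarly $x_{|g^{-1}|} = -x_{|g|}$ and $x_{|c|} = 0$ for $c \in \ci^*$. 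Thus the map $g \mapsto x_{|g|}$ factors through a group homomorphism $\ci[V]^*/\ci^* \to \erre$, and such a homomorphism is determined by its values on a generating set $f_1, \dots, f_m$. This forces $x_{|g|}$ to be an explicit integer-linear combination of $x_{|f_1|}, \dots, x_{|f_m|}$ for every $g$, hence the point is recovered from $\partial\pi_\famil(x)$, giving injectivity.

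First I would verify that $\famil = \{|f_1|, \dots, |f_m|\}$ is a proper family: since $f_1, \dots, f_m$ generate $\ci[V]^*/\ci^*$ and the coordinate functions $X_1, \dots, X_n, X_1^{-1}, \dots, X_n^{-1}$ lie in $\ci[V]^*$, each $X_j$ and $X_j^{-1}$ is, up to a nonzero constant, a monomial in the $f_i$; hence the map $E_\famil$ separates points the way $(|X_1|, \dots, |X_n|)$ does, and properness of the latter (already noted in the excerpt) transfers to $\famil$ after composing with the proper monomial map. So $\famil \in P_\gfamil$ and the canonical surjection $\partial\pi_\famil$ of the previous proposition is defined.

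Next I would establish the additivity of $g \mapsto x_{|g|}$ carefully, using the description of $C(\partial_\gfamil V)$ as an inverse limit of logarithmic limit sets. For any finite proper family $\famil'$ containing $|g|$, $|h|$, $|gh|$, the coordinates of a point of $\ameba_0(E_{\famil'}(V))$ satisfy the linear relations cut out by the binomial $Y_{gh} - Y_g Y_h$ vanishing on $E_{\famil'}(V)$; concretely, the relation $|gh| = |g|\,|h|$ on $V$ means the image $E_{\famil'}(V)$ lies in the subvariety $\{w_{gh} = w_g w_h\}$, whose logarithmic limit set is contained in the hyperplane $\{u_{gh} = u_g + u_h\}$ (this is the standard behaviour of amoebas of binomials, and follows from \cite{A1}). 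Passing to the inverse limit, $x_{|gh|} = x_{|g|} + x_{|h|}$; the relations $x_{|g^{-1}|} = -x_{|g|}$ and $x_{|c|} = 0$ are obtained the same way from $|g|\,|g^{-1}| = 1$ and $|c| \equiv |c|$ constant. Therefore $g \mapsto x_{|g|}$ descends to a homomorphism $\phi_x : \ci[V]^*/\ci^* \to \erre$, and since the classes of $f_1, \dots, f_m$ generate this group, $\phi_x$ — and hence every coordinate $x_{|g|}$ — is determined by $\phi_x([f_1]) = x_{|f_1|}, \dots, \phi_x([f_m]) = x_{|f_m|}$. If $\partial\pi_\famil(x) = \partial\pi_\famil(x')$, then $x_{|f_i|} = x'_{|f_i|}$ for all $i$, so $\phi_x = \phi_{x'}$, so $x = x'$; injectivity follows, and $\famil$ is an injective family.

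The main obstacle is the additivity step: one must be sure that the linear relation $u_{gh} = u_g + u_h$ holds on the whole logarithmic limit set $\ameba_0(E_{\famil'}(V))$, not merely on the amoeba itself, and that it survives passage to the inverse limit over the directed system of proper finite families. This is where the polyhedrality and functoriality results of \cite{A1} (in particular the behaviour of logarithmic limit sets under the monomial-type maps induced by multiplicative relations, and the surjectivity of the projections $\pi_{\famil',\famil}$ on logarithmic limit sets) do the real work; once those are in hand, the homomorphism argument is purely formal.
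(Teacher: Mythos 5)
Your proof is correct, and it gets to the conclusion by a genuinely different (more elementary) route than the paper, though both rest on the same key fact: every $f \in \ci[V]^*$ can be written as $c\prod_{i=1}^m f_i^{e_i}$, so the coordinate $x_{|f|}$ of any point $x \in C(\partial_\gfamil V)$ is forced to equal $\sum_i e_i x_{|f_i|}$. The paper justifies this determination through the non-archimedean description of the logarithmic limit set: by \cite[cor. 4.5]{A1}, every point of $C(\partial_{\famil\cup\{|f|\}} V)$ is the image, under the coordinatewise valuation, of a point of the extension of $E_{\famil\cup\{|f|\}}(V)$ to the Hardy field, and the multiplicativity of the valuation gives $v(|f(z)|)=\sum_i e_i v(|f_i(z)|)$; the argument is then run by contradiction, adding a single extra function $|f|$ to $\famil$. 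You instead observe that $E_{\famil'}(V)$ is contained in the binomial locus $\{w_{gh}=w_g w_h\}$, so every rescaled amoeba $\ameba_t(E_{\famil'}(V))$, and hence $\ameba_0(E_{\famil'}(V))$, lies in the linear hyperplane $\{u_{gh}=u_g+u_h\}$, and you package the resulting relations as a homomorphism $\ci[V]^*/\ci^* \freccia \erre$ determined by its values on the generators; this avoids the Hardy field altogether and proves injectivity directly rather than by contradiction. You also check that $\famil$ is proper, which is part of the definition of an injective family and is left implicit in the paper. Two points to make explicit in a final writeup: the constraint coming from a constant $c \in \ci^*$ is affine ($u_{|c|}=\log_{(1/t)}|c|$, not a linear relation), so $x_{|c|}=0$ holds only because of the rescaling inherent in passing to $\ameba_0$ --- this is exactly what makes your homomorphism descend to the quotient $\ci[V]^*/\ci^*$; and the final step $x=x'$ uses that the map $C(\partial_\gfamil V) \ni x \freccia {(x_f)}_{f \in \gfamil} \in \erre^\gfamil$ is injective, which is stated in section \ref{sez:compactification}.
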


\begin{proof}
Consider the projection $\pi_{\gfamil,\famil | C(\partial_{\gfamil} V)}:C(\partial_{\gfamil} V) \freccia C(\partial_{\famil} V)$ and suppose, by contradiction, that it is not injective. Then there exists an $x \in C(\partial_{\famil} V)$ such that $\pi_{\gfamil,\famil | C(\partial_{\gfamil} V)}^{-1}(x)$ contains at least two elements $y,y' \in C(\partial_{\gfamil} V)$. As $y \neq y'$ there exists an element $f \in \ci[V]^*$ such that the coordinates $y_{|f|}$ and $y_{|f|}'$ differ.

This means that also the projection $$\pi_{\famil \cup \{|f|\},\famil | C(\partial_{\famil\cup\{|f|\}} V)}:C(\partial_{\famil\cup\{|f|\}} V) \freccia C(\partial_{\famil} V)$$ is not injective. The set $C(\partial_{\famil\cup\{|f|\}} V)$ is simply the logarithmic limit set $\ameba_0(E_{\famil\cup\{|f|\}}(V)) \subset \erre^{m+1}$, and, by \cite[cor. 4.5]{A1} it is the image, under the componentwise valuation map, of the set $\overline{E_{\famil\cup\{|f|\}}(V)} \subset H({\overline{\erre}}^{\erre})$.

By the hypothesis on $f_1, \dots, f_m$, there exists integers $e_i \in \ze$ and a number $c \in \ci$ such that $f = c \prod_{i = 1}^m f_i^{e_i}$. Hence $|f| = |c| \prod_{i = 1}^m |f_i|^{e_i}$, and for every $z \in H({\overline{\erre}}^{\erre})$, $v(|f(z)|) = \sum_{i = 1}^m e_i v(|f_i(z)|)$. The valuation of $|f(z)|$ is determined by the valuations of $|f_i(z)|$, hence the map $\pi_{\famil \cup \{|f|\},\famil | C(\partial_{\famil\cup\{|f|\}} V)}$ is injective, a contradiction.
\end{proof}

\begin{corollary}
In particular the boundary $\partial_{\gfamil} V$ of a very affine variety has a natural piecewise linear structure, and the group $G$ acts on the compactification $\overline{V}_{\gfamil}$ with an action by complex polynomial maps on the interior part, and by piecewise linear maps on the boundary.
\end{corollary}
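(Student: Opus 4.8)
The plan is to assemble the corollary by simply stitching together the three results that immediately precede it. First I would invoke the previous proposition: the finite family $\famil = \{|f_1|, \dots, |f_m|\}$ built from representatives of generators of the finitely generated group $\ci[V]^*/\ci^*$ is an injective family inside $\gfamil = \{|f| \mid f \in \ci[V]^*\}$. Thus $\gfamil$ contains an injective family, and the hypothesis of the proposition on injective families is satisfied. Applying that proposition, $C(\partial_\gfamil V)$ carries a canonical piecewise linear structure, given by the compatible charts $C(\partial_\gfamil V) \freccia C(\partial_\famil V) \subset \erre^\famil$ for $\famil$ ranging over injective families; since $C(\partial_\gfamil V)$ is the cone over $\partial_\gfamil V$, the quotient $\partial_\gfamil V$ inherits a piecewise linear structure as well.

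Next I would recall the group action. As already observed in the discussion preceding the corollary, the group $G$ of complex polynomial automorphisms of $V$ acts on $V$ by continuous semi-algebraic maps (via the real identification $\ci = \erre^2$), preserves $\ci[V]^*$ and hence the family $\gfamil$. By the results of subsection \ref{subsez:action extension}, this action extends continuously to $\overline{V}_\gfamil$, acting on $C(\partial_\gfamil V)$ (seen inside $\erre^\gfamil$) by permutation of coordinates according to the action on $\gfamil$, and correspondingly on the spherical quotient $\partial_\gfamil V$. On the interior part $i(V)$ the action is, by construction, the original action by complex polynomial maps. It then remains only to check that the induced action on $\partial_\gfamil V$ is by piecewise linear maps with respect to the structure just defined: this is because, given $g \in G$, the composition of $g$ with an injective chart is again an injective chart (pulled back along the permutation $g$ induces on $\gfamil$), so $g$ acts on $\partial_\gfamil V$ by a change of piecewise linear charts, which is automatically piecewise linear.

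I do not expect a serious obstacle here, since the corollary is a packaging statement: all the hard work — establishing that logarithmic limit sets are polyhedral complexes with PL transition maps (\cite[thm. 3.11]{A1}), that the action extends, and that the generator-based family is injective — has already been done. The only point requiring a line of care is the last one above, namely that $G$-translates of injective charts are themselves injective charts, so that the $G$-action is by PL maps rather than merely continuous maps; but this is immediate from the fact that $g \in G$ permutes $\gfamil$, hence permutes the injective families in $\gfamil$ and intertwines the corresponding projections $C(\partial_\gfamil V) \freccia C(\partial_\famil V)$ with the coordinate permutations, which are linear. I would therefore write the proof as: combine the previous proposition (existence of an injective family), the proposition on injective families (existence of the canonical PL structure on $\partial_\gfamil V$), and subsection \ref{subsez:action extension} (continuous extension of the $G$-action), and note the one-line verification that the extended action is PL on the boundary because $G$ acts by permuting coordinates, which respects the PL structure.
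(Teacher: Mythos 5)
Your proposal is correct and matches the paper's (implicit) argument: the corollary is stated without proof precisely because it is the immediate combination of the preceding proposition on injective families, the proposition that the generator family $\{|f_1|,\dots,|f_m|\}$ is injective, and the extension of the $G$-action from subsection \ref{subsez:action extension}. Your extra one-line check that $G$-translates of injective charts are again injective charts (since $G$ permutes $\gfamil$ and intertwines the projections), so the boundary action is genuinely piecewise linear and not merely continuous, is a correct and worthwhile observation that the paper leaves tacit.
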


Unfortunately, when working with real algebraic sets, a general technique of this kind for constructing injective families does not work. If $V$ is a real algebraic set, a polynomial function that never vanish on $V$ is not necessarily invertible in the ring of coordinates. It is invertible in the ring of regular functions, but this ring is not always a finitely generated $\erre$-algebra, hence the group of invertible elements is not finitely generated.

The following proposition gives a sufficient hypothesis for the existence of injective families.

\begin{proposition}   \label{prop:positive polynomials}
Let $V \subset \erre^n$ be a real semi-algebraic set, and let $\gfamil$ be a proper family of positive continuous semi-algebraic functions on $V$. Suppose that there exists a proper family $\famil = \{f_1, \dots, f_m\} \subset \gfamil$ such that for every element $f \in \gfamil$ there exists a Laurent polynomial $P(x_1, \dots, x_m)$ with real and positive coefficients such that $f = P(f_1, \dots, f_m)$. Then $\famil$ is an injective family.
\end{proposition}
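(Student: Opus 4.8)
The plan is to mimic the argument used in the very-affine case (the proposition on representatives of generators of $\ci[V]^*/\ci^*$), replacing the multiplicative relation $f = c\prod f_i^{e_i}$ by the positive-Laurent-polynomial relation $f = P(f_1,\dots,f_m)$, and checking that such a relation still forces the valuation of $f$ to be determined by the valuations of the $f_i$. The key point is that a Laurent polynomial with positive coefficients evaluated on elements of positive valuation-norm behaves tropically like a $\max$ of linear forms, with no cancellation possible.

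First I would reduce, exactly as in the injective-family argument, to showing that for every single $f \in \gfamil$ the projection
$$\pi_{\famil \cup \{f\},\famil \,|\, C(\partial_{\famil\cup\{f\}} V)} : C(\partial_{\famil\cup\{f\}} V) \freccia C(\partial_\famil V)$$
is injective; since $C(\partial_\gfamil V)$ is the inverse limit over all proper finite subfamilies, injectivity of each of these one-variable extensions gives injectivity of $\pi_{\gfamil,\famil}$, and surjectivity is already known (prop.\ 4.7 of \cite{A1}, as recalled above). Then I would invoke \cite[cor.\ 4.5]{A1} to identify $C(\partial_{\famil\cup\{f\}} V)$ with the image under the componentwise valuation map of the extension $\overline{E_{\famil\cup\{f\}}(V)}$ over the real closed non-archimedean field $H({\overline{\erre}}^{\erre})$, and similarly for $\famil$. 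So it suffices to show: for $z$ in this extension, the valuation $v(f(z))$ is a function of the valuations $v(f_1(z)),\dots,v(f_m(z))$.

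The heart of the matter is the following computation. Write $f = P(f_1,\dots,f_m) = \sum_{\alpha} c_\alpha \prod_{i=1}^m f_i^{\alpha_i}$ with all $c_\alpha > 0$ and $\alpha$ ranging over a finite set of exponent vectors in $\ze^m$. For $z$ in the non-archimedean extension, each $f_i(z)$ is a positive element (the functions $f_i$ are positive on $V$, and positivity is preserved by the transfer principle / taking the extension), so each monomial $c_\alpha \prod_i f_i(z)^{\alpha_i}$ is positive, and hence so is every partial sum: there is \emph{no cancellation}. Therefore the valuation of the sum equals the minimum (or maximum, depending on the sign convention for $v$) of the valuations of the summands:
$$v\bigl(f(z)\bigr) = \min_{\alpha}\Bigl( v(c_\alpha) + \textstyle\sum_{i=1}^m \alpha_i\, v(f_i(z)) \Bigr).$$
Since the $c_\alpha$ are ordinary positive reals, $v(c_\alpha) = 0$, so $v(f(z))$ is literally the tropical polynomial $\min_\alpha \sum_i \alpha_i v(f_i(z))$ evaluated at the tuple $\bigl(v(f_1(z)),\dots,v(f_m(z))\bigr)$. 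In particular it depends only on those $m$ values. This shows the fibers of $\pi_{\famil\cup\{f\},\famil}$ over points of $C(\partial_\famil V)$ are singletons, and then injectivity of $\pi_{\gfamil,\famil|C(\partial_\gfamil V)}$ follows as in the very-affine proposition: if two points $y \neq y'$ of $C(\partial_\gfamil V)$ had the same image, they would differ in some $f$-coordinate, contradicting injectivity of the corresponding one-variable extension. Hence $\famil$ is an injective family.

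The step I expect to need the most care is the no-cancellation claim over the non-archimedean field — specifically, justifying that the $f_i(z)$ are positive (so that the sum of positive monomials has valuation equal to the min of the valuations, the standard fact that $v(a+b) = \min(v(a),v(b))$ when $a,b$ have the same sign and hence no leading-term cancellation can occur in the residue field). This requires being precise about how positivity of a semi-algebraic function transfers to the real closed extension $H({\overline{\erre}}^{\erre})$ and about the compatibility of $v$ with the order; once that is in place the rest is the formal tropicalization bookkeeping above.
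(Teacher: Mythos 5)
Your proposal follows essentially the same route as the paper's proof: reduce to the one-function extension $\pi_{\famil\cup\{f\},\famil}$, identify $C(\partial_{\famil\cup\{f\}}V)$ with the image of the componentwise valuation map over $H(\overline{\erre}^{\erre})$ via \cite[cor. 4.5]{A1}, and conclude because $f=P(f_1,\dots,f_m)$ with positive coefficients forces $v(f(z))$ to be determined by the $v(f_i(z))$. The only difference is that you spell out the no-cancellation computation $v(f(z))=\min_\alpha\sum_i\alpha_i v(f_i(z))$ (using positivity of the $f_i(z)$ in the real closed extension), which the paper asserts in one line; this is a correct and welcome elaboration, not a different argument.
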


\begin{proof}
Consider the projection $\pi_{\gfamil,\famil | C(\partial_{\gfamil} V)}:C(\partial_{\gfamil} V) \freccia C(\partial_{\famil} V)$, and suppose, by contradiction, that it is not injective. Then there exists an $x \in C(\partial_{\famil} V)$ such that $\pi_{\gfamil,\famil | C(\partial_{\gfamil} V)}^{-1}(x)$ contains at least two elements $y,y' \in C(\partial_{\gfamil} V)$. As $y \neq y'$ there exists an element $f \in \gfamil$ such that the coordinates $y_{f}$ and $y_{f}'$ differ.

This means that also the projection $$\pi_{\famil \cup \{f\},\famil | C(\partial_{\famil\cup\{f\}} V)}:C(\partial_{\famil\cup\{f\}} V) \freccia C(\partial_{\famil} V)$$ is not injective. The set $C(\partial_{\famil\cup\{f\}} V)$ is simply the logarithmic limit set $\ameba_0(E_{\famil\cup\{f\}}(V)) \subset \erre^{m+1}$, and, by \cite[cor. 4.5]{A1} it is the image, under the componentwise valuation map, of the set $\overline{E_{\famil\cup\{f\}}(V)} \subset H({\overline{\erre}}^{\erre})$.

By hypothesis we have $f = P(f_1, \dots, f_m)$, where the coefficients of $P$ are real and positive, hence for every $z \in H({\overline{\erre}}^{\erre})$, the valuation of $f(z)$ is determined by the valuations of $f_i(z)$, hence the map $\pi_{\famil \cup \{|f|\},\famil | C(\partial_{\famil\cup\{|f|\}} V)}$ is injective, a contradiction.
\end{proof}

Note that in the previous proposition we had to require the Laurent polynomial $P(x_1, \dots, x_n)$ to have positive coefficients. With the weaker hypothesis that the polynomial $P$ is positive whenever the variables $x_1, \dots, x_n$ are positive, the statement becomes false. For example, let $V = {(\erre_{>0})}^2$. The family of functions $\famil = \{x,y\}$ is proper, and $C(\partial_\famil V)$ is simply the plane $\erre^2$. Consider the family $\gfamil = \{x,y,x^2+(y-1)^2\}$. The set $E_\gfamil(V) \subset \erre^3$ and the logarithmic limit set $C(\partial_\gfamil V)$ are represented in figure \ref{fig:plane}. The map $\pi_{\gfamil,\famil}$ is not injective, hence $\famil$ is not an injective family of $\gfamil$.

\begin{figure}[htbp]
\begin{center}
\includegraphics[width=\figuresize]{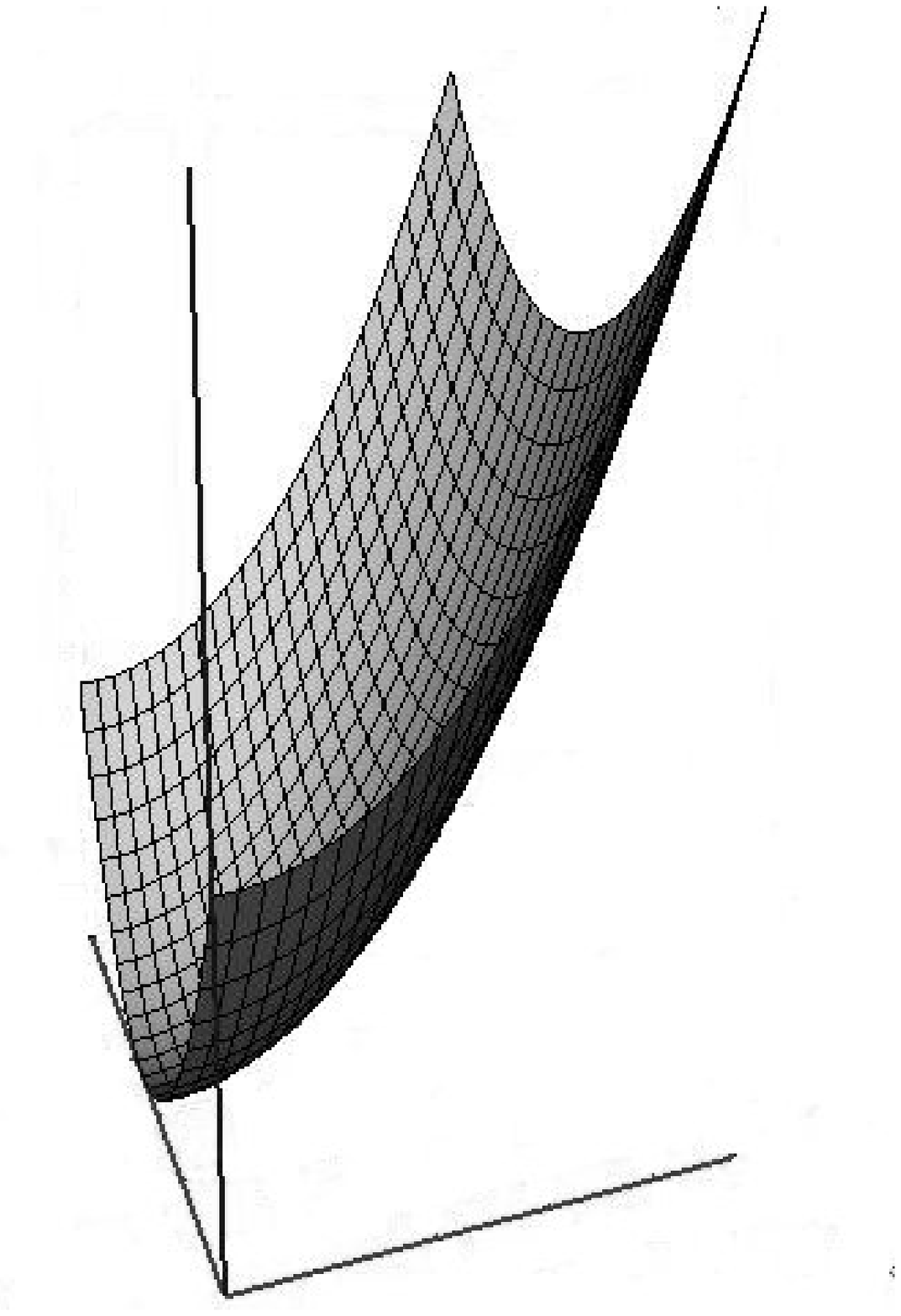}
\hspace{0.5cm}
\includegraphics[width=\figuresize]{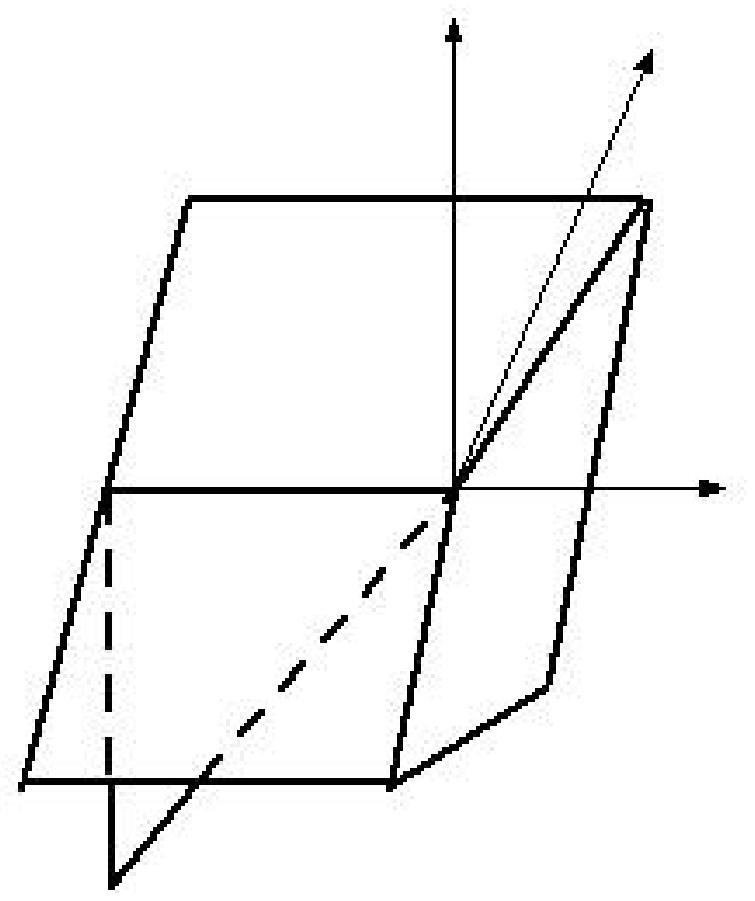}
\caption{$V = \{(x,y,z) \in {(\erre_{>0})}^3 \ |\  z = x^2 + (y-1)^2 \}$ (left picture), then $\ameba_0(V)$ is made up of four faces (right picture).}  \label{fig:plane}
\end{center}
\end{figure}

\subsection{Non-archimedean description}   \label{subsez:nonarchdescr}

Let $V \subset \erre^n$ be a semi-algebraic set, and let $\gfamil$ be a (possibly infinite) proper family of continuous semi-algebraic functions $V \freccia \erre_{>0}$.

Let $\effe$ be a real closed non-archimedean field with finite rank extending $\erre$. The convex hull of $\erre$ in $\effe$ is a valuation ring denoted by $\ocors_\leq$. This valuation ring defines a valuation $v:\effe^* \freccia \Lambda$, where $\Lambda$ is an ordered abelian group. As $\effe$ has finite rank, the group $\Lambda$ has only finitely many convex subgroups ${0} = \Lambda_0 \subset \Lambda_1 \subset \dots \subset \Lambda_r = \Lambda$. The number $r$ of convex subgroups is the \nuovo{rank} of the field $\effe$.

The quotient $\Lambda \freccia \Lambda / \Lambda_{r-1}$ is an ordered group of rank one, hence it is isomorphic to a subgroup of $\erre$. We fix one of these isomorphisms, and we denote by $\overline{v}$ the composition of the valuation $v$ with the quotient map $\Lambda \freccia \Lambda / \Lambda_{r-1}$, another valuation of $\effe$ that is real valued:
$$\overline{v}:\effe^* \freccia \erre$$

This valuation induces a norm and a log map:
$$\effe \ni h \freccia \parallel h\parallel=\exp(-\overline{v}(h)) \in \erre_{\geq 0}$$
$$\Log:{({\effe}_{>0})}^n \ni (h_1, \dots, h_n) \freccia \left(\log(\parallel h_1\parallel),\dots,\log(\parallel h_n\parallel)\right) \in \erre^n$$

Let $\overline{V}$ be the extension of $V$ to $\effe$, a semi-algebraic subset of ${(\effe_{>0})}^n$. Let $\overline{\gfamil} = \{ \overline{f} \ |\ f \in \gfamil\}$, where $\overline{f}:\overline{V} \freccia \effe_{>0}$ is the extension of the function $f:V \freccia \erre_{>0}$.

Let $\famil = \{f_1, \dots, f_m\} \subset \gfamil$ be a finite proper family, and let $\overline{\famil} = \{\overline{f_1}, \dots, \overline{f_m}\} \subset \overline{\gfamil}$ be the corresponding family of extensions. We will denote by $\overline{E_{\famil}}:\overline{V} \freccia {(\effe_{>0})}^m$ the extension of the map $E_\famil$.

\begin{proposition}
The image of the map
$$\Log:{(\effe_{>0})}^n \supset \overline{E_\famil}(\overline{V}) \ni x \freccia (-\overline{v}(x_1), \dots, -\overline{v}(x_n)) \in \erre^m$$
is contained in the logarithmic limit set $\ameba_0(E_\famil(V))$.
\end{proposition}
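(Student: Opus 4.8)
The plan is to reduce the statement to the known description of the logarithmic limit set $\ameba_0(E_\famil(V))$ as a limit object, and then to produce, for any point in the image of $\Log$, a curve or sequence in $E_\famil(V)$ realizing it as an asymptotic direction. First I would recall that $\ameba_0(E_\famil(V))$ is the logarithmic limit set of the closed semi-algebraic set $E_\famil(V) \subset {(\erre_{>0})}^m$; by the results of \cite{A1} (and as used in the earlier propositions of this section), it coincides with the set of limit directions of $\Log_{(1/t)}$ of points of $E_\famil(V)$ as $t \to \infty$, and also with the image under the componentwise valuation map of the extension of $E_\famil(V)$ to a suitable real closed non-archimedean field. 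The essential point is that the field $\effe$ fixed in this subsection, equipped with the real-valued valuation $\overline v$, is (or embeds into) exactly such a non-archimedean field, so that the valuation map $x \mapsto (-\overline v(x_1), \dots, -\overline v(x_m))$ applied to $\overline{E_\famil}(\overline V)$ lands in $\ameba_0(E_\famil(V))$.

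The key steps, in order, would be: (1) observe that $\overline{E_\famil}(\overline V)$ is the extension to $\effe$ of the semi-algebraic set $E_\famil(V)$ — this uses that extension of semi-algebraic sets commutes with taking images under semi-algebraic maps, a standard Tarski–Seidenberg/transfer fact; (2) invoke \cite[cor. 4.5]{A1} (already cited twice above in this very section) which identifies $\ameba_0$ of a closed semi-algebraic subset of ${(\erre_{>0})}^m$ with the image of its extension to the Hahn-type field $H({\overline\erre}^\erre)$ under the componentwise valuation map; (3) compare the given field $\effe$ with $H({\overline\erre}^\erre)$: since $\effe$ is real closed, non-archimedean, of finite rank and extends $\erre$, and we are using only the rank-one real-valued quotient valuation $\overline v$, the valued field $(\effe, \overline v)$ embeds (as a valued field over $\erre$) into a rank-one real closed field, and the image of the valuation map is unchanged — equivalently, any element of $\overline{E_\famil}(\overline V)$ together with the values $-\overline v$ of its coordinates can be matched, by completeness/model-completeness of the theory of real closed valued fields, by a point of the extension to $H({\overline\erre}^\erre)$ with the same valuation vector; (4) conclude that the valuation vector lies in $\ameba_0(E_\famil(V))$.

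Alternatively, and perhaps more cleanly, one can avoid naming $H({\overline\erre}^\erre)$: take $x \in \overline{E_\famil}(\overline V)$ and let $\xi = (-\overline v(x_1), \dots, -\overline v(x_m)) \in \erre^m$. Because $E_\famil(V)$ is semi-algebraic and defined over $\erre$, the first-order condition "there is a point of $E_\famil(V)$ whose coordinates have prescribed valuation behaviour approximating $\xi$" is expressible, and holds in $\effe$; by transfer it holds in $\erre$, producing for each $t$ a point $p_t \in E_\famil(V)$ with $\Log_{(1/t)}(p_t) \to \xi$ as $t \to \infty$, which is precisely the defining property of membership in $\ameba_0(E_\famil(V))$. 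The main obstacle I expect is step (3)/(4): carefully justifying that passing from the possibly higher-rank field $\effe$ to the real-valued valuation $\overline v$ does not enlarge the valuation image beyond $\ameba_0$ — i.e. that $\ameba_0$ as defined via ordinary logarithmic limit sets over $\erre$ already captures everything seen by $\overline v$ on $\overline V$. This is exactly the content of the non-archimedean characterization of logarithmic limit sets in \cite{A1}, so the proof amounts to citing that characterization and checking the hypotheses ($E_\famil(V)$ closed semi-algebraic in the positive orthant, $\effe$ real closed extending $\erre$ with its convex-hull valuation), rather than re-deriving it.
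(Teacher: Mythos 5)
Your second (``alternative'') argument is sound and, at bottom, runs on the same engine as the paper's proof, though it is organized differently. The paper picks $t \in \effe$ with $t>0$ and $\overline{v}(t)=1$, notes that $t$ is then smaller than every positive real, so $\effe$ contains the real closure $H(\overline{\erre})$ of $\erre(t)$ with $\overline{v}$ restricting to the standard valuation there; it then quotes \cite[cor. 4.6]{A1} to place the $\Log$-image of the points of $\overline{V}$ with coordinates in $H(\overline{\erre})$ inside $\ameba_0(E_\famil(V))$, and handles all remaining points of $\overline{V}$ by repeating, word by word, the closure/approximation argument of \cite[thm. 4.3]{A1}. Your transfer argument --- for rationals $q_i < \xi_i < q_i'$ and any real $N$, the first-order sentence asserting the existence of $s>N$ and a point $y$ of $E_\famil(V)$ with $s^{q_i} < y_i < s^{q_i'}$ holds in $\effe$ (witnessed by the given point and any positive element of negative $\overline{v}$-value, which is automatically larger than every real), hence holds in $\erre$ by Tarski--Seidenberg --- is essentially that same approximation step carried out directly down to $\erre$, concluded via the sequence characterization of the logarithmic limit set and the fact that $\ameba_0(E_\famil(V))$ is a closed cone (which absorbs the harmless normalization of $\overline{v}(s)$). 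What your route buys is independence from the intermediate field $H(\overline{\erre})$ and from cor.~4.6; what the paper's route buys is that the only new verification is the compatibility of the valuations under the inclusion $H(\overline{\erre}) \subset \effe$, the rest being citations. By contrast, your first route is under-justified as stated: ``$(\effe,\overline{v})$ embeds into a rank-one real closed field'' is not quite the relevant assertion (it is the value group of $\overline{v}$, not the ordered field $\effe$, that has rank one), and the proposed matching of the exact valuation vector by a point of the extension to $H(\overline{\erre}^{\erre})$ via completeness/model-completeness of real closed valued fields cannot be invoked directly, because there is no given embedding of either valued field into the other and the real numbers $\xi_i$ are not parameters available in both structures; making this precise would amount to redoing the approximation argument anyway. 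Since your alternative argument stands on its own, the proposal as a whole is acceptable.
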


\begin{proof}
Let $t \in \effe$ be an element such that $t > 0$ and $\overline{v}(t) = 1$. Consider the subfield $\erre(t) \subset \effe$. The order induced by $\effe$ has the property that $t>0$ and $\forall x \in \erre_{>0}: t < x$. Hence $\effe$ contains the real closure of $\erre(t)$ with reference to this order, i.e. $H(\overline{\erre})$. Moreover the valuation $\overline{v}$ on $\effe$ restricts to the valuation we have defined on $H(\overline{\erre})$, as, if $\ocors_{\leq}$ is the valuation ring of $\effe$, $\ocors_{\leq} \cap H(\overline{\erre})$ is precisely the valuation ring $\ocors$ of $H(\overline{\erre})$.

We recall that, by corollary \cite[cor. 4.6]{A1}, $\Log( \overline{V} \cap H(\overline{\erre})) \subset \ameba_0(E_\famil(V))$. By reporting, word by word, the proof of \cite[thm. 4.3]{A1}, we can prove that the image $\Log(\overline{V})$ is contained in the closure of $\Log( \overline{V} \cap H(\overline{\erre}))$, i.e. it is contained in $\ameba_0(E_\famil(V))$.
\end{proof}

In other words, the image of the map
$$\Log_\famil = \Log \circ \overline{E_\famil}: \overline{V} \ni x \freccia (-\overline{v}(f_1(x)), \dots, -v(f_m(x))) \in \erre^m$$
is contained in $\ameba_0(E_\famil(V)) = C(\partial_\famil V)$.

The system of maps $\Log_\famil: \overline{V} \freccia C(\partial_\famil V)$, defined for every $\famil \in P_\gfamil$, induces by the universal property a well defined map $\Log_{\gfamil}: V \freccia C(\partial_\gfamil V)$. The map $\Log_{\gfamil}$ can be identified with the map
$$\overline{V} \ni x \freccia {\left( -\overline{v}(f(x))  \right)}_{f \in \gfamil} \in C(\partial_\gfamil V) \subset \erre^{\gfamil} $$

This map is not surjective for every field $\effe$. The aim of this subsection is to show that there exists a real closed non-archimedean field of finite rank $\effe$ such that the map $\Log_\gfamil$ is surjective. As a consequence, the boundary $\partial_\gfamil V$ is the spherical quotient of the set $\Log_\gfamil(\overline{V})$.

The easiest case is when $\gfamil$ has an injective family. In this case we use the Hardy field $H(\overline{\erre}^{\erre})$, as in \cite[subsec. 4.1]{A1}.

\begin{proposition}        \label{prop:nonarchinjective}
Suppose that the family $\gfamil$ contains an injective family $\famil$. Then if $\effe = H(\overline{\erre}^{\erre})$, the image of the map $\Log_\gfamil$ is the whole $C(\partial_\gfamil V)$.
\end{proposition}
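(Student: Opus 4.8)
The plan is to reduce the surjectivity of $\Log_\gfamil$ over $\effe = H(\overline{\erre}^{\erre})$ to the surjectivity of the single map $\Log_\famil$ for the injective family $\famil$, and then invoke the already-established non-archimedean description for finite proper families from \cite[subsec. 4.1]{A1}. First I would recall that, by \cite[cor. 4.5]{A1} (the statement invoked twice above in this very subsection), for a finite proper family $\famil$ the map $\Log_\famil : \overline{V} \freccia \ameba_0(E_\famil(V)) = C(\partial_\famil V)$ is \emph{surjective} when $\effe = H(\overline{\erre}^{\erre})$: indeed $C(\partial_\famil V)$ is the logarithmic limit set of $E_\famil(V)$, and that is exactly the image of $\overline{E_\famil(V)} \subset H(\overline{\erre}^{\erre})$ under the componentwise valuation map.

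Next I would use the universal property of the inverse limit. Recall $C(\partial_\gfamil V) = \lim_{\longleftarrow} C(\partial_\famil V)$ with canonical projections $\pi_{\gfamil,\famil}$, and that the maps $\Log_\famil$ are compatible with the bonding maps $\pi_{\famil',\famil}$ (both are given by the componentwise real-valued valuation $-\overline{v}$ of the corresponding coordinates), so they induce $\Log_\gfamil : \overline{V} \freccia C(\partial_\gfamil V)$. Let $z \in C(\partial_\gfamil V)$ be arbitrary, and set $x_0 = \pi_{\gfamil,\famil}(z) \in C(\partial_\famil V)$. By the surjectivity of $\Log_\famil$ just recalled, there is a point $p \in \overline{V}$ with $\Log_\famil(p) = x_0$. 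I claim $\Log_\gfamil(p) = z$. To see this, it suffices to check that the two points of $C(\partial_\gfamil V)$ agree, and here is where the injectivity of $\famil$ enters: by the definition of an injective family, the projection $\pi_{\gfamil,\famil | C(\partial_\gfamil V)} : C(\partial_\gfamil V) \freccia C(\partial_\famil V)$ is a bijection. Since $\pi_{\gfamil,\famil}(\Log_\gfamil(p)) = \Log_\famil(p) = x_0 = \pi_{\gfamil,\famil}(z)$, injectivity of this projection forces $\Log_\gfamil(p) = z$. Hence $\Log_\gfamil$ is surjective onto $C(\partial_\gfamil V)$, and since it already maps into $C(\partial_\gfamil V)$ by the proposition preceding this statement, the image is exactly $C(\partial_\gfamil V)$.

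The only point requiring a little care, and the one I would expect to be the main obstacle, is the verification that $\pi_{\gfamil,\famil}(\Log_\gfamil(p)) = \Log_\famil(p)$, i.e.\ that the map $\Log_\gfamil$ induced by the universal property really does restrict on the $\famil$-coordinates to $\Log_\famil$. This is essentially the identification, stated just above the proposition, of $\Log_\gfamil$ with the map $x \mapsto (-\overline{v}(f(x)))_{f \in \gfamil}$: once one has that explicit description, the compatibility with $\pi_{\gfamil,\famil}$ is immediate because $\pi_{\gfamil,\famil}$ is by definition the projection onto the coordinates indexed by $\famil$. So modulo unwinding the definitions, the argument is a short diagram chase combining the surjectivity of the finite-family map with the defining bijectivity of the injective family.
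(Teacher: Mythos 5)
Your argument is exactly the paper's proof: take $z \in C(\partial_\gfamil V)$, use the surjectivity of $\Log_\famil$ from \cite[cor. 4.5]{A1} to find $p \in \overline{V}$ with $\Log_\famil(p) = \pi_{\gfamil,\famil}(z)$, and conclude $\Log_\gfamil(p) = z$ from the injectivity of $\pi_{\gfamil,\famil}$ restricted to $C(\partial_\gfamil V)$. The compatibility $\pi_{\gfamil,\famil}\circ\Log_\gfamil = \Log_\famil$ that you flag is indeed immediate from the explicit coordinate description of $\Log_\gfamil$, so the proof is complete and matches the paper's.
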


\begin{proof}
Let $y \in C(\partial_\gfamil V)$. The map $\Log_\famil:\overline{V} \freccia \ameba_0(\overline{E_\famil}(V))$ is surjective by \cite[cor. 4.5]{A1}, hence there exists $x \in \overline{V}$ such that $\Log_\famil(x) = \pi_{\gfamil,\famil}(y)$. Hence $\pi_{\gfamil,\famil}(\Log_\gfamil(x)) = \pi_{\gfamil,\famil}(y)$, and, as $\famil$ is an injective family, $\Log_\gfamil(x) = y$.
\end{proof}

We can get a similar result even if we remove the hypothesis of the existence of an injective family. To do this we will need to use a larger field. If $V$ is a semi-algebraic set of dimension $r$, consider the group $\erre^r$, with the lexicographic order. We will use the field of transfinite Puiseaux series with real coefficients and exponents in $\erre^r$:
$$\erre((t^{\erre^r})) = \left\{ \sum_{r \in \erre^r} a_r t^r \ |\ a_r \in \erre, \{r \in \erre^r \ |\ a_r \neq 0\} \mbox{ is well ordered} \right\}$$
This is a real closed non-archimedean field of rank $r$, with a surjective valuation $v:\erre((t^\erre))^* \freccia \erre^r$, and a quotient surjective valuation $\overline{v}:\erre((t^\erre))^* \freccia \erre$.

The rest of this subsection is dedicated to the proof of theorem \ref{teo:non-arch descr}: if $\effe = \erre((t^{\erre^r}))$, the map $\Log_\gfamil$ is surjective.

Let $y \in C(\partial_\gfamil V)$, and let $[y] \in \partial_\gfamil V$ be the corresponding point. The set of all semi-algebraic subsets of $V$ is a Boolean algebra. Consider the set $\phi_y$ of all semi-algebraic subsets $A \subset V$ such that there exists a neighborhood $U$ of $[y]$ in $\overline{V}_\gfamil$ with $U \cap V \subset A$. The set $\phi_y$ is a \nuovo{filter} in the Boolean algebra of all semi-algebraic subsets of $V$, i.e. $\phi_y$ is closed for finite intersections, if $A \in \phi_y$ and $A \subset B$, then $B \in \phi_y$, $V \in \phi_y$, $\emptyset \not\in \phi_y$. The ultrafilter lemma states that every filter is contained in an \nuovo{ultrafilter}, i.e. a filter such that for every semi-algebraic set $A \subset V$, $A$ or $V \setminus A$ is in the ultrafilter. We choose an ultrafilter $\alpha$ containing $\phi_y$. Note that if $A \in \alpha$, then the complement of $A$ in $\overline{V}_\gfamil$ does not contain a neighborhood of $[y]$, hence the closure of $A$ in $\overline{V}_\gfamil$ contains the boundary point $[y]$. Intuitively speaking, the choice of an ultrafilter $\alpha$ containing $\phi_y$ can be interpreted as the choice of a way for converging to $[y]$.

Now, given an ultrafilter $\alpha$ as above, we construct a field $\mathfrak{K}(\alpha)$, as in \cite[subsec. 5.3]{Co}. Consider the ring $S(V)$ of all semi-algebraic functions from $V$ to $\erre$. Let $I(\alpha)$ be the subset of all functions whose zero locus is in $\alpha$. We define $\mathfrak{K}(\alpha) = S(V) / I(\alpha)$. It is easy to show that every non-zero element is invertible, hence $\mathfrak{K}(\alpha)$ is a field. Equivalently, $\mathfrak{K}(\alpha)$ can be defined as the quotient of $S(V)$ under the relation $f \sim g$ if and only if $\{f(x) = g(x)\} \in \alpha$. This second definition is the one used in \cite[subsec. 5.3]{Co}. We denote the equivalence class of a semi-algebraic function $f$ by $[f]$. In the same way we can define the order on $\mathfrak{K}(\alpha)$: $[f] \leq [g]$ if and only if $\{f(x) \leq g(x)\} \in \alpha$, and this definition does not depend on the choice of the representative. Hence $\mathfrak{K}(\alpha)$ is an ordered field, and has an $\mathcal{OS}$-structure.

\begin{proposition}
Given an $(L_\mathcal{OS})$-formula $\phi(x_1,\dots,x_n)$, and given definable functions $f_1,\dots,f_n$, we have:
$$\mathfrak{K}(\alpha) \vDash \phi([f_1], \dots, [f_n]) \Leftrightarrow \exists S \in \alpha: \forall t \in S: \overline{\erre} \vDash \phi(f_1(t),\dots,f_n(t))$$
\end{proposition}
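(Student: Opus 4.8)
The statement is a Łoś-type theorem for the ultrapower-like field $\mathfrak{K}(\alpha)$, and the natural strategy is induction on the structure of the $(L_{\mathcal{OS}})$-formula $\phi$. First I would fix the setup: recall that $\overline{\erre}$ denotes the real closed field we are working over (the ambient field in which $V$ lives, after the extension of scalars discussed earlier), that $\mathfrak{K}(\alpha) = S(V)/I(\alpha)$ carries the order $[f] \leq [g] \Leftrightarrow \{x : f(x) \leq g(x)\} \in \alpha$, and that the $\mathcal{OS}$-structure on $\mathfrak{K}(\alpha)$ is given pointwise, i.e. $[f] + [g] = [f+g]$, $[f]\cdot[g] = [fg]$, and more generally for a definable function $h$, $h^{\mathfrak{K}(\alpha)}([f_1],\dots,[f_n]) = [x \mapsto h(f_1(x),\dots,f_n(x))]$; one must first check this is well defined, which is immediate since the relations defining $I(\alpha)$ and the order are preserved on sets in the filter $\alpha$ (using that $\alpha$ is closed under finite intersection and supersets). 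The key point making everything run is that $\alpha$ is an \emph{ultra}filter, so for every semi-algebraic $A \subseteq V$ exactly one of $A$, $V \setminus A$ lies in $\alpha$.

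The induction then proceeds as follows. For atomic formulas, such as $t_1(x_1,\dots,x_n) = t_2(x_1,\dots,x_n)$ or $t_1 \leq t_2$ where $t_1,t_2$ are $(L_{\mathcal{OS}})$-terms, the claim reduces to the definition of equality and order on $\mathfrak{K}(\alpha)$, together with the observation that evaluating a term at $([f_1],\dots,[f_n])$ gives the class of the semi-algebraic function obtained by evaluating the term pointwise; here the fact that compositions of semi-algebraic functions are semi-algebraic is used, so the pointwise-evaluated term really is an element of $S(V)$. The boolean connectives $\wedge$ and $\neg$ are where the ultrafilter property is essential: $\wedge$ uses that $\alpha$ is closed under finite intersection, and $\neg$ uses that $A \in \alpha \Leftrightarrow (V \setminus A) \notin \alpha$, so that the set witnessing $\phi$ and the set witnessing $\neg\phi$ cannot both fail to be in $\alpha$, nor both be in it (intersect them and use $\emptyset \notin \alpha$). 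The disjunction $\vee$ follows from $\wedge$ and $\neg$, or directly: if $A \cup B \in \alpha$ then since $\alpha$ is an ultrafilter, $A \in \alpha$ or $B \in \alpha$.

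The main obstacle — and the step that deserves the most care — is the existential quantifier $\exists x_{n+1}\, \psi(x_1,\dots,x_{n+1})$. The forward direction ($\Leftarrow$) is easy: if on a set $S \in \alpha$ there is, for each $t \in S$, some witness $y$ with $\overline{\erre} \vDash \psi(f_1(t),\dots,f_n(t),y)$, then by the semi-algebraic (definable) choice / curve selection available in the o-minimal structure $\overline{\erre}$ — more precisely, definable Skolem functions — one can choose such a witness \emph{definably} as a semi-algebraic function $g$ of $t$ on $S$ (extend $g$ arbitrarily, e.g. by $0$, off $S$); then $[g] \in \mathfrak{K}(\alpha)$ and by the inductive hypothesis $\mathfrak{K}(\alpha) \vDash \psi([f_1],\dots,[f_n],[g])$, whence $\mathfrak{K}(\alpha) \vDash \exists x_{n+1}\,\psi$. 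The reverse direction ($\Rightarrow$) is the one to be careful about: if $\mathfrak{K}(\alpha) \vDash \exists x_{n+1}\,\psi([f_1],\dots,[f_n],x_{n+1})$, pick a witness, which is some class $[g]$ with $g \in S(V)$; by the inductive hypothesis applied to $\psi$ with the $n+1$ functions $f_1,\dots,f_n,g$, there is $S \in \alpha$ with $\overline{\erre} \vDash \psi(f_1(t),\dots,f_n(t),g(t))$ for all $t \in S$, and then for each such $t$ the element $g(t)$ witnesses $\overline{\erre} \vDash \exists x_{n+1}\,\psi(f_1(t),\dots,f_n(t),x_{n+1})$, as required. Thus the genuine content is the availability of definable Skolem functions in the $\mathcal{OS}$-structure on $\overline{\erre}$ for the $\Leftarrow$ direction; once that is granted (it is a standard feature of o-minimal, hence of real closed, structures with definable choice), the induction closes. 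Universal quantifiers are handled by rewriting $\forall = \neg\exists\neg$ using the already-established cases.
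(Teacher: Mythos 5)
Your proposal is correct: the standard {\L}o\'{s}-style induction on formulas, with definable (semi-algebraic) choice supplying a semi-algebraic witness function in the existential step, is precisely the content of the result being used, and your treatment of atomic formulas, connectives (via the ultrafilter property) and quantifiers is sound. Note that the paper gives no argument of its own here --- its proof is just the citation to \cite[thm. 5.8]{Co} --- so your induction is in substance a reconstruction of that cited proof rather than a genuinely different route.
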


\begin{proof}
See \cite[thm. 5.8]{Co}.
\end{proof}

In particular $\mathfrak{K}(\alpha)$ is a real closed field. For every element $a \in \erre$, the constant function with value $a$ defines an element of $\mathfrak{K}(\alpha)$ that is identified with $a$. This defines an an embedding $\erre \freccia \mathfrak{K}(\alpha)$.

Consider an embedding $V \freccia \erre^n$. For every semi-algebraic subset $A \subset V$, we denote by $\overline{A}$ its Zariski-closure in $\erre^n$. Consider the set
$$W_\alpha =  \bigcap_{A \in \alpha} \overline{A}$$
Every infinite intersection of Zariski-closed sets can be written as a finite intersection, hence $W_\alpha$ is a Zariski-closed subset of $\erre^n$. Moreover $W_\alpha$ is irreducible and $W_\alpha \cap V \in \alpha$. We will call the set $W_\alpha$ the \nuovo{algebraic support} of the ultrafilter $\alpha$. Note that the algebraic support depends on the chosen embedding $V \freccia \erre^n$.

Every polynomial $f \in \erre[x_1, \dots, x_n]$ defines a semi-algebraic function $f:V \freccia \erre$, and an element $[f] \in \mathfrak{K}(\alpha)$. Two polynomials define the same element of $\mathfrak{K}(\alpha)$ if and only if the coincide on $W_\alpha$. In other words, if $\erre[W_\alpha]$ and $\erre(W_\alpha)$ are, respectively, the coordinate ring of $W_\alpha$ and its field of fractions, there is an embedding $\erre(W_\alpha) \subset \mathfrak{K}(\alpha)$. As every semi-algebraic function satisfies a polynomial equation, the field $\mathfrak{K}(\alpha)$ is algebraic over $\erre(W_\alpha)$, it is its real closure with respect to the ordering induced by the embedding.

The convex hull of $\erre$ in $\mathfrak{K}(\alpha)$ is a valuation ring denoted by $\ocors_\leq$. As above, this valuation ring defines a valuation $v:\mathfrak{K}(\alpha)^* \freccia \Lambda$, where $\Lambda$ is an ordered abelian group. If $V$ has dimension $r$, the transcendence degree over $\erre$ of the field $\erre(W_\alpha)$ is at most $r$, hence, by \cite[cap. VI, thm. 3, cor. 1]{ZS2} and \cite[cap. VI, thm. 15]{ZS2}, the group $\Lambda$ has rank at most $r$.

As before the composition of the valuation $v$ with the quotient map $\Lambda \freccia \Lambda / \Lambda_{r-1}$, defines another valuation of $\mathfrak{K}(\alpha)$ that is real valued:
$$\overline{v}:\mathfrak{K}(\alpha)^* \freccia \erre$$

\begin{proposition}
Let $V$ be a semi-algebraic set, and let $\famil$ be a finite proper family of continuous positive semi-algebraic functions on $V$. If $g$ is a continuous positive semi-algebraic function, then there exist $A,B \in \erre_{>0}$ and $n \in \enne$ such that:
$$\forall x \in V : g(x) \leq A {\left(\sum_{f \in \famil}{f(x) + f^{-1}(x)}\right)}^n + B $$
\end{proposition}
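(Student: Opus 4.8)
The plan is to use the properness of the family $\famil$ to obtain an upper bound on $g$ in terms of the $f$'s, exploiting that $E_\famil$ is proper and that everything is semi-algebraic. First I would set $h(x) = \sum_{f \in \famil}\bigl(f(x) + f^{-1}(x)\bigr)$, a continuous positive semi-algebraic function on $V$, and observe that $h$ is proper: since $E_\famil : V \freccia (\erre_{>0})^m$ is proper, a sequence $x_k$ leaves every compact subset of $V$ exactly when $E_\famil(x_k)$ leaves every compact subset of $(\erre_{>0})^m$, i.e.\ when some coordinate $f(x_k)$ tends to $0$ or to $+\infty$; in either case $h(x_k) \to +\infty$. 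Conversely if $h(x_k)$ is bounded then all $f(x_k)$ stay in a compact subinterval of $\erre_{>0}$, so $E_\famil(x_k)$ stays in a compact set and, by properness, $x_k$ stays in a compact subset of $V$. Hence $h$ is a proper continuous semi-algebraic function.

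Next I would record the key consequence: for any proper continuous positive semi-algebraic $h$ on $V$ and any continuous semi-algebraic $g$ on $V$, the function $g$ is bounded above by a semi-algebraic function of $h$ alone. Indeed consider the semi-algebraic set $G = \{(s,u) \in \erre_{>0} \times \erre \ |\ \exists x \in V : h(x) = s,\ g(x) = u\}$, the image of $(h,g) : V \freccia \erre_{>0}\times\erre$, which is semi-algebraic by Tarski--Seidenberg. Define $\Phi(s) = \sup\{u \ |\ (s,u) \in G\}$. Because $h$ is proper, for each $s$ the slice $\{x \in V \ |\ h(x) \le s\}$ is compact, so $g$ is bounded on it; hence $\Phi(s) < \infty$ for every $s$, and $\Phi$ is a well-defined semi-algebraic function on the image of $h$ (its graph is first-order definable from $G$). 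By construction $g(x) \le \Phi(h(x))$ for all $x \in V$.

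Finally I would invoke the standard growth bound for one-variable semi-algebraic functions: a semi-algebraic function $\Phi$ of a single real variable defined on an unbounded subset of $\erre_{>0}$ grows at most polynomially at infinity and is locally bounded, so there exist $A', B' \in \erre_{>0}$ and $n \in \enne$ with $\Phi(s) \le A' s^n + B'$ for all $s$ in its domain (this is immediate from the fact that $\Phi$ is eventually given by a branch of an algebraic function, hence $\Phi(s)/s^n$ has a finite limit for suitable $n$; cf.\ the curve selection / Puiseux description of semi-algebraic functions). Combining, $g(x) \le A' h(x)^n + B'$, which after renaming constants is exactly the asserted inequality
$$g(x) \le A \Bigl(\sum_{f \in \famil}\bigl(f(x) + f^{-1}(x)\bigr)\Bigr)^n + B.$$

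The main obstacle is the second step: one must be careful that $\Phi(s)$ is genuinely finite for every $s$ in the range of $h$, not merely for large $s$, and that $\Phi$ is semi-algebraic rather than just some abstract function. Finiteness is where properness of $\famil$ (equivalently of $h$) is essential — without it the slice $\{h \le s\}$ need not be compact and $g$ could be unbounded on it — and semi-algebraicity follows from Tarski--Seidenberg applied to the definable set $G$. Once $\Phi$ is in hand the polynomial growth estimate is routine one-variable semi-algebraic geometry.
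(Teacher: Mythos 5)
Your argument is correct, but it takes a genuinely different route from the paper. The paper proceeds by contradiction inside its own machinery: assuming the bound fails for sequences $A_n,B_n\to\infty$, it extracts a point of the boundary of the compactification $\overline{V}_{\famil\cup\{g\}}$ whose coordinates are $z_g\neq 0$ and $z_f=0$ for all $f\in\famil$, and then uses the structure theorem for logarithmic limit sets (\cite[thm. 3.6]{A1}) to produce a sequence $y_k\in V$ with $g(y_k)\to\infty$ while every $f\in\famil$ stays bounded, contradicting properness of $\famil$. You instead give a direct \L ojasiewicz-type growth estimate: properness of $E_\famil$ makes $h=\sum_{f\in\famil}(f+f^{-1})$ proper, the supremum $\Phi$ of $g$ over fibers of $h$ is a finite semi-algebraic function of one variable by Tarski--Seidenberg, and one-variable semi-algebraic functions grow at most polynomially. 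Your approach is self-contained in elementary real algebraic geometry and does not invoke the results of \cite{A1}, while the paper's is shorter given that the compactification and logarithmic limit sets are already in place and produces the contradiction in the same language used throughout the section. One small refinement you should make explicit: for the final estimate $\Phi(s)\leq A's^n+B'$ on the whole domain you need $\Phi$ to be bounded on bounded subsets of its domain, not merely finite at each point and locally bounded --- a one-variable semi-algebraic function can blow up at a finite endpoint of its domain. This follows from exactly the compactness you already use (for $s$ in a bounded set, the corresponding points lie in the compact set $\{h\leq s_0\}$, on which $g$ is bounded), or most cleanly by defining $\Phi(s)=\sup\{g(x)\ |\ h(x)\leq s\}$, which is non-decreasing, finite by properness, and semi-algebraic, after which the polynomial bound at infinity suffices.
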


\begin{proof}
Suppose, by contradiction, that the statement is false. Fix two increasing unbounded sequences $A_n, B_n \in \erre$, then for all $n \in \enne$ there exists a point $x_n \in V$ such that
$$ g(x_n) > A_n {\left(\sum_{f \in \famil}{f(x_n) + f^{-1}(x_n)}\right)}^n + B_n $$
There is no subsequence of $x_n$ contained in a compact subset of $V$, because $g(x_n)$ is unbounded. If we consider the compactification $\overline{V}_{\famil \cup \{g\}}$, then we can extract a subsequence $x_{n_k}$ converging to a point $[z] \in \partial_{\famil \cup \{g\}} V$. As $g$ grows faster than every $f \in \famil$ along the sequence $x_{k_n}$, a point $z \in C(\partial_{\famil \cup \{g\}} V)$ corresponding to $[z]$ has coordinates $z_g \neq 0$ and $z_f = 0$ for all $f \in \famil$.

The logarithmic limit set $\ameba_0(E_{\famil \cup \{g\}}(V))$ contains the point $(0,0,\dots,1)$, where the $1$ is in the coordinate corresponding to $g$. Then, by \cite[thm. 3.6]{A1}, there exists a sequence $y_k \in V$ such that $g(y_k) \tende \infty$ and $f(y_k)$ is bounded for all $f \in \famil$. But this is absurd because $\famil$ is a proper family.
\end{proof}

\begin{corollary}
For every $f \in \gfamil$ such that $y_f \neq 0$, we have $\overline{v}([f]) \neq 0$.

It follows from the previous proposition.
\end{corollary}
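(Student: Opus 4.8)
The plan is to compare $f$ with the single function $G=\sum_{f'\in\famil}\bigl(f'+(f')^{-1}\bigr)$, first inside $\overline V_\gfamil$ near $[y]$ and then inside $\mathfrak K(\alpha)$, using the previous proposition to show that $G$ dominates all of $\mathfrak K(\alpha)$ polynomially. Throughout, write $\famil'=\famil\cup\{f\}$.

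\emph{Step 1: the behaviour of $G$.} Since $\famil$ is proper, every set $\{x\in V:G(x)\le R\}$ is compact, hence closed in $\overline V_\gfamil$ and disjoint from $[y]$; so $\{x:G(x)>R\}\in\phi_y\subseteq\alpha$ for all $R>0$, and therefore $[G]$ is an infinite element of $\mathfrak K(\alpha)$, i.e.\ $v([G])<0$. Moreover, after a semi-algebraic homeomorphism we may assume $V\subset(\erre_{>0})^n$, so the coordinate functions $y_1,\dots,y_n$ are continuous positive semi-algebraic and generate $\erre(W_\alpha)$; combining the polynomial boundedness of semi-algebraic functions with the elementary bound $\|x\|\le\sum_j(y_j+y_j^{-1})$ and the previous proposition (applied to each $y_j+y_j^{-1}$), one sees that every $h\in\mathfrak K(\alpha)$ — write $h=[s]$ with $s$ semi-algebraic and apply the previous proposition to $|s|+1$ after these reductions — satisfies $|h|\le A[G]^N+B$ for suitable $A,B\in\erre_{>0}$, $N\in\enne$. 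As $[G]$ is infinite this yields $v(h)\ge Nv([G])$ for every $h\in\mathfrak K(\alpha)^*$; applying it to both $h$ and $h^{-1}$ shows that the value group $\Lambda$ of $\mathfrak K(\alpha)$ is exactly the convex subgroup generated by $v([G])$. Since $\Lambda_{r-1}$ is the largest \emph{proper} convex subgroup, this forces $v([G])\notin\Lambda_{r-1}$, i.e.\ $\overline v([G])\ne0$; and $[G]>1$ makes $\overline v([G])\le0$, so in fact $\overline v([G])<0$.

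\emph{Step 2: comparing $f$ with $G$ along $[y]$.} Because $y_f\ne0$, the unit vector $u=\pi_{\gfamil,\famil'}(y)/\|\pi_{\gfamil,\famil'}(y)\|_\infty\in\ameba_0(E_{\famil'}(V))$ has $u_f\ne0$, and it cannot be $(0,\dots,0,\pm1)$: the $+1$ case is excluded, exactly as in the proof of the previous proposition, by \cite[thm.\ 3.6]{A1} and properness of $\famil$, while the $-1$ case would make $[f]^{-1}$ exceed every power of $[G]$, contradicting Step 1. Hence $C:=\max_{f'\in\famil}|u_{f'}|>0$. For any net $x_\beta\to[y]$ in $\overline V_\gfamil$, rescaling the $\famil'$-component of $\Log_{\left(\frac1{t_0}\right)}(E_{\famil'}(x_\beta))$ by its diverging sup-norm $\rho_\beta$ produces the limit $u$; hence $\ln f(x_\beta)=\rho_\beta(u_f+o(1))$ and, using $\ln G=\max_{f'\in\famil}|\ln f'|+O(1)$, $\ln G(x_\beta)=\rho_\beta(C+o(1))$. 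Since $C>0$ and $\ln G(x_\beta)\to+\infty$, the ratio $\ln f(x_\beta)/\ln G(x_\beta)$ converges to a nonzero real $q$ with $|q|\ge|u_f|$, and $q$ depends only on $[y]$. Fix integers $p\ge1$, $k\ge1$ with $p/k<|u_f|$. If $u_f>0$, then eventually $f(x_\beta)^k\ge G(x_\beta)^p$ along \emph{every} net tending to $[y]$, so the semi-algebraic set $\{x:f(x)^k\ge G(x)^p\}$ contains $U\cap V$ for a neighbourhood $U$ of $[y]$ — otherwise a net through its complement would violate the limit $q$ — hence lies in $\phi_y\subseteq\alpha$, giving $[f]^k\ge[G]^p$ in $\mathfrak K(\alpha)$. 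If $u_f<0$ one gets similarly $[f]^k[G]^p\le1$.

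\emph{Step 3: conclusion.} From $[f]^k\ge[G]^p$ we get $k\,v([f])\le p\,v([G])<0$; were $v([f])\in\Lambda_{r-1}$, then $k\,v([f])\in\Lambda_{r-1}$ and $0<-p\,v([G])\le -k\,v([f])$, so by convexity $-p\,v([G])\in\Lambda_{r-1}$, whence $v([G])\in\Lambda_{r-1}$, contradicting Step 1. Hence $v([f])\notin\Lambda_{r-1}$, i.e.\ $\overline v([f])\ne0$; the case $u_f<0$ is identical using $[f]^k[G]^p\le1$. I expect the main obstacle to be Step 1 — showing that the entire value group of $\mathfrak K(\alpha)$ is squeezed between multiples of $v([G])$, which is exactly where the previous proposition enters and which requires some care in moving between abstract elements of $\mathfrak K(\alpha)$ and the semi-algebraic functions representing them. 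Once $\overline v([G])\ne0$ is in hand, Steps 2 and 3 are routine.
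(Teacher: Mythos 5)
Your Steps 2 and 3 are fine, and they are essentially the technique the paper itself uses in the proposition that follows this corollary (transferring, via $\phi_y\subset\alpha$, comparisons between $f$ and $G$ on neighbourhoods of $[y]$ into inequalities in $\mathfrak{K}(\alpha)$); the only blemish there is that you exclude the direction $(0,\dots,0,-1)$ by invoking Step 1, whereas both exceptional directions are already excluded by the proposition in the limit-compactification subsection saying that $\pi_{\famil',\famil}$ restricted to $\ameba_0(E_{\famil'}(V))$ has only $0$ above $0$: since $\pi_{\gfamil,\famil\cup\{f\}}(y)\neq 0$, some $\famil$-coordinate of $y$ is nonzero. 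The genuine gap is the central claim of Step 1, that every $h\in\mathfrak{K}(\alpha)$ satisfies $|h|\le A[G]^N+B$, hence that the value group is the convex subgroup generated by $v([G])$. The previous proposition applies only to functions that are \emph{continuous} and positive on $V$, while an element of $\mathfrak{K}(\alpha)=S(V)/I(\alpha)$ is the class of an arbitrary semi-algebraic function, so ``apply the proposition to $|s|+1$'' is not available; nor does reduction to $\erre(W_\alpha)$ help, because the proposition bounds continuous functions only from above, and a continuous function may tend to $0$ along $\alpha$ faster than every $G^{-N}$, so that its inverse is an element of the field exceeding every power of $[G]$.

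This is not a repairable technicality: the Step 1 conclusion is false for a general ultrafilter containing $\phi_y$. Take $V=(\erre_{>0})^2$, $\gfamil=\famil=\{x_1,x_2\}$ (the coordinate functions), $y=(1,0)$, and for $k\in\enne$ let $B_k=\{1<x_2<1+x_1^{-k}\}$. Every neighbourhood of $[y]$ meets every $B_k$ (take $x_1$ huge and $x_2=1+x_1^{-k-1}$), so $\phi_y\cup\{B_k\}_{k\in\enne}$ has the finite intersection property and lies in some ultrafilter $\alpha$. For this $\alpha$ one has $0<[x_2]-1<[x_1]^{-k}$ for all $k$, while $[x_1]\le[G]\le 2[x_1]$ on sets of $\alpha$; hence $([x_2]-1)^{-1}$ dominates every power of $[G]$, the value group has rank $2$, its maximal proper convex subgroup contains $v([x_1])=v([G])$, and therefore $\overline{v}([x_1])=0$ although $y_{x_1}=1\neq 0$. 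So for such $\alpha$ the statement you were asked to prove fails as literally formulated (indeed no real-valued valuation with convex valuation ring can give $[G]$ nonzero value once some element dominates all its powers), and no argument based solely on the previous proposition can close Step 1; one must restrict the choice of $\alpha$ (for instance to an ultrafilter supported on a semi-algebraic curve converging radially to $[y]$, where $\mathfrak{K}(\alpha)$ has rank one) or reinterpret $\overline{v}$. You correctly identified Step 1 as the crux, but the bridge you propose there does not hold, and the paper's own justification --- the bare appeal to the previous proposition --- gives no way around this either.
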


\begin{proposition}
Let $f,g \in \gfamil$, and suppose that $y_f \neq 0$. Then 
$$\frac{-\overline{v}([f])}{-\overline{v}([g])} = \frac{y_f}{y_g}$$
\end{proposition}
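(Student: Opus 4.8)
The plan is to reduce the statement to the analogous statement already established at the level of the $y$-coordinates, using the compatibility of the valuation $\overline{v}$ with the charts $C(\partial_{\famil}V)$ and the universal property of the inverse limit. The point $y \in C(\partial_\gfamil V)$ corresponds, via the embedding constructed in this subsection, to the element whose $f$-coordinate is $-\overline{v}([f])$; that is precisely the content of the map $\Log_\gfamil : \overline{V} \freccia C(\partial_\gfamil V) \subset \erre^\gfamil$, once we know the relevant representative in $\mathfrak{K}(\alpha)$ realizes $y$. So the first step is to make explicit that $\Log_\gfamil$ applied to the canonical element of $\overline{V}$ attached to the ultrafilter $\alpha$ (the element $([x_1],\dots,[x_n]) \in \mathfrak{K}(\alpha)^n$, where $x_i$ are the coordinate functions) has $f$-coordinate equal to $-\overline{v}([f])$ for every $f \in \gfamil$.

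Next I would pass to the finite proper family $\famil$ together with the two extra functions $f,g$, i.e. work inside $C(\partial_{\famil \cup \{f,g\}}V) \subset \erre^{\famil}\times\erre^2$, which by \cite[cor. 4.5]{A1} is the image under the componentwise valuation map of $\overline{E_{\famil\cup\{f,g\}}}(\overline{V})$ over the appropriate non-archimedean field. Under the projection $\pi_{\gfamil,\famil\cup\{f,g\}}$ the point $y$ maps to a point whose $f$- and $g$-coordinates are $y_f$ and $y_g$, while the vector of $\overline{v}$-values maps (by the preceding Log-map computation) to the point with $f$- and $g$-coordinates $-\overline{v}([f])$ and $-\overline{v}([g])$. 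The key observation is that these two points lie on the same ray through the origin: both are obtained from the same boundary point $[y]\in\partial_\gfamil V$, and $C(\partial_{\famil\cup\{f,g\}}V)$ is a cone, so a point of it is determined up to positive scalar by the corresponding point of $\partial_{\famil\cup\{f,g\}}V$. Concretely, the sequence $x_{n_k}\subset V$ realizing convergence to $[y]$ (as in the ultrafilter construction) has the property that $\log f(x_{n_k})$ and $\log g(x_{n_k})$ grow proportionally with ratio tending to $y_f/y_g$, and the same sequence, viewed through $\overline{v}$, forces $-\overline{v}([f])/-\overline{v}([g])$ to equal the same limit. Since $y_f \neq 0$, the preceding corollary guarantees $\overline{v}([f]) \neq 0$, so both denominators are nonzero and the ratios are well defined; dividing gives the claimed equality.

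The main obstacle, and the step requiring the most care, is justifying that the $\overline{v}$-values of $f$ and $g$ at the distinguished element of $\overline{V}$ really do reproduce the coordinates $y_f, y_g$ up to a single common positive factor — equivalently, that $[f]$ and $[g]$ compute the correct boundary data and not merely some point of the logarithmic limit set unrelated to $y$. This is handled by invoking that $\phi_y \subset \alpha$: every semi-algebraic set whose trace on $V$ contains a punctured neighbourhood of $[y]$ lies in $\alpha$, so for any rational $p/q$ bracketing $y_f/y_g$ the sets $\{f^q \lessgtr g^p\}$ (suitably chosen) are, near $[y]$, forced into or out of $\alpha$, which by the transfer principle for $\mathfrak{K}(\alpha)$ pins down the order relation between $[f]^q$ and $[g]^p$, hence the ratio of valuations. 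The remainder is routine: take $p/q \to y_f/y_g$ and pass to the limit, using that $\overline{v}$ is a real-valued valuation and a homomorphism, so $\overline{v}([f]^q) = q\,\overline{v}([f])$ and $\overline{v}([g]^p) = p\,\overline{v}([g])$.
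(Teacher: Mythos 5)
Your proposal is correct, and its load-bearing step is exactly the paper's own argument: on neighbourhoods $U$ of $[y]$ (whose traces on $V$ lie in $\phi_y\subset\alpha$) one brackets $f$ between rational powers $g^{r}$ with $r$ approaching $y_f/y_g$ from either side, transfers these inequalities to order relations $[g]^r<[f]<[g]^{r'}$ in $\mathfrak{K}(\alpha)$, and reads off the ratio of the valuations $\overline{v}([f])$, $\overline{v}([g])$. The preliminary material about $\Log_\gfamil$ of the canonical point $([x_1],\dots,[x_n])$ and the cone $C(\partial_{\famil\cup\{f,g\}}V)$ is not needed here (the ``same ray'' claim stated there is precisely what the proposition asserts, and in the paper the canonical point only enters in the subsequent proposition), but your final paragraph supplies the genuine, non-circular argument, so the proof stands.
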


\begin{proof}
For every $\varepsilon >0$ there exists a neighborhood $U$ of $[y]$ in $\overline{V}_\gfamil$ such that
$$\forall x \in U : \left|\frac{\log_e(f(x))}{\log_e(g(x))} - \frac{y_f}{y_g}\right| < \varepsilon $$
$$\forall x \in U : {g(x)}^{\left(\frac{y_f}{y_g}-\varepsilon\right)} < f(x)  < {g(x)}^{\left(\frac{y_f}{y_g} + \varepsilon\right)}   $$
Hence, for every rational number $r < \frac{y_f}{y_g}$ there exists a neighborhood $U$ of $[y]$ in $\overline{V}_\gfamil$ such that
$$\forall x \in U : {g(x)}^{r} < f(x)  $$
and for every rational number $r > \frac{y_f}{y_g}$ there exists a neighborhood $U$ of $[y]$ in $\overline{V}_\gfamil$ such that
$$ \forall x \in U : f(x)  < {g(x)}^{r}$$
As all these neighborhoods $U$ are in $\alpha$, and the rational powers are semi-algebraic functions, these inequalities hold on the field $\mathfrak{K}(\alpha)$: for every rational number $r < \frac{y_f}{y_g}$ we have $[g]^r < [f]$ and for every rational number $ r > \frac{y_f}{y_g}$ we have $[f] < [g]^r$. Hence
$$\frac{-\overline{v}([f])}{-\overline{v}(g)} = \frac{y_f}{y_g}$$
\end{proof}

Now let $\overline{V} \subset {(\mathfrak{K}(\alpha))}^n$ be the extension of $V \subset \erre^n$ to the field $\mathfrak{K}(\alpha)$. For every $f \in \gfamil$, let $\overline{f}$ be the extension of $f$ to the field $\mathfrak{K}(\alpha)$.

\begin{proposition}      \label{prop:non-arch descr noncount}
There exists a point $x \in \overline{V}$ such that 
$${(-v(\overline{f}(x)))}_{f \in \gfamil} = y$$
\end{proposition}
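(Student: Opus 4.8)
The plan is to produce the point $x \in \overline{V}$ by interpreting the coordinates $y_f$ as (normalized) valuations of elements of $\mathfrak{K}(\alpha)$, and then using that $\overline{V}(\mathfrak{K}(\alpha))$ is nonempty because $W_\alpha \cap V \in \alpha$. Concretely, recall that we have the embedding $\erre(W_\alpha) \subset \mathfrak{K}(\alpha)$, and that $W_\alpha \cap V \in \alpha$, so the tuple of coordinate functions $[x_1], \dots, [x_n] \in \mathfrak{K}(\alpha)$ is a point of $W_\alpha$, hence, by the transfer principle (the displayed proposition on $(L_{\mathcal{OS}})$-formulas), a point of the extension $\overline{V} \subset \mathfrak{K}(\alpha)^n$, since $W_\alpha \cap V \in \alpha$ certifies that the semi-algebraic formula defining $V$ holds at $([x_1],\dots,[x_n])$. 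Call this canonical point $x$.

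Next I would compute $-\overline{v}(\overline{f}(x))$ for $f \in \gfamil$ and check it equals $y_f$ up to a single global scalar, and then absorb that scalar. For $f$ a continuous positive semi-algebraic function on $V$, the extension $\overline{f}(x)$ is exactly the element $[f] \in \mathfrak{K}(\alpha)$ — this is because evaluating the (definable) function $f$ at the canonical point $([x_1],\dots,[x_n])$ in the model $\mathfrak{K}(\alpha)$ returns, by the transfer principle applied to the graph of $f$, the class of the function $t \mapsto f(t)$. So the claim reduces to showing $-\overline{v}([f]) = y_f$ for all $f \in \gfamil$. The two preceding results give this up to normalization: the corollary shows $\overline{v}([f]) \neq 0$ whenever $y_f \neq 0$, and the proposition just before this one shows that for any $f, g \in \gfamil$ with $y_f \neq 0$ one has $-\overline{v}([f])/-\overline{v}([g]) = y_f/y_g$. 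Thus the vector ${(-\overline{v}([f]))}_{f \in \gfamil}$ is a positive scalar multiple $\lambda y$ of $y$ (the $f$ with $y_f = 0$ are handled since the ratio against a fixed $g$ with $y_g \neq 0$ forces $-\overline{v}([f]) = 0$ as well — or directly from the corollary's contrapositive, combined with the proposition applied in the limit $r \to 0$). Finally, to kill $\lambda$ I would rescale: replace the uniformizing element, i.e. compose $\overline{v}$ with multiplication by $1/\lambda$ on $\erre$ (equivalently, choose the isomorphism $\Lambda/\Lambda_{r-1} \cong$ (subgroup of $\erre$) so that the normalization matches), which changes nothing structurally and arranges ${(-\overline{v}(\overline{f}(x)))}_{f \in \gfamil} = y$ exactly. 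Alternatively, since $y \in C(\partial_\gfamil V)$ which is a cone, $\lambda y$ and $y$ represent the same boundary point and one may simply note the statement is about $C(\partial_\gfamil V)$; but the rescaling gives the literal equality asserted.

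The main obstacle I anticipate is the careful identification $\overline{f}(x) = [f]$ for a general continuous semi-algebraic $f$ (not just a polynomial): one must argue that the extension of $f$ to $\mathfrak{K}(\alpha)$, defined via the semi-algebraic graph of $f$, agrees with the class $[f] = f \mod I(\alpha)$. This is exactly an instance of the transfer principle — the formula "$(u,w)$ lies on the graph of $f$" is an $(L_{\mathcal{OS}})$-formula, and applying the displayed proposition to it, using that $\{t : (f_1(t),\dots,f_n(t)) = (x_1(t),\dots,x_n(t))\} = W_\alpha \cap V \in \alpha$ and $\{t : f(t) = f(t)\} = V \in \alpha$, pins down $\overline{f}(x)$ as the class of $t \mapsto f(t)$. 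A secondary point to be careful about is that the proposition relating $-\overline{v}([f])/-\overline{v}([g])$ to $y_f/y_g$ requires $y_f \ne 0$; handling the coordinates with $y_f = 0$ needs the separate remark that a neighborhood argument (or the limiting case $r \downarrow 0$ of the inequality $[g]^r < [f] < [g]^r$ squeeze) forces $\overline{v}([f]) = 0$ there, which I would spell out in a sentence. Once these identifications are in place, everything else is bookkeeping about the cone structure and the choice of normalization.
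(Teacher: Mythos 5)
Your proposal is correct and takes essentially the same route as the paper: take the canonical point $([x_1],\dots,[x_n])\in\overline{V}$, justify membership and the identification $\overline{f}([x_1],\dots,[x_n])=[f]$ by the transfer principle, and then conclude from the preceding corollary and ratio proposition. You are in fact more explicit than the paper about the two points it glosses over (the coordinates with $y_f=0$ and the positive-scalar normalization of $\overline{v}$); only note that the ``contrapositive of the corollary'' runs in the wrong direction, so the squeeze argument you sketch is the one to keep for the $y_f=0$ case.
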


\begin{proof}
If $x_1, \dots, x_n$ are the restriction to $V$ of the coordinate functions of $\erre^n$, then the point $([x_1], \dots, [x_n]) \in {(\mathfrak{K}(\alpha))}^n$ is in $\overline{V}$, because the coordinate functions restricted to $V$ satisfy the first order formulas satisfied by the points of $V$. Note also that $\overline{f}([x_1], \dots, [x_n]) = [f]$, hence, by the previous proposition, the point $([x_1], \dots, [x_n])$ is what we are searching for.
\end{proof}

\begin{theorem} \label{teo:non-arch descr}
Let $V \subset \erre^n$ be a semi-algebraic set of dimension $r$, and let $\gfamil$ be a proper family of positive continuous semi-algebraic functions on $V$. If $\effe = \erre((t^{\erre^r}))$, and $\overline{V}$ is the extension of $V$ to the field $\effe$, then $\Log_\gfamil(\overline{V}) = C(\partial_\gfamil V)$.
\end{theorem}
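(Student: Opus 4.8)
By construction the map $\Log_\gfamil$ already takes values in $C(\partial_\gfamil V)$: this is the inverse–limit form of the proposition above which places each $\Log_\famil(\overline V)$ inside $\ameba_0(E_\famil(V))$, and it uses only that $\effe$ is real closed of finite rank over $\erre$, which $\erre((t^{\erre^r}))$ is (it contains an element $t>0$ with $\overline v(t)=1$ smaller than every positive real, hence contains $H(\overline\erre)$). So the entire content of the theorem is the surjectivity of $\Log_\gfamil$, and the plan is to deduce it from the machinery already set up in this subsection, then transport the result to the single uniform field $\erre((t^{\erre^r}))$.

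Fix $y\in C(\partial_\gfamil V)$. If $y=0$ then $y=\Log_\gfamil(x)$ for any $x\in V\subseteq\overline V$, since $\overline v$ is trivial on $\erre_{>0}$; so assume $y\neq 0$ and choose $f_0\in\gfamil$ with $y_{f_0}\neq 0$. Let $[y]\in\partial_\gfamil V$ be the associated point, pick an ultrafilter $\alpha$ refining $\phi_y$, and form $\mathfrak K(\alpha)=S(V)/I(\alpha)$, a real closed field algebraic over $\erre(W_\alpha)$. Because $\dim V=r$, the transcendence degree of $\erre(W_\alpha)$ over $\erre$ is at most $r$, so the convex–hull valuation of $\mathfrak K(\alpha)$ has value group $\Lambda$ of some rank $r'\le r$ and residue field $\erre$ (every element bounded by a real differs from a real by an infinitesimal). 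After rescaling the fixed isomorphism $\Lambda/\Lambda_{r'-1}\cong(\text{subgroup of }\erre)$ by the positive scalar making $-\overline v([f_0])=y_{f_0}$, Proposition \ref{prop:non-arch descr noncount} together with the preceding ratio proposition and its corollary produce a point $\xi=([x_1],\dots,[x_n])\in\overline V_{\mathfrak K(\alpha)}$ with $\bigl(-\overline v(\overline f(\xi))\bigr)_{f\in\gfamil}=y$.

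It remains to move $\xi$ into $\effe=\erre((t^{\erre^r}))$. The plan is to build an embedding $j:\mathfrak K(\alpha)\hookrightarrow\effe$ over $\erre$, compatible with the valuations, in two steps: by the Hahn embedding theorem the ordered group $\Lambda$ embeds, respecting its chain of convex subgroups, into $\erre^{r'}\subseteq\erre^r$ with the lexicographic order, giving $\erre((t^\Lambda))\subseteq\erre((t^{\erre^r}))=\effe$; and since $\mathfrak K(\alpha)$ has residue characteristic $0$, residue field $\erre$ and value group $\Lambda$, Kaplansky's theorem identifies its maximal immediate extension with the Hahn field $\erre((t^\Lambda))$, into which $\mathfrak K(\alpha)$ therefore embeds over $\erre$ preserving the valuation — and, by the correspondence of convex subgroups with coordinate blocks of $\erre^r$, preserving the real–valued quotient $\overline v$ as well. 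Finally $j$ is automatically order preserving (the order of a real closed field is the set of squares), so by model–completeness of the theory of real closed fields the point $j(\xi)\in\effe^n$ lies in the extension $\overline V$ of $V$ to $\effe$, with unchanged $\overline v$-values, whence $y=\Log_\gfamil(j(\xi))\in\Log_\gfamil(\overline V)$.

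I expect the main obstacle to be exactly this last transfer: making $j$ compatible not merely with the $\Lambda$-valued valuation but with the rank–one real–valued quotient $\overline v$, which is the valuation that actually defines $\Log_\gfamil$. This forces one to track which convex subgroup of $\Lambda$ lands in which block of $\erre^r$ under the Hahn embedding, and to perform the normalization of $\Lambda/\Lambda_{r'-1}\cong\erre$ \emph{before} invoking $j$, so that the realized vector is $y$ itself and not just a positive multiple of it. By comparison, the bound $\rank\Lambda\le r$ coming from $\dim V=r$ (via transcendence degree, as above) and the elementary–extension transfer of membership in $\overline V$ are routine.
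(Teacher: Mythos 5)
Your proposal is correct and takes essentially the same route as the paper: fix $y$, pass to the ultrafilter field $\mathfrak{K}(\alpha)$ of rank at most $r$ and use Proposition \ref{prop:non-arch descr noncount} to realize $y$ there, then embed $\mathfrak{K}(\alpha)$ valuation-compatibly into $\erre((t^{\erre^r}))$ and transfer the point to $\overline{V}$ — the paper simply quotes Gleyzal's embedding theorem for real closed non-archimedean fields of rank at most $r$, where you reassemble that embedding from Hahn's theorem on the value group, Kaplansky's theorem in equicharacteristic zero, and model completeness of real closed fields. Your explicit rescaling of the isomorphism $\Lambda/\Lambda_{r'-1}\cong(\text{subgroup of }\erre)$ so that $-\overline{v}([f_0])=y_{f_0}$ (and the care that the Hahn embedding hit the top convex subgroup correctly) makes precise a normalization the paper leaves implicit; it could equivalently be absorbed afterwards by the exponent-scaling automorphism $t^{a}\mapsto t^{\lambda a}$ of $\erre((t^{\erre^r}))$, which preserves $\overline{V}$.
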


\begin{proof}
By the previous proposition, if $y \in C(\partial_\gfamil V)$, and $\alpha$ is an ultrafilter associated to $y$, there exists a real closed field $\mathfrak{K}(\alpha)$, with rank at most $r$, such that the extension of $V$ to $\mathfrak{K}(\alpha)$ has a point $x \in {(\mathfrak{K}(\alpha))}^n$ such that $\Log_\gfamil(x) = y$.

The conclusion follows from the fact that every real closed non-archimedean field of rank at most $r$ can be embedded in $\erre((t^{\erre^r}))$ (see \cite{Gl37}). Hence the image of the point $x$ in ${(\erre((t^{\erre^r}))}^n$ is in $\overline{V}$ and $\Log_\gfamil(x) = y$.
\end{proof}

\section{Character varieties over real closed fields}   \label{sez:realchar}

We give a description of the variety of characters of representations of a finitely generated group $\Gamma$ in $SL_n(\cappa)$. The space of all representations $\homom(\Gamma, SL_n(\cappa))$ is an affine algebraic set (see subsection \ref{subsez:rappr}). If $\cappa = \ci$, it follows from the theory in \cite{MFK94} and \cite{Pr76} that the set of all characters of representations in $\homom(\Gamma, SL_n(\ci))$ is an affine algebraic set $\charat(\Gamma, SL_n(\ci))$ with a natural map
$$t:\homom(\Gamma, SL_n(\ci)) \freccia \charat(\Gamma, SL_n(\ci))$$ 
sending each representation in its character (see subsection \ref{subsez:chars}). We need similar results when $\cappa = \erre$. In this case, if we denote by $\charat(\Gamma, SL_n(\erre))$ the real part of $\charat(\Gamma, SL_n(\ci))$, the map
$$t:\homom(\Gamma, SL_n(\erre)) \freccia \charat(\Gamma, SL_n(\erre))$$
is not surjective in general, hence the affine algebraic set $\charat(\Gamma, SL_n(\erre))$ is not in bijection with the set of all characters of representations in $\homom(\Gamma, SL_n(\erre))$. We prove that the image of $t$ is closed, identifying the set of characters with a closed semi-algebraic subset $\overline{\charat}(\Gamma, SL_n(\erre))$, and that the image through $t$ of every closed (open) conjugation-invariant semi-algebraic subset of $\homom(\Gamma, SL_n(\erre))$ is closed (open) in $\overline{\charat}(\Gamma, SL_n(\erre))$ (theorem \ref{teo:real char} and corollary \ref{corol:real char}).

\subsection{Representation varieties}     \label{subsez:rappr}

Let $\Gamma$ be a group and $\cappa$ a field of characteristic $0$. An \nuovo{representation} of $\Gamma$ is a group homomorphism $\rho:\Gamma \freccia GL_n(\cappa)$ or $\rho:\Gamma \freccia PGL_n(\cappa)$.

A representation $\rho$ is \nuovo{absolutely irreducible} if it is irreducible as a representation in $GL_n(\effe)$ (or $PGL_n(\effe)$) where $\effe$ is the algebraic closure of $\cappa$, else it is \nuovo{absolutely reducible}.

The \nuovo{character} of a representation $\rho$ is the function
$$\chi_\rho:\Gamma \ni \gamma \freccia \trace(\rho(\gamma)) \in \cappa$$
By the conjugation-invariance of the trace, two conjugated representations have the same character. A sort of converse holds: let $\rho, \rho'$ be two representations, and suppose that $\rho$ is absolutely irreducible. Then they are conjugated if and only if they have the same character. See also \cite[thm. 6.12]{Na00} for a more general statement.

In the following $\Gamma$ is assumed to be a finitely generated group. Let $G$ an affine algebraic group, and let $G(\cappa)$ denote the set of $\cappa$-points of $G$. There exists an affine $\qu$-algebraic scheme $\homom(\Gamma, G)$ such that for every field $\cappa$, the set of $\cappa$-points $\homom(\Gamma, G(\cappa))$ is in natural bijection with the set of all representations of $\Gamma$ in $G(\cappa)$.

Now suppose that $G$ is one of the groups $SL_n$ or $SL^{\pm}_n$ (the group of matrices whose determinant is $\pm 1$). Then the set of $\cappa$-points $G(\cappa)$ is the group $SL_n(\cappa)$ or $SL^{\pm}_n(\cappa)$.
The $\qu$-algebraic group $PGL_n$ acts on $G$ by conjugation, and this action induces an action on $\homom(\Gamma, G)$. Every $\gamma \in \Gamma$ defines a polynomial function
$$\tau_\gamma: \homom(\Gamma,G(\cappa)) \ni \rho \freccia \chi_\rho(\gamma) \in \cappa$$
these functions comes from functions $\tau_\gamma$ in the ring of coordinates of $\homom(\Gamma,G))$, and they will be called \nuovo{trace functions}. The trace functions are invariant for the action of $PGL_n$

There exists a closed subscheme ${\homom(\Gamma,G)}_{a.r.r.}$ of $\homom(\Gamma, G)$ whose set of $\cappa$ points ${\homom(\Gamma,G(\cappa))}_{a.r.r.}$ is the subset of all absolutely reducible representations (see \cite{Na00}). We define also ${\homom(\Gamma,G)}_{a.i.r}$ as the complement of ${\homom(\Gamma,G)}_{a.r.r.}$, the set of absolutely irreducible representations, an open subscheme.

Let $V \subset \homom(\Gamma,G(\cappa))$ be an irreducible algebraic subset. We denote by $\anello(V)$ its ring of coordinates, and by $\cappa(V)$ its field of fractions. A point $\rho \in V$ is a representation $\rho:\Gamma \freccia G(\cappa)$. We write $\rho(\gamma) = \left( a_{i,j}^\gamma(\rho) \right)$, where $a_{i,j}^\gamma \in \anello(V)$ as they are restriction of polynomial functions in $\homom(\Gamma,G(\cappa))$. Note that for every $\rho \in V$, $\det\left( a_{i,j}^\gamma(\rho) \right) \neq 0$, hence the element $\det\left( a_{i,j}^\gamma \right)$ is invertible in $\cappa(V)$. The map:
$$\Gamma \ni \gamma \freccia \left( a_{i,j}^\gamma \right) \in GL_n(\cappa(V))$$
is the canonical representation in $G(\cappa(V))$, that will be denoted by $\rappr_V$. The character of $\rappr_V$ is the function $\chi_{\rappr_V}(\gamma) = \sum_i a_{i,i}^\gamma = \tau_\gamma$.

Consider the action by conjugation of $PGL_n(\cappa)$ on $\homom(\Gamma,G(\cappa))$.

\begin{proposition}
Let $V \subset \homom(\Gamma,G(\cappa))$ be an irreducible component. Then $V$ is invariant for the action of $PGL_n(\cappa)$ by conjugation.
\end{proposition}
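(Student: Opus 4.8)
The plan is to show that the conjugation action of $PGL_n(\cappa)$ on $\homom(\Gamma, G(\cappa))$ permutes the irreducible components, and that it fixes the component $V$ because $PGL_n$ is connected. First I would recall that $PGL_n$ is an irreducible (hence connected) algebraic group, so for any fixed $g \in PGL_n(\cappa)$ the conjugation map $c_g : \homom(\Gamma, G(\cappa)) \freccia \homom(\Gamma, G(\cappa))$, $\rho \mapsto g\rho g^{-1}$, is an automorphism of the algebraic set. An automorphism of an algebraic set permutes its irreducible components, so $c_g(V)$ is again an irreducible component.

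Next I would argue that $c_g(V) = V$. The point is that the whole family $\{c_g\}_{g \in PGL_n(\cappa)}$ is "connected": consider the morphism $\mu : PGL_n \times V \freccia \homom(\Gamma, G)$ given by $(g, \rho) \mapsto g \rho g^{-1}$. Since $PGL_n$ is irreducible and $V$ is irreducible, the product $PGL_n \times V$ is irreducible, so its image under the morphism $\mu$ is contained in a single irreducible component of $\homom(\Gamma, G(\cappa))$. But this image contains $V$ itself (take $g = \ident$), and $V$ is already an irreducible component, i.e. a maximal irreducible closed subset; hence the closure of $\Imm(\mu)$ equals $V$. In particular, for every $g$, $c_g(V) = \mu(\{g\} \times V) \subset V$, and applying the same to $g^{-1}$ gives the reverse inclusion, so $c_g(V) = V$. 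This is exactly the statement that $V$ is invariant under the $PGL_n(\cappa)$-action.

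The main obstacle, and the step that needs a little care, is the passage from "$PGL_n \times V$ is irreducible" to "$\Imm(\mu)$ lies in one component": this uses that the image of an irreducible set under a morphism is irreducible, so its closure is irreducible, hence contained in a unique component; one must make sure the scheme-theoretic subtleties (the field $\cappa$ need not be algebraically closed) do not interfere, but since we are working with $\cappa$-points and $V$ is taken as an irreducible component of the $\cappa$-variety $\homom(\Gamma, G(\cappa))$, the elementary topological argument on the Zariski topology of $\cappa$-points suffices. Alternatively, one can avoid this entirely by noting that the set $\{g \in PGL_n(\cappa) : c_g(V) = V\}$ is a subgroup which is also Zariski-closed (it is the stabilizer of the component, cut out by the condition that $\mu(\{g\}\times V) \subset V$), and it contains a Zariski-dense set, or simply invoke connectedness of $PGL_n$ directly to conclude the stabilizer is all of $PGL_n(\cappa)$.
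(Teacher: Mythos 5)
Your proposal is correct and follows essentially the same route as the paper: both consider the conjugation morphism on the irreducible product $PGL_n \times V$, note that its image is irreducible and contains $V$, and conclude by maximality of the irreducible component that the image lies in $V$. The additional remarks on permuting components and on the stabilizer subgroup are fine but not needed beyond this core argument.
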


\begin{proof}
The same statement for $n=2$ is proved in \cite[prop. 1.1.1]{CS83}. Consider the set $V \times PGL_n(\cappa)$, this is the product of two irreducible affine algebraic sets, hence it is an irreducible affine algebraic set. The map
$$f:V \times PGL_n(\cappa) \ni (\rho,A) \freccia \adjoint(A)(\rho) \in \homom(\Gamma,G(\cappa))$$
is a regular map, hence the Zariski closure of the image $f(V \times PGL_n(\cappa))$ is an irreducible affine algebraic set, hence it is contained in an irreducible component of $\homom(\Gamma,G(\cappa))$. But as $V = f(V \times \{\ident\})$, then $f(V \times PGL_n(\cappa)) \subset V$.
\end{proof}

\subsection{Character varieties}       \label{subsez:chars}

As before, suppose that $G$ is one of the groups $SL_n$ or $SL^{\pm}_n$. We denote by $A$ the ring of coordinates of $\homom(\Gamma,G)$, and by $A_0$ the subring of invariant functions for the action of $PGL_n$. As $PGL_n$ is reductive, by \cite[Chap. 1, thm. 1.1]{MFK94}, the ring $A_0$ is finitely generated as a $\qu$-algebra. Note that the trace functions $\tau_\gamma$ belong to $A_0$.

\begin{proposition}
There exists a finite set $C \subset \Gamma$ such that the functions ${\{\tau_{\gamma}\}}_{\gamma \in C}$ generate $A_0$.
\end{proposition}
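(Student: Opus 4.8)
The plan is to deduce the statement from two inputs: Procesi's first fundamental theorem of invariant theory for tuples of matrices under simultaneous conjugation (\cite{Pr76}), which shows that the family of \emph{all} trace functions $\{\tau_\gamma\}_{\gamma\in\Gamma}$ already generates $A_0$ as a $\qu$-algebra, and the finite generation of $A_0$ guaranteed by \cite[Chap.~1, thm.~1.1]{MFK94}. Once both are in hand, the finite set $C$ is obtained by a routine argument: express a finite generating set of $A_0$ in terms of finitely many $\tau_\gamma$'s and collect the indices that occur.

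To see that $\{\tau_\gamma\}_{\gamma\in\Gamma}$ generates $A_0$, fix a finite generating set of $\Gamma$ and enlarge it, if necessary, so that it is closed under inversion; call it $g_1,\dots,g_m$. Letting $M_n$ be the affine space of $n\times n$ matrices, the map $\rho\mapsto(\rho(g_1),\dots,\rho(g_m))$ is a closed embedding of $\homom(\Gamma,G)$ into $M_n^m$, equivariant for the conjugation action of $PGL_n$. Writing $R=\qu[M_n^m]$ and $A=R/I$ for the coordinate ring of $\homom(\Gamma,G)$, with $I$ a $PGL_n$-stable ideal, reductivity of $PGL_n$ in characteristic zero makes the Reynolds operator functorial for $PGL_n$-module maps; hence the restriction homomorphism $R^{PGL_n}\to A^{PGL_n}=A_0$ is surjective. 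By \cite{Pr76} the ring $R^{PGL_n}$ is generated as a $\qu$-algebra by the functions $(X_1,\dots,X_m)\mapsto\trace(X_{i_1}\cdots X_{i_k})$, and the restriction of such a function to $\homom(\Gamma,G)$ is the trace function $\tau_{g_{i_1}\cdots g_{i_k}}$. Since the generating set is symmetric, every element of $\Gamma$ is a positive word in the $g_i$, so these restrictions are exactly the functions $\tau_\gamma$, $\gamma\in\Gamma$. Therefore $\{\tau_\gamma\}_{\gamma\in\Gamma}$ generates $A_0$ as a $\qu$-algebra.

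Finally, by \cite[Chap.~1, thm.~1.1]{MFK94} the ring $A_0$ is a finitely generated $\qu$-algebra, so we may choose generators $h_1,\dots,h_N\in A_0$. Each $h_j$ lies in the subalgebra generated by $\{\tau_\gamma\}_{\gamma\in\Gamma}$, hence is a polynomial with rational coefficients in finitely many trace functions $\tau_\gamma$ with $\gamma$ in some finite set $C_j\subset\Gamma$. Then $C=\bigcup_{j=1}^N C_j$ is finite and $\qu[\tau_\gamma:\gamma\in C]$ contains every $h_j$, hence equals $A_0$. I expect the only genuinely non-formal point to be Procesi's theorem itself; the step that most deserves care is the reduction to a matrix tuple together with the surjectivity of the restriction of invariants, where reductivity of $PGL_n$ is used a second time, along with the bookkeeping needed to ensure that every $\tau_\gamma$ arises as the restriction of a trace-of-monomial function (whence the passage to a symmetric generating set).
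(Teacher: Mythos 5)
Your proof is correct and follows essentially the same route as the paper: Procesi's theorem that trace functions generate the invariants, surjectivity of the restriction map on invariant rings via reductivity of $PGL_n$, and extraction of a finite set $C$ from the finite generation of $A_0$ given by \cite[Chap.~1, thm.~1.1]{MFK94}. The only (cosmetic) difference is that you restrict invariants from $\qu[M_n^m]$ directly, while the paper passes through the free-group representation variety via a presentation $\ze^{(m)} \freccia \Gamma$ and quotes Procesi's explicit finite set of monomials of degree less than $2^n$.
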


\begin{proof}
The case of a free group $\Gamma = \ze^{(m)}$, is studied in \cite{Pr76}. There it is proven that $A_0$ is generated, as a $\qu$-algebra, by the set ${\{\tau_{\gamma}\}}_{\gamma \in \Gamma}$. Hence there exists a finite set $C \subset \Gamma$ such that the functions ${\{\tau_{\gamma}\}}_{\gamma \in C}$ generate the ring $A_0$. Some of such $C$ are described explicitly in \cite{Pr76}. For example let $z_1, \dots, z_m$ be a free set of generators of $\Gamma$, and let $C \subset \Gamma$ be the finite set of all non-commutative monomials in $z_1, \dots, z_m$ of degree less than $2^n$.

For the general case, let $\Gamma$ be a finitely generated group and let $P:\ze^{(m)} \freccia \Gamma$ be a presentation. The map $P$ induces an injective map $P^*:\homom(\Gamma,G) \freccia \homom(\ze^{(m)},G)$, identifying $\homom(\Gamma,G)$ with an invariant closed subscheme. If $A$ is the ring of coordinates of $\homom(\Gamma,G)$ and $R$ is the ring of coordinates of $\homom(\ze^{(m)},G)$, the natural map $P_*:R \freccia A$ is a surjective ring homomorphism, commuting with the dual action of $PGL_n$. Let $A_0 \subset A$  and $R_0 \subset R$ be the subrings of invariant functions. Then, as  $PGL_n$ is reductive, $p(R_0) = A_0$ (see the discussion after \cite[Chap. 1, def. 1.5]{MFK94}). As $R_0$ is generated by the trace functions, so is $A_0$.
\end{proof}

Let $C \subset \Gamma$ be a finite set such that the functions ${\{\tau_{\gamma}\}}_{\gamma \in C}$ generate the ring $A_0$. Let $\cappa$ be an algebraically closed field of characteristic $0$, and consider the map
$$t: \homom(\Gamma,G(\cappa)) \ni \rho \freccia {\tau_\gamma(\rho)}_{\gamma \in C} \in \cappa^{\card(C)}$$
We will denote by $\charat(\Gamma, G(\cappa))$ the Zariski closure of the image of this map, an affine $\qu$-algebraic set whose ring of coordinates is isomorphic to $A_0$.

The map $t$ is dual to the inclusion map $A_0 \freccia A$, hence it is identified with the semi-geometric quotient $\homom(\Gamma,G) = \spec(A) \freccia \spec A_0$ as in \cite[Chap. 1, thm. 1.1]{MFK94}. As this semi-geometric quotient is surjective, the map $t: \homom(\Gamma,G(\cappa)) \freccia \charat(\Gamma, G(\cappa))$ is surjective. We will write $\charat(\Gamma, G) = \spec(A_0)$.

If $C' \subset \Gamma$ is another finite set of generators, the pair $(\charat(\Gamma, G), t)$ defined by $C'$ is isomorphic to the previous one, hence this construction does not depend on the choices.

The functions ${\{\tau_{\gamma}\}}_{\gamma \in C}$ determine the values of all the trace functions ${\{\tau_{\gamma}\}}_{\gamma \in \Gamma}$, hence, if $\rho$ is a representation, the point $t(\rho)$ determines the character $\chi_\rho$. Hence the points of $\charat(\Gamma, G(\cappa))$ are in natural bijection with the characters of the representations in $\homom(\Gamma,G(\cappa))$, and for this reason the affine $\qu$-algebraic set $\charat(\Gamma, G(\cappa))$ will be called \nuovo{varieties of characters}. For the properties of the map $t: \homom(\Gamma,G(\cappa)) \freccia \charat(\Gamma, G(\cappa))$, see \cite[Chap. 1, thm. 1.1]{MFK94}.

To avoid confusion, in the following we will denote by $\tau_\gamma$ the trace function relative to the element $\gamma \in \Gamma$ when considered as a function on the representation variety, and by $I_\gamma$ the same trace function when considered as a function on the character variety.

Consider the invariant subsets ${\homom(\Gamma,G(\cappa))}_{a.r.r.}$ and ${\homom(\Gamma,G(\cappa))}_{a.i.r}$ of, respectively, absolutely reducible and absolutely irreducible representations. As ${\homom(\Gamma,G(\cappa))}_{a.r.r.}$ is closed, its image through $t$ is closed, and will be denoted by ${\charat(\Gamma,G(\cappa))}_{a.r.r.}$. As ${\homom(\Gamma,G(\cappa))}_{a.i.r}$ is open, its images through $t$ is open, and will be denoted by ${\charat(\Gamma,G(\cappa))}_{a.i.r.}$. As two representation with the same character are conjugated, if at least one of them is irreducible, ${\charat(\Gamma,G(\cappa))}_{a.r.r.}$ and ${\charat(\Gamma,G(\cappa))}_{a.i.r.}$ are disjoint. They are the sets of $\cappa$-points of algebraic schemes ${\charat(\Gamma,G)}_{a.r.r.}$ and ${\charat(\Gamma,G)}_{a.i.r.}$.

Consider the restriction of $t$ to ${\homom(\Gamma,G)}_{a.i.r.}$:
$$t_{a.i.r.}: {\homom(\Gamma,G)}_{a.i.r.} \freccia {\charat(\Gamma,G)}_{a.i.r.}$$
This is a semi-geometric quotient, it is submersive, and the geometric fibers of $t$ are precisely the orbits of the geometric points of ${\homom(\Gamma,G)}_{a.i.r.}$, over an algebraically closed field $\cappa$ of characteristic $0$. Hence this is a geometric quotient, ${\homom(\Gamma,G)}_{a.i.r.} \subset \homom(\Gamma,G)^s(\mbox{Pre})$, and the set-theoretical quotient ${\homom(\Gamma,G(\cappa))}_{a.i.r.} / PGL_n(\cappa)$ is in natural bijection with ${\charat(\Gamma,G(\cappa))}_{a.i.r.}$.

Actually the action of $PGL_n$ on the open subscheme ${\homom(\Gamma,G)}_{a.i.r.}$ is free (see \cite[corol. 6.5]{Na00}) and ${\homom(\Gamma,G)}_{a.i.r.} \subset \homom(\Gamma,G)$ is precisely the subset of \nuovo{properly stable points} for the action of $PGL_n$ with respect to the canonical linearization of the trivial line bundle (see \cite[Chap. 1, def. 1.8]{MFK94} and \cite[rem. 6.6]{Na00}).

\subsection{Real closed case}

We need a similar construction for a real closed field $\effe$. If $G = SL_n \mbox{ or } SL^{\pm}_n$, the set of characters of representations $\rho:\Gamma \freccia G(\effe)$ is not an affine algebraic set. Here we will show that this set is a closed semi-algebraic set, and that the map $t:\homom(\Gamma,G(\effe)) \freccia \charat(\Gamma,G(\effe))$ has properties similar to the properties it has in the algebraically closed case.

Let $\cappa = \effe[i]$, the algebraic closure of $\effe$. As the representation varieties are defined over $\qu$, if $\homom(\Gamma,G(\cappa)) \subset \cappa^m$, we have $\homom(\Gamma,G(\effe)) = \homom(\Gamma,G(\cappa)) \cap \effe^n$, and if $\charat(\Gamma,G(\cappa)) \subset \cappa^s$, we have $\charat(\Gamma,G(\effe)) = \charat(\Gamma,G(\cappa)) \cap \effe^s$.

The map $t:\homom(\Gamma,G(\cappa)) \freccia \charat(\Gamma,G(\cappa))$ is defined over $\qu$, hence $t(\homom(\Gamma,G(\effe)) \subset \charat(\Gamma,G(\effe))$. Anyway $t(\homom(\Gamma,G(\effe)))$ is not in general the whole $\charat(\Gamma,G(\effe))$. For example an irreducible representation of $\Gamma$ in $SU_2(\ci)$ has real character, but it is not conjugated to a representation in $SL_2(\erre)$ (see \cite[prop. III.1.1]{MS84} and the discussion for details). Hence the $\effe$-algebraic set $\charat(\Gamma,G(\effe))$ is not in a natural bijection with the set of characters of representations in $\homom(\Gamma,G(\effe))$. We will denote by $\overline{\charat}(\Gamma,G(\effe))$ the image of $t_{|\homom(\Gamma,G(\effe))}$, the actual set of characters of representations in $\homom(\Gamma,G(\effe))$.

In the following we will consider $\effe^n$ as a topological space with the topology inherited from the order of $\effe$, hence the words closed and open refers to this topology. We will say Zariski closed and Zariski open if we want to refer to the Zariski topology.

\begin{theorem}  \label{teo:real char}
Let $R \subset \homom(\Gamma,G(\effe)) \subset \effe^m$ be a closed semi-algebraic set that is invariant for the action of $PGL_n(\effe)$. Then the image $t(R)$ under the semi-geometric quotient map $t$ is a closed semi-algebraic subset of $\effe^s$.
\end{theorem}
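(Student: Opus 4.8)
The plan is to reduce the statement to the known behaviour of the semi-geometric quotient over the algebraically closed field $\cappa = \effe[i]$ together with a topological properness argument that uses the compactness (over $\effe$) of the orbits. First I would recall that $t:\homom(\Gamma,G(\cappa)) \freccia \charat(\Gamma,G(\cappa))$ is a semi-geometric quotient defined over $\qu$, so it restricts to a continuous semi-algebraic map on the $\effe$-points; hence $t(R)$ is a semi-algebraic subset of $\charat(\Gamma,G(\effe)) \subset \effe^s$ (image of a semi-algebraic set under a semi-algebraic map). The only real content is \emph{closedness}. Since everything is semi-algebraic, closedness can be checked by curves: by the curve selection lemma (valid over any real closed field), it suffices to show that if $\chi \in \effe^s$ lies in the closure of $t(R)$, then $\chi \in t(R)$, and that this can be witnessed along a semi-algebraic path $\gamma:[0,1)\to t(R)$ with $\gamma(s)\to\chi$ as $s\to 1$.

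Next I would lift such a path. Because $t$ is surjective on $\cappa$-points and is a quotient by a group action, a semi-algebraic path in the image lifts to a semi-algebraic path $\rho_s \in \homom(\Gamma,G(\effe))$ with $t(\rho_s)=\gamma(s)$; moreover, since $R$ is $PGL_n(\effe)$-invariant and is exactly a union of fibres of $t$ over its image, we may take $\rho_s \in R$. The key point is now to control the limiting behaviour of $\rho_s$ as $s\to 1$. Here I would split into two cases according to whether the limiting character is absolutely irreducible. If $\chi$ lies in ${\charat(\Gamma,G(\effe))}_{a.i.r.}$, then near $\chi$ the quotient map $t_{a.i.r.}$ is a geometric quotient with free $PGL_n$-action (cited from \cite{Na00}), so one can build a semi-algebraic section near $\chi$ and obtain directly a limit representation $\rho_1 \in \homom(\Gamma,G(\effe))$; since $R$ is closed and $PGL_n(\effe)$-invariant and contains the whole fibre over each $\gamma(s)$, we get $\rho_1 \in R$, hence $\chi = t(\rho_1) \in t(R)$. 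The reducible case is the genuinely delicate one: there the fibres of $t$ are not single orbits, and a naive limit of $\rho_s$ may fail to exist in $\homom(\Gamma,G(\effe))$. The standard device is to conjugate $\rho_s$ by a suitable path in $PGL_n(\effe)$ (after possibly passing to a larger real closed field of Puiseaux series to extract a limit, as in Theorem \ref{teo:non-arch descr}, then descending) so that the conjugated path stays in a compact semi-algebraic region and hence has a limit; because $R$ is invariant, the conjugated path still lies in $R$, and its limit is a representation with character $\chi$, lying in the closed set $R$. This gives $\chi \in t(R)$.

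I would phrase the reducible case more structurally by induction on $n$: an absolutely reducible representation is conjugate into a block-upper-triangular form, and its character is determined by the characters of the diagonal blocks; using that a limit of characters is a character of a sum of lower-dimensional pieces (handled by the inductive hypothesis applied to the sub- and quotient representations), one reconstructs a representation in $\homom(\Gamma,G(\effe))$ with the prescribed character, which because $R$ is closed and saturated lies in $R$. The base case $n=1$ is trivial. Throughout, the only facts used beyond elementary semi-algebraic topology are: the surjectivity and submersiveness of the semi-geometric quotient over $\cappa$ (\cite[Chap. 1, thm. 1.1]{MFK94}), freeness of the $PGL_n$-action on the absolutely irreducible locus (\cite[corol. 6.5]{Na00}), and the fact that two representations with the same character, one of them irreducible, are conjugate.

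The main obstacle I expect is precisely the reducible locus: ensuring that the conjugating path in $PGL_n(\effe)$ can be chosen semi-algebraically and that it tames the blow-up of $\rho_s$ without leaving $R$ requires either a careful Jordan--H\"older bookkeeping over the real closed field, or a descent argument from a non-archimedean extension back to $\effe$. Getting a clean, purely semi-algebraic (no transcendental limits) argument here is the part that needs real care; everything else is bookkeeping around the known algebraic-geometry statements.
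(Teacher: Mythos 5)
Your setup (semi-algebraicity of $t(R)$, curve selection over the real closed field, a definable-choice section lifting the curve into $R$) matches the paper's opening moves, but the heart of the matter is missing. The paper's proof hinges on one specific mechanism that your proposal never supplies: after lifting the curve $F\subset t(R)$ to a semi-algebraic curve $s(F)\subset R$ that escapes to infinity, one passes to the Zariski closure $E$ of $s(F)$, to the regular projective model of $E^\cappa$, and to the discrete valuation at a point $\widetilde{y_0}$ lying over the boundary point; since the trace functions $\tau_\gamma$, $\gamma\in C$, take finite values at $y_0$, the character of the tautological representation $\rappr_E:\Gamma\freccia GL_n(\effe(E))$ lies in the valuation ring $\ocors$, and the key lemma (the remark after \cite[prop. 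II.3.17]{MS84}, in the spirit of \cite[prop. 1.4.4]{CS83}) then conjugates $\rappr_E$ over $PGL_n(\effe(E))$ into $GL_n(\ocors)$. Specializing this conjugation along the curve produces, as an honest limit of $PGL_n(\effe)$-conjugates of points of $R$, a representation in $R$ with character $x_0$. Your substitute for this step is only gestured at (``conjugate by a suitable path'', ``Puiseaux descent'', ``Jordan--H\"older bookkeeping''), and you yourself flag it as the unresolved obstacle; without the valuation-ring argument (or an equivalent, e.g.\ a Richardson--Slodowy-type properness statement transferred to $\effe$), the proof is not there.

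Two of the shortcuts you do propose are moreover flawed. First, in the absolutely irreducible case, building a semi-algebraic section of $t$ ``near $\chi$'' over $\effe$ presupposes that the fibre of $t$ over $\chi$ contains $\effe$-points, which is part of what is being proved: real points of $\charat(\Gamma,G(\effe))$ need not be characters of representations into $G(\effe)$ (the paper's own $SU_2$ example), and freeness of the $PGL_n$-action on the a.i.r.\ locus does not by itself prevent the lifted path $\rho_s$ from diverging, so the irreducible case is not actually easier in the way you claim. Second, your statement that $R$ ``is exactly a union of fibres of $t$ over its image'' is false: invariance under $PGL_n(\effe)$ only makes $R$ a union of orbits, while fibres of $t$ over reducible characters contain several orbits. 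This breaks the final step of your inductive reducible-case argument, where you reconstruct \emph{some} representation with the prescribed character and then place it in $R$ ``because $R$ is closed and saturated''; a representation with the right character need not lie in $R$ at all. The paper avoids this trap precisely by producing the limiting representation as a limit of conjugates of elements of $R$, so that closedness and invariance of $R$ suffice.
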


\begin{proof}
The idea of the proof is similar to the one of \cite[prop. 1.4.4]{CS83}, but here we deal with semi-algebraic sets, instead of algebraic sets over an algebraically closed field.

The set $t(R)$ is semi-algebraic as it is the image of a semi-algebraic set via a polynomial map. To show that it is closed, let $x_0$ be a point in the closure of $t(R)$. Suppose, by contradiction, that $x_0$ is not in $t(R)$.

By the curve-selection lemma (see \cite[Chap. 2, thm. 2.5.5]{BCR98}) there exists a semi-algebraic map $f:[0,\varepsilon] \freccia \effe^{\card(C)}$ such that $f(0) = x_0$ and $F = f((0,\varepsilon]) \subset \chi(R)$. The set $F$ is a semi-algebraic curve.

Now we construct a semi-algebraic section $s:F \freccia R$ such that for every $x \in F$, $t(s(x)) = x$. The set $D = t^{-1}(F)$ is a semi-algebraic subset of $R$. The map $t_{|D}: D \freccia F$ induces a definable equivalence relation on $D$: $x \sim y \Leftrightarrow t(x) = t(y)$. By the existence of definable choice functions (see \cite[Chap. 6, 1.2-1.3]{Dr}) there exists a semi-algebraic map $h: D \freccia D$ such that $h(x) = h(y) \Leftrightarrow t(x) = t(y)$, and a semi-algebraic map $k:h(D) \freccia F$ such that $t = k \circ h$. The map $k$ is bijective, and its inverse $k^{-1} = s$ is the searched section.

Consider the image $s(F) \subset R \subset \effe^m$. If it is bounded in $\effe^m$, then the closure $S$ of $s(F)$ is contained in $R$, and $t(S)$ is closed (image of a closed, bounded set, see \cite[Chap. 6, 1.10]{Dr}) hence it contains $x_0$, a contradiction. Hence the image $s(F)$ is unbounded.

We embed $\effe^m$ and $\effe^s$ in the projective spaces $\effe\pro^m$ and $\erre\pro^s$. We denote by $\overline{R}$ the closure of $R$ in $\effe\pro^m$,  by $\overline{t(R)}$ the closure of $t(R)$ in $\erre\pro^s$ and by $\overline{t} : \overline{R} \freccia \overline{t(R)}$ the extension of the map $t$, which exists because $t$ is a polynomial map.

If we see $\effe\pro^m$ as a closed and bounded algebraic subset of some $\effe^M$, the image $s(F)$ is bounded in $\effe^M$, its closure $S$ for the order topology is contained in $\overline{R}$ and $\overline{t}(S)$ is, as before, closed, hence there is a point $y_0 \in S \subset \overline{R}$ such that $\overline{t}(y_0) = x_0$.

Let $E$ be the Zariski closure of $s(F)$ in $\effe\pro^m$. Up to restricting $f$ to a smaller interval $[0,\varepsilon']$, we can suppose that $E$ is an irreducible algebraic curve (see \cite[Chap. 2, prop. 2.8.2]{BCR98}) containing $y_0$. Let $E^\cappa$ be the extension of $E$ to $\cappa\pro^m$. Let $\effe(E)$ be the field of fractions of the curves $E$ and $E^\cappa$, with coefficients in $\effe$, and $\cappa(E^\cappa)$ the field of fractions of $E^\cappa$ with coefficients in $\cappa$.

Let $\widetilde{E^\cappa}$ be the regular projective model of $E^\cappa$, i.e. a regular projective curve with a morphism $i:\widetilde{E^\cappa} \freccia E^\cappa$ that is a birational isomorphism (see \cite[\numero 7A, thm.~7.5]{Mu76}). We denote by $\widetilde{E}$ the inverse image $i^{-1}(E)$. Hence there is an isomorphism $\cappa(E^\cappa) \simeq \cappa(\widetilde{E^\cappa})$. Let $\widetilde{y_0}$ be an element of $i^{-1}(y_0)$. As $\widetilde{y_0}$ is a regular point, the local ring $\ocors_{\widetilde{y_0},\widetilde{E^\cappa}}$ is an UFD, and, as it has dimension $1$, its maximal ideal $m_{\widetilde{y_0},\widetilde{E^\cappa}}$ is the unique prime ideal. Every irreducible element $\pi \in \ocors_{\widetilde{y_0},\widetilde{E^\cappa}}$ generate a prime ideal, hence $(\pi) = m_{\widetilde{y_0},\widetilde{E^\cappa}}$, hence all irreducible elements are associated. Let $v^\cappa:\cappa(E^\cappa)^* \freccia \ze$ be the $\pi$-adic valuation. We consider the restricted valuation $v:\effe(E)^* \freccia \ze$, and we denote $\ocors \subset \effe(E)$ the valuation ring.

Let $\rappr_E:\Gamma \freccia GL_n(\effe(E))$, the canonical representation. We want to show that for every $\gamma \in \Gamma$, $\chi_{\rappr_E}(\gamma) \subset \ocors$, i.e. that all functions $\tau_\gamma$ don't have a pole in $\widetilde{y_0}$. It is enough to show this for the functions ${\{\tau_\gamma\}}_{\gamma \in C}$, as all other functions $\tau_\gamma$ are polynomials in these ones. As $t(y_0) = x_0$, a point in $\effe^m$, we know that the functions ${\{\tau_\gamma\}}_{\gamma \in C}$ have a finite value in $y_0$, hence they don't have a pole in $\widetilde{y_0}$.

Hence $\chi_{\rappr_E}:\Gamma \freccia \ocors$. This implies that this representation is conjugated, via $PGL_n(\effe(E))$, with a representation $\rappr':\Gamma \freccia GL_n(\ocors)$ (see the note after \cite[prop. II.3.17]{MS84}). Hence there exists a matrix $M = (m_{i,j}) \in GL_n(\effe(E))$ such that for all $\gamma \in \Gamma$, $ \rappr'(\gamma) = M \rappr_{E}(\gamma) M^{-1}$.

Let $U = E \cap \homom(\Gamma,G(\effe)) \setminus \{ \mbox{ poles of the functions } m_{i,j} \}$, a Zariski open subset of $E$. We define the map 
$$d:U \ni \rho \freccia M(\rho) \rho {M(\rho)}^{-1} \in \homom(\Gamma,G(\effe))$$
As for every $\rho \in U$, $d(\rho)$ is conjugated to $\rho$, then $t(\rho) = t(d(\rho))$. Moreover, for all $\rho \in U \cap R$, $d(\rho) \in R$, as $R$ is invariant. The map $d$ extends to a rational map $\widetilde{d}:\widetilde{E^\cappa} \freccia \overline{\homom(\Gamma,G(\cappa))}$, that is a morphism as $\widetilde{E^\cappa}$ is regular. Then $t \circ \widetilde{d}_{|\widetilde{E}} = t \circ i_{|\widetilde{E}}$, as this holds on an open subset. As the representation $\rappr'$ takes values in $GL_n(\ocors)$, then $\widetilde{d}(y_0) \in \homom(\Gamma,G(\effe))$, and as $d(U \cap R) \subset R$, and $y_0$ is in the closure of $U$, then $\widetilde{d}(y_0) \in R$. We have $t(\widetilde{d}(y_0)) = t(i_{|\widetilde{E}}(y_0)) = x_0$, hence $x_0$ is in the image $t(R)$, a contradiction.
\end{proof}

\begin{corollary}        \label{corol:real char}
The surjective map
$$t:\homom(\Gamma,G(\effe)) \freccia \overline{\charat}(\Gamma,G(\effe))$$
has the following properties:
\begin{enumerate}
\item The set $\overline{\charat}(\Gamma,G(\effe))$ is a closed semi-algebraic set in natural bijection with the set of characters of representations in $\homom(\Gamma,G(\effe))$.
\item If $R$ is an invariant closed semi-algebraic subset of $\homom(\Gamma,G(\effe))$, then $t(R)$ is a closed semi-algebraic set. If $R$ is an invariant open semi-algebraic subset of $\homom(\Gamma,G(\effe))$, then $t(R)$ is a semi-algebraic set that is open in $\overline{\charat}(\Gamma,G(\effe))$.
\item The set ${\overline{\charat}(\Gamma,G(\effe))}_{a.r.r.} = t({\homom(\Gamma,G(\effe))}_{a.r.r.})$ is a closed semi-algebraic set and the set $\overline{\charat}(\Gamma,G(\effe)) = t({\homom(\Gamma,G(\effe))}_{a.i.r.})$ is a semi-algebraic set that is open in $\overline{\charat}(\Gamma,G(\effe))$.
\item The set ${\overline{\charat}(\Gamma,G(\effe))}_{a.i.r.}$ is in natural bijection with the set theoretical quotient ${\homom(\Gamma,G(\effe))}_{a.i.r.} / PGL_n(\effe)$.
\end{enumerate}
\end{corollary}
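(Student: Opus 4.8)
The plan is to derive Corollary~\ref{corol:real char} from Theorem~\ref{teo:real char} together with the facts recorded in the previous two subsections. Each of the four items will be obtained by applying the theorem to an appropriate invariant closed (or open) semi-algebraic set, after checking that the relevant ambient sets are themselves invariant and semi-algebraic.

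\medskip
\noindent\textit{Item (1).} Take $R = \homom(\Gamma,G(\effe))$ itself. This is an affine algebraic set, in particular closed and semi-algebraic, and it is $PGL_n(\effe)$-invariant since the conjugation action is defined on all of $\homom(\Gamma,G)$. Theorem~\ref{teo:real char} then gives that $t(R) = \overline{\charat}(\Gamma,G(\effe))$ is a closed semi-algebraic subset of $\effe^s$. The natural bijection with the set of characters is essentially by definition: $\overline{\charat}(\Gamma,G(\effe))$ was introduced as the image of $t_{|\homom(\Gamma,G(\effe))}$, and a point of the image is exactly the tuple ${(I_\gamma)}_{\gamma\in C}$ attached to a character, which determines all trace values and hence the character itself, as explained after the construction of $\charat(\Gamma,G(\cappa))$.

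\medskip
\noindent\textit{Item (2).} The closed case is literally Theorem~\ref{teo:real char}. For the open case, let $R$ be an invariant open semi-algebraic subset of $H := \homom(\Gamma,G(\effe))$. Its complement $H\setminus R$ is an invariant closed semi-algebraic set, so by the theorem $t(H\setminus R)$ is closed in $\effe^s$, hence closed in $\overline{\charat}(\Gamma,G(\effe))$. Because the fibres of $t$ over $\overline{\charat}$ are unions of $PGL_n(\effe)$-orbits and $R$ is a union of orbits, we have $t(R) = \overline{\charat}(\Gamma,G(\effe))\setminus t(H\setminus R)$: a point $x\in\overline{\charat}$ lies in $t(R)$ iff its full $t$-preimage meets $R$, and since that preimage is orbit-saturated and $R$, $H\setminus R$ partition $H$ into orbit-saturated pieces, $x\notin t(R)$ forces $t^{-1}(x)\subset H\setminus R$, i.e. $x\in t(H\setminus R)$. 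Thus $t(R)$ is the complement in $\overline{\charat}(\Gamma,G(\effe))$ of a closed set, hence open in it, and it is semi-algebraic as the image of a semi-algebraic set under a polynomial map. The one point needing care here is the saturation claim: this is where I would cite the fact, recorded in Subsection~\ref{subsez:chars}, that $t$ is a semi-geometric quotient and that over an algebraically closed field the geometric fibres are single orbits on the absolutely irreducible locus and closures of orbits in general; over $\effe$ one argues on $\cappa = \effe[i]$-points and intersects with $\effe^s$, using that $R$ and its complement are already defined by conditions invariant under $PGL_n$.

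\medskip
\noindent\textit{Items (3) and (4).} Apply Items (1) and (2) to the specific invariant sets ${\homom(\Gamma,G(\effe))}_{a.r.r.}$ and ${\homom(\Gamma,G(\effe))}_{a.i.r.}$. The former is the set of $\effe$-points of the closed subscheme ${\homom(\Gamma,G)}_{a.r.r.}$, hence a closed invariant semi-algebraic set, so Item (2) gives that its image is closed semi-algebraic; the latter is its complement, hence open invariant semi-algebraic, so its image is semi-algebraic and open in $\overline{\charat}(\Gamma,G(\effe))$. For Item (4), the action of $PGL_n$ on ${\homom(\Gamma,G)}_{a.i.r.}$ is free and this is a geometric quotient over an algebraically closed field with fibres equal to orbits (recorded in Subsection~\ref{subsez:chars}); passing to $\effe$-points and intersecting with $\effe^s$, two $\effe$-representations with the same character are $PGL_n(\cappa)$-conjugate when one is absolutely irreducible, and in fact $PGL_n(\effe)$-conjugate since the conjugating element can be taken with entries in $\effe$ (a Galois-descent / $\effe$-rationality argument, or directly from the reference \cite[thm. 6.12]{Na00}). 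Hence the set-theoretic quotient ${\homom(\Gamma,G(\effe))}_{a.i.r.}/PGL_n(\effe)$ maps bijectively onto ${\overline{\charat}(\Gamma,G(\effe))}_{a.i.r.}$.

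\medskip
\noindent I expect the main obstacle to be precisely the orbit-saturation bookkeeping in Item (2) and the $\effe$-rationality of the conjugating matrix in Item (4): over a non-algebraically-closed real closed field one must be careful that ``same character implies conjugate'' holds with the conjugation performed over $\effe$ rather than over $\cappa$, and that the complement of an invariant set is mapped onto the complement of its image. Everything else is a routine application of Theorem~\ref{teo:real char} and the semi-algebraic toolkit (images of semi-algebraic sets under polynomial maps are semi-algebraic).
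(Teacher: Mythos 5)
Your items (1) and (4), and the closed halves of (2) and (3), are fine and match what the paper intends (its proof is just an appeal to Theorem~\ref{teo:real char} and \cite[Chap.~1, thm.~1.1]{MFK94}); in particular your observation that the conjugating matrix in item (4) can be taken in $GL_n(\effe)$ by a Schur/intertwiner dimension count is exactly the point that needs saying. The genuine gap is in your treatment of the open case of item (2). Your argument rests on the identity $t(R) = \overline{\charat}(\Gamma,G(\effe)) \setminus t(H\setminus R)$, justified by the claim that $R$ and $H\setminus R$, being orbit-saturated, are fiber-saturated for $t$. This is false: outside the absolutely irreducible locus a fiber of $t$ is a union of several $PGL_n(\effe)$-orbits, and an invariant open set may contain some of them without containing all of them. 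Concretely, take $\Gamma = \ze$, $G = SL_2$, $\effe = \erre$, and $R = \homom(\ze, SL_2(\erre)) \setminus \{\mbox{trivial representation}\}$, which is invariant, open and semi-algebraic. The unipotent representation $\gamma \mapsto \pvettore{1 & 1 \\ 0 & 1}$ lies in $R$ and has the same character as the trivial representation, so the trivial character belongs both to $t(R)$ and to $t(H\setminus R)$, and your equality fails ($t(R)$ is all of $\overline{\charat}$ here, not the complement of $t(\{\ident\})$). What your complement argument actually proves is only the inclusion $t(R) \supset \overline{\charat}(\Gamma,G(\effe)) \setminus t(H\setminus R)$, i.e.\ that $t(R)$ contains an open set, not that it is open; the general open statement of item (2) therefore still needs an argument, and the one-way inclusion does not supply it.

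Note that the openness claim you need for item (3) survives, but for a different reason than the one you give: the absolutely irreducible locus \emph{is} fiber-saturated, because any representation with the same character as an absolutely irreducible one is conjugate to it (the fact recalled in subsection on representation varieties), hence itself absolutely irreducible. So for $R = {\homom(\Gamma,G(\effe))}_{a.i.r.}$ the equality $t(R) = \overline{\charat}(\Gamma,G(\effe)) \setminus t({\homom(\Gamma,G(\effe))}_{a.r.r.})$ does hold and openness follows from Theorem~\ref{teo:real char} applied to the closed invariant set ${\homom(\Gamma,G(\effe))}_{a.r.r.}$. If you want to keep the full strength of item (2) for arbitrary invariant open semi-algebraic $R$, you must either prove some openness property of $t$ onto its image at non-fiber-saturated points (this does not follow formally from the closed-image theorem, and the paper itself offers no details beyond citing \cite{MFK94}), or restrict the statement to saturated sets; as written, your derivation of it is not correct.
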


\begin{proof}
Everything follows from the previous theorem and from \cite[Chap. 1, thm. 1.1]{MFK94}.
\end{proof}

\section{Spaces of projective structures}        \label{sez:projective}

\subsection{Deformation spaces}

We refer to \cite{G} for the definitions related to the theory of geometric structures on manifolds. In the following we will consider the geometry $\erre\pro^n = (\erre\pro^n, PGL_{n+1}(\erre))$ the \nuovo{projective geometry}. A \nuovo{projective structure} is an $\erre\pro^n$-structure, and a \nuovo{projective manifold} is a $\erre\pro^n$-manifold.

Let $S$ be a manifold. A \nuovo{marked} $\erre\pro^n$-\nuovo{structure} on $S$ is a pair $(M,\phi)$, where $M$ is an $\erre\pro^n$-manifold and $\phi:S \freccia M$ is a diffeomorphism. The diffeomorphism $\phi$ induces an $\erre\pro^n$-structure on $S$. Two marked $\erre\pro^n$-structures $(M,\phi)$, $(N,\psi)$ on $S$ are \nuovo{isotopic} if there is an $\erre\pro^n$-map $h:M \freccia N$ such that $\psi$ is isotopic to $h\circ \phi$. Note that $h$ is necessarily an isomorphism.

We choose a base point $s_0 \in S$ and a universal covering space $\widetilde{S} \freccia S$. A \nuovo{based} $\erre\pro^n$-\nuovo{structure} on $S$ is a triple $(M,\phi,\psi)$ where $M$ is an $\erre\pro^n$-manifold, $\phi:S \freccia M$ is a diffeomorphism and $\psi$ is an $\erre\pro^n$-germ at $\phi(s_0)$. The diffeomorphism $\phi$ induces an $\erre\pro^n$-structure on $S$. The germ $\psi$ determines a developing pair $(D,h)$ for $M$, and this developing pair induces, via the diffeomorphism $\phi$, a developing pair $(f,\rho)$ for the $\erre\pro^n$-structure on $S$, such that $\rho:\pi_1(S,s_0) \freccia PGL_n(\erre)$ is a representation, and $f:\widetilde{S} \freccia X$ is a $\rho$-equivariant local diffeomorphism. Vice versa every such pair $(f,\rho)$ determines a based $\erre\pro^n$-structure on $S$.

We say that two based $\erre\pro^n$-structures $(f,\rho)$ and $(f',\rho')$ are \nuovo{isotopic} if $\rho = \rho'$ and there exists a diffeomorphism $h:(S,s_0) \freccia (S,s_0)$, isotopic to the identity, such that $f' = f \circ \widetilde{h}$, where $\widetilde{h}$ is the lift of $h$ to $\widetilde{S}$.

We consider the algebraic set $\homom(\pi_1(S,s_0),PGL_n(\erre))$ with the topology induced by the order topology of $\erre$, and the set $C^\infty(\widetilde{S},X)$ of smooth maps $\widetilde{S} \freccia X$ with the $C^\infty$ topology.

We define the deformation set of based $\erre\pro^n$-structures:
$$\mathcal{D}_{\erre\pro^n}'(S) = \{ (f,\rho) \in C^\infty(\widetilde{S},X) \times \homom(\pi_1(S,s_0),PGL_n(\erre)) \ |\ $$ 
$$f \mbox{ is a }\rho\mbox{-equivariant local diffeomorphism} \}$$

This set inherits the subspace topology. We denote by $\diff(S,s_0)$ the group of all diffeomorphisms $S \freccia S$ fixing $s_0$, and by $\diff_0(S,s_0)$ the subgroup of all diffeomorphisms fixing $s_0$ and isotopic to the identity. The group $\diff_0(S,s_0)$ acts properly and freely on $\mathcal{D}_{\erre\pro^n}'(S)$. We denote by $\mathcal{D}_{\erre\pro^n}(S)$ the quotient by this action, the set of isotopy classes of based $\erre\pro^n$-structures:
$$\mathcal{D}_{\erre\pro^n}(S) = \mathcal{D}_{\erre\pro^n}'(S) / \diff_0(S,s_0)$$
this set is endowed with the quotient topology. The group $PGL_n(\erre)$ acts on $\mathcal{D}_{\erre\pro^n}'(S)$ by composition on $f$ and by conjugation on $\rho$, and this action passes to the quotient $\mathcal{D}_{\erre\pro^n}(S)$. We will denote the quotient by
$$\mathcal{T}_{\erre\pro^n}(S) = \mathcal{D}_{\erre\pro}(S) / PGL_n(\erre)$$
This set is endowed with the quotient topology. It is in natural bijection with the set of marked $\erre\pro^n$-structures up to isotopy.

The holonomy map
$$\hol_\mathcal{D}':\mathcal{D}_{\erre\pro}'(S) \ni (f,\rho) \freccia \rho \in \homom(\pi_1(S,s_0),PGL_n(\erre))$$
is continuous and it is invariant under the action of $\diff_0(S,s_0)$, hence it defines a continuous map
$$\hol_{\mathcal{D}}:\mathcal{D}_{\erre\pro}(S) \freccia \homom(\pi_1(S,s_0),PGL_n(\erre))$$
The group $PGL_n(\erre)$ acts on $\homom(\pi_1(S,s_0),PGL_n(\erre))$ by conjugation, and on $\mathcal{D}_{\erre\pro}(S)$ as said. The map $\hol_{\mathcal{D}}$ is equivariant with respect to these $PGL_n(\erre)$-actions, hence it induces a continuous map
$$\hol_{\mathcal{T}}:\mathcal{T}_{\erre\pro^n}(S) \freccia \homom(\pi_1(S,s_0),PGL_n(\erre)) / PGL_n(\erre)$$

By the deformation theorem (see \cite[5.3.1]{Th}, \cite{Lo84} or \cite{CEG87}), the map $\hol_{\mathcal{D}}'$ is open and the map $\hol_{\mathcal{D}}$ is a local homeomorphism. To prove that also $\hol_{\mathcal{T}}$ is a local homeomorphism we need additional hypotheses.

As the group $PGL_n(\erre)$ is a reductive group, the set of \nuovo{stable points} ${\homom(\pi_1(S,s_0),PGL_n)}^{s} \subset \homom(\pi_1(S,s_0),PGL_n)$ (see \cite[Chap. 1, def. 1.7]{MFK94}) is a Zariski open invariant subset upon which the action of $PGL_n$ is proper. We denote ${\mathcal{T}_{\erre\pro^n}(S)}^{s} = \hol^{-1}({\homom(\pi_1(S,s_0),PGL_n(\erre))}^{s})$. Then the map:
$$\hol_{\mathcal{T}}: {\mathcal{T}_{\erre\pro^n}(S)}^{s} \freccia {\homom(\pi_1(S,s_0),PGL_n(\erre))}^{s} / PGL_n(\erre)$$
is a local homeomorphism (see \cite[cor. 3.2]{G}).

\subsection{Convex projective structures}      \label{subsez:projective geometry}

An \nuovo{affine space} in $\erre\pro^n$ is the complement of a projective hyperplane. A set $\Omega \subset \erre\pro^n$ is \nuovo{convex} if it is contained in some affine space and its intersection with every projective line is connected. A convex set is \nuovo{properly convex} if its closure $\overline{\Omega}$ is contained in an affine space. A properly convex set $\Omega \subset \erre\pro^n$ is \nuovo{strictly convex} if its boundary $\partial \Omega$ does not contain any segment.

A \nuovo{convex projective manifold} is a projective manifold isomorphic to $\Omega / \Gamma$, where $\Omega \subset \erre\pro^n$ is an open properly convex domain and $\Gamma \subset PGL_{n+1}(\erre)$ is a discrete group acting properly and freely on $\Omega$. A \nuovo{strictly convex projective manifold} is a convex projective manifold $\Omega / \Gamma$, where $\Omega$ is strictly convex. In other words, a projective structure is strictly convex if an only if the developing map is injective, with image a strictly convex open subset of $\erre\pro^n$.

For example, $\iper^n \subset \erre\pro^n$ is an open, strictly convex set, $O^+(1,n) \subset PGL_{n+1}(\erre)$, hence every complete hyperbolic manifold is a strictly convex projective manifold.

A subgroup $\Gamma \subset PGL_n(\erre)$ \nuovo{divides} an open properly convex set $\Omega$ if $\Gamma$ is discrete, $\Omega$ is invariant for $\Gamma$ and the quotient $\Omega / \Gamma$ is compact. If such a group exists, $\Gamma$ is called a \nuovo{divisible convex set}. If $\Gamma$ is torsion-free, the action is free, hence the quotient $\Omega / \Gamma$ is a closed manifold, with a convex projective structure.

Let $\Gamma$ be a group. The \nuovo{virtual center} of $\Gamma$ is the subgroup of all elements of $\Gamma$ whose centralizer has finite index in $\Gamma$. The virtual center of $\Gamma$ is trivial if and only if every subgroup with finite index in $\Gamma$ has trivial center. A subgroup $\Gamma \subset PGL_{n+1}(\erre)$ is \nuovo{strongly irreducible} if and only if all subgroups of finite index of $\Gamma$ are irreducible. A properly convex set $\Omega \subset \erre\pro^n$ is said to be \nuovo{reducible} if there exists a direct-sum decomposition $\erre^{n+1} = V \oplus W$ such that the cone $\overline{\Omega} = \pi^{-1}(\Omega) \cup \{0\} \subset \erre^{n+1}$ is the direct sum $C + D$ of two convex cones $C \subset V$, $D \subset W$, otherwise it is said to be \nuovo{irreducible}.

\begin{proposition}
Let $\Gamma \subset PGL_{n+1}(\erre)$ be a group dividing an open properly convex set $\Omega$. The following are equivalent:
\begin{enumerate}
 \item $\Gamma$ has trivial virtual center.
 \item $\Gamma$ is strongly irreducible.
 \item $\Omega$ is irreducible.
\end{enumerate}
\end{proposition}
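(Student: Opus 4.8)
The plan is to establish the equivalences $(1)\Leftrightarrow(3)$ and $(2)\Leftrightarrow(3)$, the essential input being the structure theory of divisible convex sets of Benoist (\cite{Be1}, \cite{Be2}); concretely, each of (1) and (2) will be shown to be equivalent to the assertion that, in the canonical decomposition of $\Omega$ into irreducible summands, there is a single summand. The organizing fact is Benoist's decomposition theorem: a properly convex open set $\Omega$ divided by $\Gamma$ decomposes uniquely as a direct sum $\Omega = \Omega_1 \oplus \cdots \oplus \Omega_k$ of irreducible properly convex open sets, and $\Gamma$ has a finite-index subgroup of the form $\Gamma_1 \times \cdots \times \Gamma_k \times \ze^{k-1}$, where each $\Gamma_i$ divides $\Omega_i$ and the free abelian factor is generated by the homotheties acting independently on the cones over the summands.

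I would first prove the implications $(2)\Rightarrow(3)$ and $(1)\Rightarrow(3)$ by contraposition, which is elementary once the decomposition is available. Assume $\Omega$ is reducible, i.e. $k \geq 2$, and put $V_i = \Span(\pi^{-1}(\Omega_i) \cup \{0\}) \subset \erre^{n+1}$. The finite-index product subgroup above preserves each proper subspace $V_i$, hence is a reducible subgroup of $PGL_{n+1}(\erre)$, so $\Gamma$ is not strongly irreducible. Moreover the $\ze^{k-1}$ factor is central in that finite-index subgroup and its nonzero elements are nontrivial in $PGL_{n+1}(\erre)$, so they lie in the virtual center of $\Gamma$, which is therefore nontrivial.

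It remains to pass from $k = 1$ back to strong irreducibility and to triviality of the virtual center; here one must control a group $\Gamma_1$ dividing the \emph{irreducible} set $\Omega_1 = \Omega$, and this is where the depth of Benoist's work enters. I would invoke his results on the Zariski closure: such a $\Gamma_1$ is Zariski-dense in the identity component $G$ of $\aut(\Omega)$ --- which is $SL_{n+1}(\erre)$ when $\Omega$ is not homogeneous, and the relevant semisimple Lie group otherwise --- and, still as part of the structure theory, $G$ acts irreducibly on $\erre^{n+1}$ with trivial centralizer in $PGL_{n+1}(\erre)$. Zariski-density passes to every finite-index subgroup, so for an arbitrary finite-index $H \leq \Gamma$ the subgroup $H \cap \Gamma_1$ is Zariski-dense in $G$ and hence acts irreducibly; since a subgroup acting irreducibly forces the ambient group to act irreducibly, $H$ itself is irreducible, so $\Gamma$ is strongly irreducible. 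Likewise, an element $g$ in the virtual center centralizes some finite-index subgroup, hence also the finite-index, Zariski-dense-in-$G$ subgroup of the form $N \cap \Gamma_1$; thus $g$ lies in the centralizer of $G$ in $PGL_{n+1}(\erre)$, which is trivial, so $g = 1$ and the virtual center is trivial. This establishes $(3)\Rightarrow(2)$ and $(3)\Rightarrow(1)$, completing the two equivalences.

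The step I expect to be the genuine obstacle is precisely the appeal to Benoist's structure theorem --- the canonical direct-sum decomposition together with the almost-product decomposition of $\Gamma$ --- and to the Zariski-density statement for the irreducible factors; these are the "deep results of Benoist" mentioned in the introduction. The equivalences are not within reach of elementary linear algebra alone: a priori a group could act $\erre$-irreducibly on every one of its finite-index subgroups and still carry a nontrivial virtual center coming from a complex or quaternionic structure on $\erre^{n+1}$, and it is only Benoist's theory that rules this out for groups that divide a properly convex set.
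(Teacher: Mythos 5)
The paper does not actually prove this proposition: its ``proof'' is just a pointer to Vey \cite{Ve70} and Benoist \cite{Be0}, \cite{Be3}, where the equivalence is established. Your sketch reconstructs essentially the argument of those references, and the two halves you isolate are the right ones: the contrapositives $(1),(2)\Rightarrow(3)$ via the Vey--Benoist decomposition $\Omega=\Omega_1\oplus\cdots\oplus\Omega_k$ with a finite-index subgroup isomorphic to $\ze^{k-1}\times\Gamma_1\times\cdots\times\Gamma_k$ (the central $\ze^{k-1}$ providing both invariant subspaces and nontrivial virtual center), and the converses via the structure of groups dividing an irreducible convex set. One sentence in the converse direction is wrong as written, although the intended argument goes through: you assert that $\Gamma$ is Zariski-dense in the identity component of $\aut(\Omega)$, ``which is $SL_{n+1}(\erre)$ when $\Omega$ is not homogeneous.'' For non-homogeneous $\Omega$ the group $\aut(\Omega)$ acts properly on $\Omega$ and is typically discrete; it is never $SL_{n+1}(\erre)$, which preserves no properly convex open set. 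The correct input, due to Benoist, concerns the \emph{Zariski closure of $\Gamma$}: it is all of $SL_{n+1}(\erre)$ (up to $\pm$) when $\Omega$ is irreducible and non-symmetric, while in the symmetric case $\Gamma$ is a cocompact lattice in the semisimple group $\aut(\Omega)^{0}$ and is Zariski-dense there by Borel density. With that correction your centralizer argument additionally needs, in the symmetric case, that $\aut(\Omega)^{0}$ acts absolutely irreducibly on $\erre^{n+1}$ (true for every irreducible symmetric cone), which is precisely the point you flag at the end as the place where Benoist's theory is indispensable. So your proposal is a sound reconstruction of the cited proof; the paper's citation simply avoids restating this structure theory, and if you keep your argument you should rephrase the Zariski-density step as above.
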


\begin{proof}
See \cite{Ve70}, \cite{Be0} and \cite{Be3}.
\end{proof}

As a corollary, if $M$ is a complete hyperbolic manifold, the fundamental group of $M$ has trivial virtual center, as it is isomorphic to a subgroup of $O^+(1,n)$ dividing $\iper^n$, that is irreducible.

\begin{proposition}
If $\Gamma \subset PGL_{n+1}(\erre)$ divides an open properly convex set $\Omega$. Then $\Omega$ is strictly convex if and only if the group $\Gamma$ is Gromov hyperbolic.
\end{proposition}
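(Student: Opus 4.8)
The statement is one of the central results of Benoist's work on divisible convex sets (\cite{Be1}); the plan is to reformulate it via the Hilbert metric and then treat the two implications separately, the divisibility hypothesis being essential for both.

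First I would equip $\Omega$ with its Hilbert metric $d_\Omega$, the projective cross-ratio distance. This is a complete, proper, geodesic metric, invariant under $PGL_{n+1}(\erre)$, so $\Gamma$ acts on $(\Omega,d_\Omega)$ by isometries, properly discontinuously, and cocompactly (since $\Omega/\Gamma$ is compact). By the Milnor--\v{S}varc lemma, $\Gamma$ equipped with a word metric is quasi-isometric to $(\Omega,d_\Omega)$, and since Gromov hyperbolicity is a quasi-isometry invariant among geodesic spaces, the statement reduces to proving that $(\Omega,d_\Omega)$ is Gromov hyperbolic if and only if $\Omega$ is strictly convex.

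For the direction ``not strictly convex $\Rightarrow$ not hyperbolic'', suppose $\partial\Omega$ contains a nondegenerate segment, hence a maximal one $[a,b]$. Benoist's structural analysis of divisible convex sets shows that the cocompact action then forces a properly embedded flat into $(\Omega,d_\Omega)$: the part of $\Omega$ facing $[a,b]$ contains, up to a bi-Lipschitz distortion, a copy of a two-dimensional normed plane --- in dimension two this is transparent, a non-strictly-convex divisible $\Omega$ being a triangle, whose Hilbert metric is exactly a hexagonal-norm plane, and in general the stabiliser of a maximal boundary segment produces such a flat (frequently an honest $\ze^{2}$ inside $\Gamma$). A normed plane contains arbitrarily fat geodesic triangles, so $(\Omega,d_\Omega)$ cannot be $\delta$-hyperbolic, and neither can $\Gamma$.

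The converse, ``strictly convex $\Rightarrow$ hyperbolic'', is the deep implication, and I would simply invoke \cite{Be1} for it. Its proof combines Benz\'ecri's compactness theorem for the space of pointed properly convex bodies with the cocompactness of the $\Gamma$-action: because $\Gamma$ acts cocompactly, the shape of $\Omega$ as seen from an interior point cannot degenerate, so strict convexity --- together with the $C^{1}$-regularity of $\partial\Omega$ that accompanies it for a divisible convex set --- holds \emph{uniformly}, that is, with scale-invariant quantitative bounds. Feeding these bounds into cross-ratio comparison estimates then shows that geodesic triangles in $(\Omega,d_\Omega)$ are uniformly thin, i.e. that $(\Omega,d_\Omega)$ is $\delta$-hyperbolic. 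The main obstacle, and the technical heart of Benoist's argument, is precisely this passage from \emph{qualitative} strict convexity to a \emph{quantitative}, scale-invariant hyperbolicity constant --- a step that genuinely uses divisibility and has no analogue for arbitrary properly convex $\Omega$, where one can exhibit strictly convex domains whose Hilbert geometry is not Gromov hyperbolic.
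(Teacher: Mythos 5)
Your proposal is correct, and it is in substance the same as the paper's proof: the paper disposes of this proposition simply by citing Benoist (\cite{Be1}), which is exactly the result you invoke for the deep implication, while your Milnor--\v{S}varc reduction and the flat/triangle argument for the easy direction are an accurate sketch of what lies inside that citation. Nothing further is needed.
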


\begin{proof}
See \cite{Be1}.
\end{proof}

As this property depends only on the abstract group $\Gamma$, if the fundamental group of a manifold $M$ is Gromov hyperbolic, then all convex projective structures on $M$ are strictly convex. For example every closed hyperbolic $n$-manifold is a strictly convex projective manifold.

Let $SL^{\pm}_{n+1}(\erre) \subset GL_{n+1}(\erre)$ be the subgroup of matrices with determinant $\pm 1$. Then $PGL_{n+1} = SL^{\pm}_{n+1}(\erre) / \{\pm \ident\}$.

If $\gamma \in PGL_{n+1}(\erre)$, let $\overline{\gamma} \in SL^{\pm}_{n+1}(\erre)$ be a lift. Let $\lambda_1(\gamma), \dots, \lambda_{n+1}(\gamma)$ be its complex eigenvalues, ordered such that $|\lambda_1(\gamma)| \geq |\lambda_2(\gamma)| \geq \dots \geq |\lambda_{n+1}(\gamma)|$. The element $\gamma$ is said to be \nuovo{proximal} if $|\lambda_1(\gamma)| > |\lambda_2(\gamma)|$. In this case $\lambda_1(\gamma)$ is real, and its eigenvector corresponds to the unique attracting fixed point $x_g \in \erre\pro^n$ of $g$.

\begin{proposition}
Let $\Gamma \subset PGL_{n+1}(\erre)$ be a torsion-free group dividing a strictly convex set $\Omega$. Then every element $\gamma \in \Gamma$ is proximal. In particular $\gamma^{-1}$ is also proximal, hence the eigenvector $\lambda_{n+1}(\gamma)$ is real. Moreover, if $\overline{\gamma} \in SL^{\pm}_{n+1}(\erre)$ is a lift of $\gamma$, then $\lambda_1(\overline{\gamma})$ and $\lambda_{n+1}(\overline{\gamma})$ have the same sign.
\end{proposition}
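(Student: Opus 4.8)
The plan is to extract from the hypotheses the north--south dynamics of $\gamma$ on $\overline{\Omega}$, translate it into Perron--Frobenius data for a lift of $\gamma$ preserving a proper convex cone in $\erre^{n+1}$, and finally convert ``attracting fixed point'' into ``strictly dominant eigenvalue'' by passing to a carefully chosen affine chart.

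First I would set up the dynamical picture. Assume $\gamma\neq\ident$; as $\Gamma$ is torsion-free, $\gamma$ has infinite order, and as $\Omega$ is strictly convex, $\Gamma$ is Gromov hyperbolic and acts properly, cocompactly and freely by Hilbert isometries on the proper geodesic space $(\Omega,d_\Omega)$. Hence $(\Omega,d_\Omega)$ is itself hyperbolic, its Gromov boundary is equivariantly identified with $\partial\Omega$ (geodesics for $d_\Omega$ are straight segments in the strictly convex case), and $\gamma$ is loxodromic: it has positive translation length, exactly two fixed points $p^{+}\neq p^{-}$ in $\partial\Omega$, and north--south dynamics, so $\gamma^{k}z\to p^{+}$ for every $z\in\overline{\Omega}\setminus\{p^{-}\}$, and symmetrically for $\gamma^{-1}$. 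Let $C\subset\erre^{n+1}$ be one of the two open convex cones over $\Omega$, so that $\overline{C}$ is a proper closed convex cone with non-empty interior. A lift $g$ of $\gamma$ to $SL^{\pm}_{n+1}(\erre)$ sends $\overline{C}$ to $\overline{C}$ or to $-\overline{C}$; replacing $g$ by $-g$ if needed (which flips the sign of every eigenvalue and hence does not affect the ``same sign'' claim) I may take $g(\overline{C})=\overline{C}$. Representatives $w^{+},w^{-}$ of $p^{+},p^{-}$ are then eigenvectors of $g$, and since $\overline{C}$ is pointed their eigenvalues $\lambda_{+},\lambda_{-}$ are positive; the axis of $\gamma$ is the segment $\Omega\cap\pro(\erre w^{+}\oplus\erre w^{-})$, and computing $d_\Omega$ along it as a cross-ratio gives $\lambda_{+}>\lambda_{-}>0$.

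Next I would pin down the extreme eigenvalues. By the Krein--Rutman theorem for a linear map preserving a proper closed convex cone, the spectral radius of $g$ is an eigenvalue with an eigenvector in $\overline{C}$; its projective class is a $\gamma$-fixed point of $\overline{\Omega}$, which is not in $\Omega$ by freeness, hence equals $p^{+}$ or $p^{-}$, and the inequality $\lambda_{+}>\lambda_{-}$ forces $p^{+}$. So $\lambda_{+}$ is the spectral radius, i.e.\ $\lambda_{1}(g)=\lambda_{+}>0$; applying the same to $g^{-1}$ (which also preserves $\overline{C}$, with attracting point $p^{-}$) gives $\lambda_{n+1}(g)=\lambda_{-}>0$. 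This already yields the ``real and same sign'' part of the statement. (Note $\lambda_{+}\neq\lambda_{-}$ forbids all eigenvalues from having equal modulus, which would make $\langle\gamma\rangle$ relatively compact, contradicting discreteness.) It remains to show $|\lambda_{2}(g)|<\lambda_{1}(g)$. For this I dualize: $g^{*}$ preserves the dual cone $\overline{C}^{\,*}$ with the same spectral radius, so there is a covector $\phi$ with $\phi\circ g=\lambda_{+}\phi$, and $\pro(\ker\phi)$ is a supporting hyperplane of $\overline{\Omega}$; by strict convexity it touches $\overline{\Omega}$ in a single $\gamma$-fixed point, hence $p^{+}$ or $p^{-}$, and comparing $\phi\circ g^{k}=\lambda_{+}^{k}\phi$ with $g^{k}w^{-}=\lambda_{-}^{k}w^{-}$ forces $\phi(w^{-})=0$, so the point is $p^{-}$ and $\phi(w^{+})\neq0$. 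Thus $\erre^{n+1}=\erre w^{+}\oplus\ker\phi$ is $g$-invariant, and in the affine chart $\erre^{n}=\erre\pro^{n}\setminus\pro(\ker\phi)$ --- which contains a neighbourhood of $p^{+}$, since the only point of $\overline{\Omega}$ at infinity is $p^{-}\neq p^{+}$, and which puts $p^{+}$ at the origin --- the transformation $\gamma$ acts as the linear map $B=\lambda_{+}^{-1}\,g|_{\ker\phi}$. Choosing a round ball $B(x_{1},\varepsilon)\subset\Omega$ near $p^{+}$ and away from $p^{-}$, north--south dynamics forces $B^{k}\!\big(B(x_{1},\varepsilon)\big)\to\{0\}$; comparing $B^{k}(x_{1}+z)$ with $B^{k}(x_{1}-z)$ gives $\|B^{k}\|_{\mathrm{op}}\to0$, hence the spectral radius of $B$ is $<1$, i.e.\ every eigenvalue of $g$ other than the simple eigenvalue $\lambda_{+}$ has modulus $<\lambda_{+}$. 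So $\gamma$ is proximal. Then ``$\gamma^{-1}$ is proximal'' is the same statement applied to $\gamma^{-1}\in\Gamma$, and the reality of $\lambda_{n+1}(g)$ also follows, being the reciprocal of the unique dominant eigenvalue of the lift of $\gamma^{-1}$.

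The main obstacle is the final step: north--south dynamics controls only the action on $\overline{\Omega}$, which ``looks lower-dimensional from the boundary'', whereas proximality concerns the linear action on all of $\erre^{n+1}$. The Perron eigen-covector $\phi$ is exactly the bridge, producing a $\gamma$-invariant hyperplane meeting $\overline{\Omega}$ only at the repelling point, so that a genuinely open neighbourhood of $p^{+}$ lies in an affine chart on which $\gamma$ is linear --- after which possible Jordan blocks at the top eigenvalue are killed automatically by $\|B^{k}\|_{\mathrm{op}}\to0$. A smaller technical point to isolate is the identification of the Gromov boundary of $(\Omega,d_\Omega)$ with $\partial\Omega$, together with compatibility of topologies, which is where strict convexity is genuinely used.
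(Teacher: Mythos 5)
The paper does not actually prove this proposition: its ``proof'' is the citation \emph{See \cite{Be1}}, where Benoist establishes biproximality of elements dividing a strictly convex set. Your argument is therefore a genuinely different contribution: a self-contained reconstruction via Perron--Frobenius theory for the invariant cone plus north--south dynamics, and as far as I can check it is correct. The chain cone-preserving lift $\Rightarrow$ positive eigenvalues at the two fixed points, cross-ratio along the axis $\Rightarrow$ $\lambda_{+}>\lambda_{-}$, Krein--Rutman for $g$ and for the adjoint on the dual cone $\Rightarrow$ $\lambda_{+}$ is the spectral radius with an invariant hyperplane $\ker\phi$ satisfying $\phi(w^{-})=0$, $\phi(w^{+})\neq 0$, and the linearization $B=\lambda_{+}^{-1}g|_{\ker\phi}$ with $\|B^{k}\|\to 0$ forced by uniform contraction of a compact ball of $\Omega$ to $p^{+}$, all goes through, and it correctly handles Jordan blocks and the sign statement for either lift. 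Three caveats. First, you tacitly assume $\gamma\neq\ident$ (necessary, since the identity is not proximal; the paper's statement is equally sloppy here). Second, the step you flag yourself --- the equivariant homeomorphism between $\overline{\Omega}$ and the Gromov compactification, which you need both for north--south dynamics on a ball inside $\Omega$ and to know that $p^{\pm}$ are the \emph{only} fixed points of $\gamma$ in $\partial\Omega$ --- is itself a nontrivial theorem of \cite{Be1}; since the paper already quotes \cite{Be1} for ``strictly convex $\Leftrightarrow$ Gromov hyperbolic'', this is admissible and not circular, but it means your proof is not fully elementary, and you should cite it rather than treat it as a compatibility check. (Note that for the ball inside $\Omega$ you could instead argue directly: $d_\Omega(\gamma^k x,\gamma^k x_1)$ is constant and bounded-distance orbits converging to an extreme boundary point have the same limit, which uses only strict convexity.) Third, the parenthetical claim that equal moduli of all eigenvalues would make $\langle\gamma\rangle$ relatively compact is false as stated (Jordan blocks), but it is unused; and the sentence asserting that $\pro(\ker\phi)$ is a supporting hyperplane should logically come after $\phi(w^{-})=0$, since a priori $\ker\phi$ could miss $\overline{\Omega}$ entirely.
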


\begin{proof}
See \cite{Be1}.
\end{proof}

The point $y_{\gamma} = x_{\gamma^{-1}}$ is the unique repelling fixed point of $\gamma$. The points $x_\gamma, y_\gamma$ are in $\partial \Omega$, and the segment $[x_g, y_g]$ is the unique invariant geodesic of $\gamma$ in $\Omega$. The image of $[x_\gamma,y_\gamma]$ in $\Omega / \Gamma$ is the unique geodesic in the free-homotopy class of $\gamma$. Moreover, $\Omega / \Gamma$ does not contain any closed homotopically trivial geodesic.

\begin{corollary}  \label{corol:lift}
The set $\pi^{-1}(\Omega) \subset \erre^{n+1}$ is the union of two convex cones. The group $\Gamma$ can be lifted to a subgroup $\overline{\Gamma}$ of $SL^{\pm}_{n+1}(\erre)$ preserving each of the convex cones. After this lift, if $\gamma \in \Gamma$, then $\lambda_1$ and $\lambda_{n+1}$ are real and positive.
\end{corollary}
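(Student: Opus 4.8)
The plan is to lift the domain $\Omega$ to a pair of opposite convex cones, then lift $\Gamma$ compatibly, and finally read off the signs of $\lambda_1$ and $\lambda_{n+1}$ from the attracting and repelling fixed points.

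First I would use proper convexity: choose a projective hyperplane $H$ disjoint from the compact set $\overline{\Omega}$ and a linear functional $\ell$ on $\erre^{n+1}$ whose projectivized zero set is $H$. In the affine chart $\{\ell = 1\} \subset \erre^{n+1}$ the domain $\Omega$ corresponds to a bounded convex open set $\Omega'$, and a direct check shows $\pi^{-1}(\Omega) = C_+ \sqcup C_-$, where $C_+ = \{tv \ |\ t > 0,\ v \in \Omega'\}$ is the open convex cone over $\Omega'$ and $C_- = -C_+$: indeed any $v$ with $\pi(v) \in \Omega$ satisfies $\ell(v) \neq 0$, and then $v/\ell(v)$ lies in $\Omega'$ or in $-\Omega'$ according to the sign of $\ell(v)$. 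The same computation gives $\pi^{-1}(\overline{\Omega}) = \overline{C_+} \cup \overline{C_-}$; moreover every nonzero vector of $\overline{C_+}$ has $\ell > 0$, so $\overline{C_+} \cap \overline{C_-} = \{0\}$. This is the first assertion, and these last two facts will be needed below.

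Next I would lift the group. For $\gamma \in \Gamma \subset PGL_{n+1}(\erre) = SL^{\pm}_{n+1}(\erre)/\{\pm \ident\}$, each of the two lifts of $\gamma$ to $SL^{\pm}_{n+1}(\erre)$ is a linear automorphism which, since $\gamma$ preserves $\Omega$, permutes the two cones $C_\pm$; hence it either preserves $C_+$ or interchanges $C_+$ and $C_-$. Since $-\ident$ interchanges them, exactly one of the two lifts preserves $C_+$. Let $\overline{\Gamma} \subset SL^{\pm}_{n+1}(\erre)$ be the set of all lifts $\overline{\gamma}$ of elements of $\Gamma$ with $\overline{\gamma}(C_+) = C_+$. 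Preserving a cone is stable under composition and inversion, so $\overline{\Gamma}$ is a subgroup, and the projection $\overline{\Gamma} \freccia \Gamma$ is a group isomorphism. Finally, if $\overline{\gamma}(C_+) = C_+$ then $\overline{\gamma}(C_-) = \overline{\gamma}(-C_+) = -C_+ = C_-$ by linearity, so every element of $\overline{\Gamma}$ preserves both cones. This is the second assertion.

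Finally I would pin down the signs. Fix $\gamma \in \Gamma$ with lift $\overline{\gamma} \in \overline{\Gamma}$. By the previous propositions $\gamma$ is proximal with attracting fixed point $x_\gamma \in \partial\Omega$, so $\lambda_1(\overline{\gamma})$ is real and its one-dimensional eigenspace is the line $L = \pi^{-1}(x_\gamma) \cup \{0\}$. Since $x_\gamma \in \overline{\Omega}$ we have $L \setminus \{0\} \subset \pi^{-1}(\overline{\Omega}) = \overline{C_+} \cup \overline{C_-}$, and since each of the two rays of $L \setminus \{0\}$ is connected while $\overline{C_+}$ and $\overline{C_-}$ are closed and meet only in $0$, each ray lies entirely in one of the two cones; thus we may pick an eigenvector $v_1 \in \overline{C_+} \setminus \{0\}$. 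As $\overline{\gamma}$ is a homeomorphism preserving $C_+$, it preserves $\overline{C_+}$, so $\lambda_1(\overline{\gamma}) v_1 = \overline{\gamma} v_1 \in \overline{C_+} \setminus \{0\}$; but $-v_1 \in \overline{C_-} \setminus \{0\}$ and $\overline{C_+} \cap \overline{C_-} = \{0\}$, which forces $\lambda_1(\overline{\gamma}) > 0$. The same argument applied to the repelling fixed point $y_\gamma = x_{\gamma^{-1}} \in \partial\Omega$, whose eigenline is the eigenspace of $\lambda_{n+1}(\overline{\gamma})$, gives $\lambda_{n+1}(\overline{\gamma}) > 0$; alternatively one invokes the fact recorded in the previous proposition that $\lambda_1$ and $\lambda_{n+1}$ of any lift have the same sign. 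I expect the only real work to be the topological bookkeeping around the cone decomposition — that $\pi^{-1}(\overline{\Omega})$ is precisely $\overline{C_+} \cup \overline{C_-}$ with $\overline{C_+} \cap \overline{C_-} = \{0\}$, and that a lift of a group element genuinely permutes $C_+$ and $C_-$ — since everything afterwards is a short formal deduction from the cone picture together with the proximality statements already established.
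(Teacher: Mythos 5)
Your proof is correct, and it is exactly the argument the paper leaves implicit: the corollary is stated there without proof as a consequence of the quoted propositions of Benoist (proximality of every $\gamma$, the attracting/repelling fixed points lying in $\partial\Omega$, and the equal signs of $\lambda_1$ and $\lambda_{n+1}$), and your cone decomposition, choice of the sign of the lift, and sign-tracking of the eigenvector in $\overline{C_+}$ is the standard way to fill that in. The only cosmetic slip is writing $\pi^{-1}(\overline{\Omega}) = \overline{C_+} \cup \overline{C_-}$, whose right-hand side contains the origin while the left does not; since you only ever use nonzero vectors, this does not affect the argument.
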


\begin{proposition}   \label{prop:hol in}
Let $\Gamma \subset PGL_{n+1}(\erre)$ be a torsion-free group dividing a strictly convex set $\Omega$. The set $\{x_\gamma \ |\ \gamma \in \Gamma \}$ is dense in $\partial \Omega$, hence $\Omega$ is actually determined by $\Gamma$.
\end{proposition}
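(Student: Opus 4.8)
The plan is to prove the stronger statement that $\Lambda:=\overline{\{x_\gamma\ |\ \gamma\in\Gamma\setminus\{\ident\}\}}$ equals all of $\partial\Omega$. Granting this, the last assertion is immediate: $x_\gamma$ is intrinsically the unique attracting fixed point in $\erre\pro^n$ of the element $\gamma\in PGL_{n+1}(\erre)$, so $\partial\Omega$, and hence the properly convex domain $\Omega$ it bounds, depends only on the subgroup $\Gamma$. I will use three consequences of strict convexity: every supporting hyperplane of $\overline\Omega$ meets it in exactly one point (a segment in that intersection would be a segment of $\partial\Omega$); $(\Omega,d_\Omega)$ has unique geodesics, which are straight segments; and if $z_k\to\xi\in\partial\Omega$ then, for each $R$, the Hilbert ball of radius $R$ about $z_k$ has Euclidean diameter tending to $0$. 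Finally, for $n\ge 2$ a strictly convex $\Omega$ is automatically irreducible — a nontrivial splitting of the cone over $\overline\Omega$ would produce segments in $\partial\Omega$ — so $\Gamma$ is strongly irreducible by the proposition relating irreducibility of $\Omega$ to strong irreducibility of $\Gamma$; the case $n=1$ is trivial, since there $\{x_\gamma\ |\ \gamma\ne\ident\}=\{x_\gamma,y_\gamma\}=\partial\Omega$ already.

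\emph{Step 1: every $\Gamma$-orbit accumulates at every point of $\partial\Omega$.} Since $\Gamma$ divides $\Omega$, fix a compact $K\subset\Omega$ with $\Gamma K=\Omega$ and a point $p_0\in K$. Given $\xi\in\partial\Omega$, pick $z_k\in\Omega$ with $z_k\to\xi$ and write $z_k=\gamma_k k_k$ with $\gamma_k\in\Gamma$ and $k_k\in K$. Then $d_\Omega(\gamma_k p_0,z_k)=d_\Omega(p_0,k_k)$ is at most the $d_\Omega$-diameter of $K\cup\{p_0\}$, so by the shrinking-ball property $\gamma_k p_0\to\xi$; in particular $\gamma_k\to\infty$ in $\Gamma$.

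\emph{Step 2: producing proximal elements with attracting point near a prescribed $\xi$.} By the standard dynamics of a divergent sequence of automorphisms of a strictly convex domain (see \cite{Be1}), after passing to a subsequence there are $\xi^{+},\xi^{-}\in\partial\Omega$ with $\gamma_k\to\xi^{+}$ uniformly on compact subsets of $\overline\Omega\setminus\{\xi^{-}\}$ and $\gamma_k^{-1}\to\xi^{-}$ uniformly on compact subsets of $\overline\Omega\setminus\{\xi^{+}\}$. Since $p_0$ lies in a compact subset of $\overline\Omega\setminus\{\xi^{-}\}$ we get $\gamma_k p_0\to\xi^{+}$, so by Step 1 $\xi^{+}=\xi$. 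If moreover $\xi^{-}=\xi$, replace $\gamma_k$ by $\gamma_k\delta$ for a fixed $\delta\in\Gamma$ and pass to a further subsequence: this leaves the attracting point equal to $\xi$ but turns the repelling point into $\delta^{-1}\xi^{-}$, and since $\Gamma$ is strongly irreducible the orbit $\Gamma\xi^{-}$ is infinite, so $\delta$ may be chosen with $\delta^{-1}\xi^{-}\ne\xi$. We may therefore assume $\xi^{-}\ne\xi=\xi^{+}$. Choose closed neighbourhoods $\overline U\ni\xi$ and $\overline W\ni\xi^{-}$ in $\overline\Omega$ with $\overline U\cap\overline W=\emptyset$. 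Then $\overline U\subset\overline\Omega\setminus W$, which is compact and avoids $\xi^{-}$, so for all large $k$ we have $\gamma_k(\overline U)\subseteq U$. Since $\gamma_k\in\Gamma$ is proximal, $\gamma_k^m q\to x_{\gamma_k}$ for every $q$ off the repelling hyperplane of $\gamma_k$; picking such a $q$ in $U\cap\Omega$, all the points $\gamma_k^m q$ stay in $\overline U$, hence $x_{\gamma_k}\in\overline U$. As $\overline U$ ranges over a neighbourhood basis of $\xi$, this shows $\xi\in\Lambda$.

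Since $\xi\in\partial\Omega$ was arbitrary, $\Lambda=\partial\Omega$, and the proposition follows as explained in the first paragraph. The delicate point is Step 2: one must make precise the North--South behaviour both of the divergent sequence $(\gamma_k)$ and of each individual proximal $\gamma_k$ so that the iterates are trapped in $\overline U$ and the attracting fixed point is forced inside it, and one must deal with the degenerate coincidence $\xi^{+}=\xi^{-}$, which is exactly where strong irreducibility — available because strict convexity already forces irreducibility — is needed.
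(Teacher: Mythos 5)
Your argument is correct, but it is worth saying how it sits relative to the paper: the paper offers no argument at all for this proposition, it simply cites Benoist \cite{Be1}, whereas you reconstruct the standard proof explicitly. Your route -- cocompactness plus invariance of the Hilbert metric to make orbit points track an arbitrary boundary point $\xi$, then the attracting/repelling dynamics of a divergent sequence in $\aut(\Omega)$ to trap a neighbourhood $\overline U$ of $\xi$, and finally proximality of the individual $\gamma_k$ to force $x_{\gamma_k}\in\overline U$ -- is essentially Benoist's mechanism, and your one genuinely non-elementary input (the north--south behaviour of divergent sequences, together with the shrinking of bounded Hilbert balls near the boundary) is delegated to the same reference the paper cites for the whole statement, so there is no circularity and the level of rigor matches the paper's. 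What your version buys is transparency: it isolates exactly which facts about strictly convex divisible domains are used, and it treats honestly the two points a careless reader might miss, namely the degenerate coincidence $\xi^{+}=\xi^{-}$ (your fix by composing with a fixed $\delta$ is correct; note that plain irreducibility of $\Gamma$, which you already derived from strict convexity for $n\ge 2$, suffices to find $\delta$ not fixing $\xi^{-}$ -- the full strength of strong irreducibility and the infinitude of the orbit are not needed) and the trivial case $n=1$. Two cosmetic remarks: the invoked dynamical lemma does hold for merely strictly convex $\Omega$ (a limit of normalized lifts has rank one, its kernel hyperplane cannot meet $\Omega$ since the cone over $\Omega$ is salient, and strict convexity then forces the kernel to meet $\overline\Omega$ in the single point $\xi^{-}$), so you are not secretly using $C^{1}$ regularity of $\partial\Omega$; and in the last step one should say explicitly that $\Omega$ is recovered from $\partial\Omega$ as the interior of its convex hull in an affine chart containing $\overline\Omega$.
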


\begin{proof}
See \cite{Be1}.
\end{proof}

\begin{proposition}  \label{prop:abs irred}
Let $\Gamma \subset PGL_{n+1}(\erre)$ be a group dividing a strictly convex set $\Omega$. Then $\Gamma$ is absolutely irreducible, i.e. it has no nontrivial invariant subspaces in $\ci\pro^n$.
\end{proposition}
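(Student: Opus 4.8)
The plan is to reduce absolute irreducibility to two ingredients: that $\Gamma$ acts irreducibly on $\erre^{n+1}$, and that $\Gamma$ contains a proximal element whose dominant eigenvalue is real and simple. A real irreducible representation fails to be absolutely irreducible exactly when its commutant is $\ci$ or the quaternions $\mathbb{H}$, and the presence of such a proximal element will rule this out. For the first ingredient: since $\Omega$ is strictly convex (and $n\ge 2$, which is forced by the hypotheses on $\Gamma$), the cone over $\overline\Omega$ admits no direct-sum decomposition into two nontrivial convex cones lying in complementary subspaces, because such a decomposition would produce a genuine projective segment inside $\partial\Omega$; hence $\Omega$ is irreducible, and by the proposition identifying irreducibility of $\Omega$ with strong irreducibility of $\Gamma$, the group $\Gamma$ acts irreducibly on $\erre^{n+1}$.

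For the second ingredient, by Selberg's lemma the finitely generated linear group $\Gamma$ has a finite-index torsion-free subgroup $\Gamma_0$, which still divides $\Omega$; by the proposition on torsion-free groups dividing a strictly convex set, every nontrivial $\gamma\in\Gamma_0\subset\Gamma$ is proximal. For such a $\gamma$ the eigenvalue $\lambda_1(\gamma)$ of maximal modulus is real and strictly dominant, hence a simple root of the characteristic polynomial, so its eigenspace $E_1\subset\erre^{n+1}$ is a one-dimensional real subspace. Since $\Gamma$ acts irreducibly on $\erre^{n+1}$, Schur's lemma makes $D=\mathrm{End}_\Gamma(\erre^{n+1})$ a finite-dimensional real division algebra, so $D\cong\erre$, $\ci$, or $\mathbb{H}$; if $D\ne\erre$ it contains an element $J$ with $J^2=-\ident$, and since $J$ commutes with the chosen proximal element it preserves the real line $E_1$, yielding an $\erre$-linear endomorphism of a one-dimensional real space that squares to $-\ident$, which is impossible. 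Therefore $D=\erre$, the complexification $\ci^{n+1}$ is $\Gamma$-irreducible, and $\Gamma$ has no nontrivial invariant subspace in $\ci\pro^n$.

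The step I expect to require the most care is the passage from strict convexity of $\Omega$ to irreducibility of $\Omega$ (so that the stated equivalence applies), together with the reduction to a torsion-free finite-index subgroup when invoking proximality. A self-contained alternative that works directly over $\ci$ is available: an invariant $W\subset\ci^{n+1}$ must, for each nontrivial $\gamma\in\Gamma_0$, either contain the attracting eigenline $L_\gamma$ (because the projective limit of $\gamma^k v$ is the attracting fixed point $x_\gamma$ and $\pro(W)$ is closed) or be contained in the repelling hyperplane $H_\gamma$; using that $\{x_\gamma\}$ is dense in $\partial\Omega$ (Proposition \ref{prop:hol in}) and that $\partial\Omega$ spans $\erre^{n+1}$ forces $W=\ci^{n+1}$ in the first case, while applying the same density to the dual convex set $\Omega^*$ — which $\Gamma$ also divides, again strictly convexly since strict convexity of $\Omega$ is equivalent to Gromov hyperbolicity of $\Gamma$ — gives $\bigcap_\gamma H_\gamma=0$ and hence $W=0$ in the second.
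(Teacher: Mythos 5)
Your argument is correct, but it takes a genuinely different route from the paper. The paper's proof is essentially a two-line appeal to Benoist's Zariski-density theorem (Convexes divisibles II): the Zariski closure of $\Gamma$ is $PGL_{n+1}(\erre)$ unless $\Omega \simeq \iper^n$, in which case it is $O^+(1,n)$; in either case the linear span of a lift of $\Gamma$ in the matrix algebra contains the span of this closure, hence is the whole matrix algebra, which is exactly absolute irreducibility. You instead stay with the lighter facts already quoted earlier in the paper: strict convexity forces $\Omega$ to be irreducible (a cone decomposition produces a segment in $\partial\Omega$ once $n \geq 2$), the Vey--Benoist equivalence then gives (strong) irreducibility of $\Gamma$ over $\erre$, and proximality of the elements of a torsion-free finite-index subgroup supplies a simple real dominant eigenvalue whose one-dimensional eigenspace rules out commutant $\ci$ or $\mathbb{H}$; so the commutant is $\erre$, the representation is of real type, and absolute irreducibility follows. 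What each approach buys: yours needs only Convexes divisibles I plus Schur/Frobenius and is closer to self-contained, while the paper leans on the much deeper classification of the Zariski closure but gets that stronger conclusion (the closure itself) for free and in two lines.

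Minor caveats, none fatal. The statement (and the paper's own proof) implicitly assumes $n \geq 2$: for $n=1$ an interval is strictly convex and divisible by $\ze$, and the conclusion fails; so your parenthetical that $n\geq 2$ is ``forced by the hypotheses'' is really a standing convention rather than a consequence of the proposition as stated. Selberg's lemma needs $\Gamma$ finitely generated, which holds because $\Gamma$ acts properly and cocompactly on $(\Omega, d_{\mathrm{Hilbert}})$; and the Schur argument should be phrased for the preimage of $\Gamma$ in $GL_{n+1}(\erre)$, though this changes nothing. Your closing ``alternative'' sketch is looser than your main proof: the dichotomy $L_\gamma \subset W$ or $W \subset H_\gamma$ holds element by element, so the two cases can mix across $\gamma$, and one needs something like minimality of the $\Gamma$-action on $\partial\Omega$ to close that argument; since it is only offered as an alternative, this does not affect the proof.
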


\begin{proof}
If $\Omega \not\simeq \iper^n$, by \cite{Be2}, the Zariski closure of $\Gamma$ is $PGL_{n+1}(\erre)$. If $\Omega \simeq \iper^{n}$, the Zariski closure of $\Gamma$ is $O^+(1,n)$. The linear span of $\overline{\Gamma} \subset M_n(\erre)$ contains the Zariski closure, hence it is $M_n(\erre)$, hence the group $\Gamma$ is absolutely irreducible.
\end{proof}

Let $\Omega \subset \erre\pro^n$ be a properly convex set. We denote by 
$$d:\Omega \times \Omega \freccia \erre_{\geq 0}$$
The Hilbert metric on $\Omega$, a metric that is invariant by projective automorphisms of $\Omega$. For a reference on this metric and on the following facts see \cite[sect. 7]{Ki01}.

Let $M = \Omega / \Gamma$ be a strictly convex projective manifold. The quotient $M$ inherits a Finslerian metric from $\Omega$. If $g$ is a closed geodesic in $M$, its lift $\widetilde{g}$ is a segment of projective line, and it is invariant for an element $\gamma \in \Gamma$. The element $\gamma$ acts on $g$ as a translation of length $\ell_\gamma$. The endpoints of $\widetilde{g}$ in $\partial \Omega$ are precisely the attracting and repelling fixed points of $\gamma$, $x_\gamma$ and $y_\gamma$. The translation length $\ell_\gamma$ can be calculated from the eigenvalues $\lambda_1$ and $\lambda_{n+1}$ as
$$\ell_\gamma = \log_e\left(\frac{\lambda_1}{\lambda_n}\right) $$
The function $\ell:\Gamma \freccia \erre_{>0}$ is called the \nuovo{marked length spectrum} of $M$. The marked length spectrum determines the marked projective structure on $M$, see \cite[thm. 2]{Ki01}.

\subsection{Spaces of convex projective structures}

Let $M$ be a closed $n$-manifold such that the fundamental group $\pi_1(M)$ has trivial virtual center, it is Gromov hyperbolic, and it is torsion free. In particular $M$ is orientable, as a non-orientable manifold has an element of the fundamental group of order $2$.

For example every closed hyperbolic $n$-manifold whose fundamental group is torsion-free satisfies the hypotheses.

As usual we denote by $\mathcal{D}_{\erre\pro^n}(M)$ and $\mathcal{T}_{\erre\pro^n}(M)$ the spaces of based and marked projective structures on $M$.

We denote by $\mathcal{D}_{\erre\pro^n}^c(M) \subset \mathcal{D}_{\erre\pro^n}(M)$ and $\mathcal{T}_{\erre\pro^n}^c(M) \subset \mathcal{T}_{\erre\pro^n}(M)$ the subsets corresponding to convex projective structures on $M$, that are automatically strictly convex as $\pi_1(M)$ is Gromov hyperbolic. 

\begin{proposition}[{[Koskul openness theorem]}]
The subsets $\mathcal{D}_{\erre\pro^n}^c(M)\subset \mathcal{D}_{\erre\pro^n}(M)$ and $\mathcal{T}_{\erre\pro^n}^c(M)\subset \mathcal{T}_{\erre\pro^n}(M)$ are open. 
\end{proposition}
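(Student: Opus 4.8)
The plan is to reduce the openness of $\mathcal{T}_{\erre\pro^n}^c(M)$ to a statement about holonomy representations, using that being the holonomy of a convex projective structure is, by the work of Benoist, detected by Zariski-density together with an open (in the algebraic sense) condition on the geometry of the divided domain. First I would recall that by the deformation theorem the holonomy map $\hol_{\mathcal{D}}:\mathcal{D}_{\erre\pro^n}(M)\freccia \homom(\pi_1(M,s_0),PGL_{n+1}(\erre))$ is a local homeomorphism; moreover, since $\pi_1(M)$ is Gromov hyperbolic with trivial virtual center, any convex holonomy is strongly irreducible and hence stable, so after restricting to the stable locus the induced map $\hol_{\mathcal{T}}:\mathcal{T}_{\erre\pro^n}^{s}(M)\freccia \homom(\pi_1(M,s_0),PGL_{n+1}(\erre))^{s}/PGL_{n+1}(\erre)$ is also a local homeomorphism, as noted in the excerpt. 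Therefore it suffices to show that the set of $\rho\in\homom(\pi_1(M,s_0),PGL_{n+1}(\erre))$ that arise as holonomies of (strictly) convex projective structures on $M$ is an open, conjugation-invariant subset of $\homom(\pi_1(M),PGL_{n+1}(\erre))$: pulling this open set back along the local homeomorphism $\hol_{\mathcal{D}}$ gives openness of $\mathcal{D}_{\erre\pro^n}^c(M)$, and passing to the quotient gives openness of $\mathcal{T}_{\erre\pro^n}^c(M)$.

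Next I would identify the relevant open condition on representations. The key input is Benoist's theorem (the same body of work cited for Proposition~\ref{prop:abs irred} and Proposition~\ref{prop:hol in}): if $\rho_0$ is the holonomy of a convex projective structure on the closed manifold $M$, then $\rho_0(\pi_1(M))$ divides a properly convex open set $\Omega_0$, and Benoist proves that the set of discrete, faithful, convex-cocompact-type representations near $\rho_0$ — equivalently, those whose image still divides some properly convex domain — is open in $\homom(\pi_1(M),PGL_{n+1}(\erre))$. Concretely, one uses that $\rho_0$ is a quasi-isometric embedding onto a group dividing $\Omega_0$, that divisibility is an open property by Benoist's stability results (a nearby $\rho$ still acts cocompactly on a nearby properly convex domain by a structural-stability / Ehresmann–Thurston type argument applied inside the convex projective setting), and that the quotient is again a closed manifold diffeomorphic to $M$ because the deformation is through $\erre\pro^n$-structures on the fixed manifold. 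Since $\pi_1(M)$ is Gromov hyperbolic, any such divided domain is automatically strictly convex by the proposition of Benoist quoted above, so the convex locus and the strictly convex locus coincide and there is no extra condition to impose.

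Assembling: let $U\subset\homom(\pi_1(M),PGL_{n+1}(\erre))$ be this open conjugation-invariant set of ``divisible'' holonomies; by the previous paragraph $\hol_{\mathcal{D}}^{-1}(U)$ is open in $\mathcal{D}_{\erre\pro^n}(M)$ and, by the very definition of developing pair, a based structure $(f,\rho)$ lies over $U$ exactly when it is convex (the developing map of a structure with divisible holonomy on a closed manifold is a diffeomorphism onto the properly convex domain), so $\mathcal{D}_{\erre\pro^n}^c(M)=\hol_{\mathcal{D}}^{-1}(U)$ is open; since the quotient maps $\mathcal{D}'_{\erre\pro^n}(M)\freccia\mathcal{D}_{\erre\pro^n}(M)\freccia\mathcal{T}_{\erre\pro^n}(M)$ are open and $\mathcal{D}_{\erre\pro^n}^c(M)$, $\mathcal{T}_{\erre\pro^n}^c(M)$ are saturated for these maps, openness propagates to $\mathcal{T}_{\erre\pro^n}^c(M)$.

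The main obstacle is the second paragraph: proving that divisibility of the image is an open condition on $\rho$, i.e.\ that a small deformation of a holonomy dividing $\Omega_0$ still divides a properly convex domain with quotient diffeomorphic to $M$. This is exactly the content of Koszul's openness theorem (the name attached to the proposition) and of Benoist's refinements; I would invoke it as a black box rather than reprove it, the delicate points being the control of proper convexity (which is not automatically preserved under deformation of an arbitrary reducible or parabolic representation, but is here because $\pi_1(M)$ is hyperbolic and the limit representation is already convex-cocompact in the projective sense) and the identification of the new quotient with $M$ via an Ehresmann–Thurston isotopy.
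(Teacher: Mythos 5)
There is a genuine gap in the reduction. Your argument rests on the identity $\mathcal{D}_{\erre\pro^n}^c(M)=\hol_{\mathcal{D}}^{-1}(U)$, where $U$ is the set of representations whose image divides a properly convex open set, i.e.\ on the claim that convexity of a structure is detected by its holonomy alone. That claim is false: already for $n=2$, grafting a closed hyperbolic (hence convex projective) surface along a simple closed geodesic produces a projective structure whose holonomy is unchanged --- its image still divides the Klein disc --- but whose developing map is neither injective nor onto a convex domain. Hence $\hol_{\mathcal{D}}^{-1}(U)$ strictly contains $\mathcal{D}_{\erre\pro^n}^c(M)$ in general, and openness of $U$ in $\homom(\pi_1(M),PGL_{n+1}(\erre))$, pulled back through the local homeomorphism $\hol_{\mathcal{D}}$, only gives openness of this larger set, not of the convex locus. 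The actual content of Koszul's openness theorem is a statement at the level of structures, not of representations: a projective structure sufficiently close in $\mathcal{D}_{\erre\pro^n}(M)$ to a convex one is itself convex, and this requires controlling the developing map (Koszul's convexity argument), not just the holonomy.

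There is also a circularity problem: the ``black box'' you invoke in the final step is, by your own account, Koszul's openness theorem --- which is precisely the proposition to be proved. The paper itself gives no independent argument either (it simply cites Kim, section 6, where Koszul's theorem is treated), so citing the theorem is acceptable; but then the whole first part of your proposal is superfluous, and what remains of it is misleading, because the logical order in the paper is the reverse of yours: the representation-level statement that the set $\mathcal{F}_{\pi_1(M)}$ of dividing representations is open in $\homom(\pi_1(M),SL_{n+1}(\erre))$ is \emph{deduced} from this proposition (as a union of the open sets $\mathcal{D}_{\erre\pro^n}^c(N)$), with closedness coming from Benoist; it is not an available input from which the proposition can be recovered.
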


\begin{proof}
See \cite[sec. 6]{Ki01}.
\end{proof}

\begin{proposition}
The holonomy map restricted to $\mathcal{D}_{\erre\pro^n}^c(M)$ and $\mathcal{T}_{\erre\pro^n}^c(M)$ is injective, identifying these spaces with their image, an open subset of $\homom(\pi_1(M),PGL_{n+1}(\erre))$ and $\homom(\pi_1(M),PGL_{n+1}(\erre)) / PGL_{n+1}(\erre)$ respectively.
\end{proposition}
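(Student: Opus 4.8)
The statement bundles two things: injectivity of $\hol_{\mathcal{D}}$ on $\mathcal{D}_{\erre\pro^n}^c(M)$ and of $\hol_{\mathcal{T}}$ on $\mathcal{T}_{\erre\pro^n}^c(M)$, and the assertion that the images are open and the maps are homeomorphisms onto them. I would deduce the topological half from the deformation theorem and the Koszul openness theorem already recalled: $\hol_{\mathcal{D}}$ is a local homeomorphism, hence an open map, so its image on the open set $\mathcal{D}_{\erre\pro^n}^c(M)$ is open, and an injective local homeomorphism is a homeomorphism onto its image. For the quotient spaces I would first note that a group dividing a strictly convex set is absolutely irreducible (Proposition \ref{prop:abs irred}), hence strongly irreducible, hence a stable point for the $PGL_{n+1}(\erre)$-action on $\homom(\pi_1(M),PGL_{n+1}(\erre))$; therefore $\mathcal{T}_{\erre\pro^n}^c(M)$ lies in the locus where $\hol_{\mathcal{T}}$ is already known to be a local homeomorphism, and the same reasoning applies. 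Equivalently, over the convex locus both $\mathcal{D}_{\erre\pro^n}^c(M)\to\mathcal{T}_{\erre\pro^n}^c(M)$ and $\homom^c\to\homom^c/PGL_{n+1}(\erre)$ are proper free quotients by $PGL_{n+1}(\erre)$ (freeness because the centralizer of an absolutely irreducible subgroup is trivial by Schur's lemma), and $\hol_{\mathcal{D}}$ is equivariant, so injectivity and openness descend.

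The heart of the matter is injectivity, and its one essential geometric input is Benoist's theorem recorded in Proposition \ref{prop:hol in}: the convex domain $\Omega$ is recovered from the group $\Gamma$ dividing it. Suppose two convex projective structures on $M$ have the same holonomy $\rho$ (in the marked case, conjugate holonomies, which I would normalize to be equal by the $PGL_{n+1}(\erre)$-action, with no residual ambiguity since the centralizer of $\rho(\pi_1(M))$ is trivial). By Gromov-hyperbolicity of $\pi_1(M)$ both structures are strictly convex, so each developing map is a diffeomorphism $f_i\colon\widetilde M\freccia\Omega_i$ onto an open properly convex set divided by $\Gamma=\rho(\pi_1(M))$; by Proposition \ref{prop:hol in}, $\Omega_1=\Omega_2=:\Omega$. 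Then $\widetilde h:=f_1^{-1}\circ f_2\colon\widetilde M\freccia\widetilde M$ is a $\pi_1(M)$-equivariant diffeomorphism — a one-line computation from $\rho$-equivariance of $f_1$ and $f_2$ gives $\widetilde h(\gamma\cdot x)=\gamma\cdot\widetilde h(x)$ — so it descends to a diffeomorphism $h$ of $M$ inducing the identity automorphism of $\pi_1(M)$, with $f_2=f_1\circ\widetilde h$.

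The remaining point, which is the real obstacle and should be argued rather than asserted, is that $h$ is isotopic to the identity (possibly after composing with a projective automorphism of $M$, of which there are only finitely many, the homotopically trivial one being the identity), so that $(f_1,\rho)$ and $(f_2,\rho)$ represent the same point of $\mathcal{D}_{\erre\pro^n}^c(M)$ and hence of $\mathcal{T}_{\erre\pro^n}^c(M)$. Since $M=\Omega/\Gamma$ is aspherical ($\Omega$ is convex, so contractible, and $\Gamma$ acts freely) and $h$ induces the identity on $\pi_1(M)$, the map $h$ is homotopic to the identity; what is needed is that a self-diffeomorphism homotopic to the identity is isotopic to the identity. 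For surfaces and for $3$-manifolds this is classical, and these are the cases relevant to the applications of this paper; I would quote this, together with the deformation theorem and the Koszul openness theorem, as the ingredients, citing \cite{G} and \cite{Ki01} for the convex-structure-specific facts and Benoist's papers for Proposition \ref{prop:hol in}. Everything else is bookkeeping around the uniqueness of $\Omega$.
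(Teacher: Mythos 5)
Your core argument is the same as the paper's: the paper's entire proof of this proposition is the single observation that, by Proposition \ref{prop:hol in}, a strictly convex projective manifold $\Omega/\Gamma$ is determined by the group $\Gamma$, so the holonomy is injective; the openness statements are not re-proved there but come from the Koszul openness theorem and the deformation theorem recorded just before, exactly as you use them (your remark about stable points and free, proper $PGL_{n+1}(\erre)$-actions via absolute irreducibility is consistent with the paper but not needed for what it actually argues). So in substance you and the paper agree on the essential input: both developing maps are diffeomorphisms onto the same $\Omega$ recovered from $\rho(\pi_1(M))$.

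Where you go beyond the paper is in isolating the marking step: from $\Omega_1=\Omega_2$ you only get a self-diffeomorphism $h$ of $M$ with $f_2=f_1\circ\widetilde h$ and $h_*=\mathrm{id}$ on $\pi_1(M)$, and to conclude that the two structures coincide in $\mathcal{D}_{\erre\pro^n}^c(M)$ or $\mathcal{T}_{\erre\pro^n}^c(M)$ (which are quotients by diffeomorphisms \emph{isotopic} to the identity) you still need $h$ isotopic to the identity. You are right that this is the real residual point, and the paper's one-sentence proof simply does not address it. However, your proposed resolution --- homotopy implies isotopy, ``classical for surfaces and $3$-manifolds, the cases relevant to the applications'' --- does not match the paper's setting: the proposition and the whole compactification are stated for closed $n$-manifolds of arbitrary dimension (e.g.\ closed hyperbolic $n$-manifolds), and for general $n$ a self-diffeomorphism homotopic to the identity need not be isotopic to it. So as written your proof is complete only for $n\leq 3$; to cover the general case one must either supply an argument specific to this situation (for instance, working with the based spaces and the germ data, or redefining the deformation spaces up to homotopy rather than isotopy), and the paper itself offers none. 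In short: same approach, you are more careful, and the gap you flag is genuinely present in the paper's own proof rather than an artifact of your reconstruction.
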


\begin{proof}
By proposition \ref{prop:hol in}, a strictly convex projective manifold $\Omega / \Gamma$ is determined by the group $\Gamma$, hence the holonomy is injective.
\end{proof}

\begin{proposition}
The image of the map 
$$\pi_*:\homom(\pi_1(M),SL^{\pm}_{n+1}(\erre)) \freccia \homom(\pi_1(M),PGL_{n+1}(\erre))$$ 
contains the deformation space $\mathcal{D}_{\erre\pro^n}^c(M)$. This map has a canonical section, identifying $\mathcal{D}_{\erre\pro^n}^c(M)$ with an open subset of $\homom(\pi_1(M),SL^{\pm}_{n+1}(\erre))$. 
\end{proposition}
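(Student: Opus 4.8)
The plan is to produce the section by selecting, for each $\gamma\in\pi_1(M)$, a canonical lift of the holonomy of $\gamma$ to $SL^{\pm}_{n+1}(\erre)$. First I would record the group-theoretic input: the quotient $q:SL^{\pm}_{n+1}(\erre)\freccia PGL_{n+1}(\erre)$ is surjective (rescale a representative so that its determinant is $\pm1$) with kernel $\{\pm\ident\}$, hence a two-sheeted covering homomorphism, and so the induced map $\pi_*$ on representation varieties is a local homeomorphism. Thus the two preimages in $SL^{\pm}_{n+1}(\erre)$ of an element $g\in PGL_{n+1}(\erre)$ are $\pm\overline g$ for any lift $\overline g$, and if $g$ is proximal their leading eigenvalues $\pm\lambda_1(\overline g)$ are nonzero reals of opposite sign. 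For $\rho\in\mathcal{D}_{\erre\pro^n}^c(M)$, viewed via its injective holonomy as a point of $\homom(\pi_1(M),PGL_{n+1}(\erre))$, every $\rho(\gamma)$ is proximal (the proposition above on groups dividing a strictly convex set, after Benoist), so I can define $s(\rho)(\gamma)$ to be the unique lift of $\rho(\gamma)$ with positive leading eigenvalue.

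The next step is to check that $s(\rho)$ is a homomorphism and that $\rho$ lifts at all, that is, $\mathcal{D}_{\erre\pro^n}^c(M)\subset\Imm(\pi_*)$. This is precisely corollary \ref{corol:lift}: writing $\Gamma=\rho(\pi_1(M))$ for the group dividing the strictly convex domain $\Omega$, the cone $\pi^{-1}(\Omega)$ splits as a disjoint union of two convex cones and $\Gamma$ lifts to a subgroup $\overline\Gamma\subset SL^{\pm}_{n+1}(\erre)$ preserving each cone, with $\lambda_1>0$ (and $\lambda_{n+1}>0$) for every element of $\overline\Gamma$. Hence the homomorphic lift $\overline\rho:\gamma\mapsto\overline{\rho(\gamma)}$ furnished by corollary \ref{corol:lift} has all its values of positive leading eigenvalue, so it coincides with $\gamma\mapsto s(\rho)(\gamma)$; therefore $s(\rho)$ is a homomorphism and $\pi_*\circ s=\ident$. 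In particular the construction never refers to a choice between the two cones, so the section is canonical.

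It remains to see that $s$ is continuous with open image, so that it is a homeomorphism onto an open subset of $\homom(\pi_1(M),SL^{\pm}_{n+1}(\erre))$ with inverse the restriction of $\pi_*$. Fix a finite generating set $\{\gamma_j\}$ of $\pi_1(M)$ and a point $\rho_0$. On the open set of matrices in $SL^{\pm}_{n+1}(\erre)$ with $|\lambda_1|>|\lambda_2|$ the leading eigenvalue is continuous and stays positive near $s(\rho_0)(\gamma_j)$; since $q$ is a covering, for $\rho$ near $\rho_0$ there is a unique lift $\tau_j(\rho)$ of $\rho(\gamma_j)$ near $s(\rho_0)(\gamma_j)$, it depends continuously on $\rho$, and by the previous remark it has positive leading eigenvalue, hence equals $s(\rho)(\gamma_j)$; this gives continuity of $s$. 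For openness, take a neighborhood $\widetilde W$ of $s(\rho_0)$ that $\pi_*$ maps homeomorphically onto a neighborhood of $\rho_0$ contained in $\mathcal{D}_{\erre\pro^n}^c(M)$, small enough that $\lambda_1(\overline\rho(\gamma_j))>0$ for all $j$ and all $\overline\rho\in\widetilde W$; then any $\overline\rho\in\widetilde W$ and $s(\pi_*(\overline\rho))$ are lifts of the same representation agreeing on the $\gamma_j$, hence equal, so $\widetilde W\subset s(\mathcal{D}_{\erre\pro^n}^c(M))$. The step I expect to be the real content is not the covering-space bookkeeping but isolating the distinguished lift: it is convexity, through corollary \ref{corol:lift} together with proximality, that simultaneously guarantees a lift exists and singles one out canonically and continuously among the possibly many lifts of a given $\rho$.
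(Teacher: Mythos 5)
Your proposal is correct and takes essentially the same route as the paper: the paper's entire proof is the one-line appeal to corollary \ref{corol:lift}, which is exactly your central step of singling out the homomorphic lift with positive leading eigenvalue (using proximality of every holonomy element). You simply make explicit the covering-space bookkeeping --- continuity of the section and openness of its image --- which the paper leaves implicit.
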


\begin{proof}
By corollary \ref{corol:lift}, every element of $\mathcal{D}_{\erre\pro^n}^c(M)$ has a canonical lift to $\homom(\pi_1(M),SL^{\pm}_{n+1}(\erre))$.
\end{proof}

\begin{theorem}
The set $\mathcal{D}_{\erre\pro^n}^c(M)$ is canonically identified with a finite union of connected components of $\homom(\pi_1(M),SL_{n+1}(\erre))$, in particular it is a clopen semi-algebraic subset.
\end{theorem}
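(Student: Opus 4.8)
The plan is to show that $\mathcal{D}_{\erre\pro^n}^c(M)$, transported through the holonomy map and the canonical lift, is a clopen subset of $\homom(\pi_1(M),SL_{n+1}(\erre))$: openness is the Koszul openness theorem combined with the previous propositions, closedness is a theorem of Benoist, and a clopen subset of a real affine algebraic set is automatically a finite union of connected components.

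First I would check that the canonical section of $\pi_*$ supplied by the previous proposition actually takes values in $\homom(\pi_1(M),SL_{n+1}(\erre))$ and not merely in $\homom(\pi_1(M),SL^{\pm}_{n+1}(\erre))$. Let $(f,\rho)$ be the holonomy of a convex structure $\Omega/\Gamma$ and let $\overline{\Gamma}\subset SL^{\pm}_{n+1}(\erre)$ be the canonical lift preserving one of the two convex cones $C\subset\erre^{n+1}$ over $\Omega$ (corollary \ref{corol:lift}). Then $C$ is a connected open subset of $\erre^{n+1}$ which is a cone over $\Omega$, so $C$ is homeomorphic to $\Omega\times\erre_{>0}$; each $\overline{\gamma}$ restricts to a homeomorphism of $C$ that preserves the radial $\erre_{>0}$-direction and covers the map $\gamma$ on $\Omega$. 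Since $\Omega$ is simply connected and the quotient $M=\Omega/\Gamma$ is orientable, $\gamma$ preserves the orientation of $\Omega$, hence $\overline{\gamma}$ preserves the orientation of $\erre^{n+1}$ and $\det\overline{\gamma}=1$. As $SL_{n+1}(\erre)$ is a clopen subgroup of $SL^{\pm}_{n+1}(\erre)$, the set $\homom(\pi_1(M),SL_{n+1}(\erre))$ is clopen in $\homom(\pi_1(M),SL^{\pm}_{n+1}(\erre))$, so the previous proposition together with the Koszul openness theorem identifies $\mathcal{D}_{\erre\pro^n}^c(M)$ with an open subset of $\homom(\pi_1(M),SL_{n+1}(\erre))$.

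The heart of the argument is then to prove that this open subset is also closed. Suppose $\rho_k\to\rho_\infty$ in $\homom(\pi_1(M),SL_{n+1}(\erre))$ with each $\rho_k(\pi_1(M))$ dividing a properly convex open set $\Omega_k\subset\erre\pro^n$; after passing to a subsequence the closures $\overline{\Omega_k}$ converge in the Hausdorff topology on closed subsets of the compact space $\erre\pro^n$ to a closed convex set invariant under $\rho_\infty(\pi_1(M))$. One must show this limit is the closure of a properly (hence strictly) convex open set $\Omega_\infty$ still divided by $\rho_\infty(\pi_1(M))$ --- i.e.\ that no degeneration occurs (collapse of the interior, loss of proper convexity, $\Omega_\infty=\erre\pro^n$, or loss of cocompactness). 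This is exactly where the hypotheses that $\pi_1(M)$ is Gromov-hyperbolic, torsion-free and virtually centerless enter, via Benoist's work on divisible convex sets (\cite{Be1}, \cite{Be2}, \cite{Be3}); this closedness statement is the main external input and the step I expect to be the real obstacle. Granting it, $\rho_\infty\in\mathcal{D}_{\erre\pro^n}^c(M)$, so $\mathcal{D}_{\erre\pro^n}^c(M)$ is closed in $\homom(\pi_1(M),SL_{n+1}(\erre))$. Finally, being a clopen subset of the real affine algebraic set $\homom(\pi_1(M),SL_{n+1}(\erre))$, it is a union of connected components; since such a set has only finitely many connected components, each of them semi-algebraic (see \cite{BCR98}), $\mathcal{D}_{\erre\pro^n}^c(M)$ is a finite union of connected components, and in particular a clopen semi-algebraic subset.
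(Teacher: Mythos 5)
Your overall architecture (openness from the Koszul theorem and the preceding propositions, closedness from Benoist, and then the fact that a clopen subset of a real affine algebraic set is a finite union of semi-algebraic connected components) is the same as the paper's, and your verification that the canonical lift of the holonomy lands in $SL_{n+1}(\erre)$ rather than just $SL^{\pm}_{n+1}(\erre)$ is fine (the paper only asserts this in one line). The genuine gap is in the closedness step. What Benoist's theorem gives you --- and what your ``no degeneration'' statement gives you even if granted in full --- is only that the limit $\rho_\infty$ is again discrete, faithful and divides a properly convex open set $\Omega_\infty$; that is, $\rho_\infty$ lies in the set $\mathcal{F}_{\pi_1(M)}$ of \emph{all} dividing representations. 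It does not follow that $\rho_\infty\in\mathcal{D}_{\erre\pro^n}^c(M)$: the quotient $\Omega_\infty/\rho_\infty(\pi_1(M))$ is a closed aspherical manifold homotopy equivalent to $M$, but nothing you have granted says it is diffeomorphic to $M$, nor that the marking is realized by a diffeomorphism inducing the given identification of fundamental groups. A priori the limit could be the holonomy of a convex projective structure on some other closed manifold $N$ with $\pi_1(N)\cong\pi_1(M)$, or on $M$ with an incompatible marking. Your sentence ``Granting it, $\rho_\infty\in\mathcal{D}_{\erre\pro^n}^c(M)$'' silently assumes exactly the point at issue.

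The paper closes this gap structurally rather than by a direct limit argument: it introduces $\mathcal{F}_{\pi_1(M)}=\bigcup_N \mathcal{D}_{\erre\pro^n}^c(N)$, the union over all closed $n$-manifolds $N$ with fundamental group $\pi_1(M)$. Each $\mathcal{D}_{\erre\pro^n}^c(N)$ is open (Koszul), so $\mathcal{F}_{\pi_1(M)}$ is open; Benoist's theorem \cite[thm. 1.1]{Be3} says precisely that this union is closed; hence $\mathcal{F}_{\pi_1(M)}$ is a union of connected components. Since it is the \emph{disjoint} union of the open sets $\mathcal{D}_{\erre\pro^n}^c(N)$, each of these --- in particular $\mathcal{D}_{\erre\pro^n}^c(M)$ --- has open complement inside $\mathcal{F}_{\pi_1(M)}$ and is therefore clopen. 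To repair your argument you should either invoke Benoist's closedness in this aggregate form and run the disjoint-open-pieces argument, or supply an additional proof that the limiting quotient is diffeomorphic to $M$ with the correct marking; the latter is not provided by the compactness/convergence sketch you give, and in higher dimensions it cannot be waved away, since homotopy equivalent closed manifolds need not be diffeomorphic.
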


\begin{proof}
As $M$ is orientable, all the representations corresponding to convex structures on $M$ takes their values in $SL_{n+1}(\erre)$, hence $\mathcal{D}_{\erre\pro^n}^c(M)$ is an open subset of $\homom(\pi_1(M),SL_{n+1}(\erre))$.

Given a group $\Gamma$, let
$$\mathcal{F}_\Gamma = \{\rho \in \homom(\Gamma,SL_{n+1}(\erre)) \ |\ \rho \mbox{ is discrete and faithful and such that } $$
$$\rho(\Gamma) \mbox{ divides a properly convex open set } \Omega \subset \erre\pro^m \}$$

Note that
$$\mathcal{F}_{\pi_1(M)} = \bigcup_{N} \mathcal{D}_{\erre\pro^n}^c(N)$$
where $N$ varies among all closed $n$-manifold with $\pi_1(N) = \pi_1(M)$. Hence $\mathcal{F}_{\pi_1(M)}$ is open in $\homom(\pi_1(M),SL_{n+1}(\erre))$.

A theorem of Benoist (see \cite[thm. 1.1]{Be3}) states that the set $\mathcal{F}_{\pi_1(M)}$ is also closed in $\homom(\pi_1(M),SL_{n+1}(\erre))$,  hence it is a union of connected components of $\homom(\pi_1(M),SL_{n+1}(\erre))$. As $\mathcal{F}_{\pi_1(M)}$ is the disjoint union of the open sets $\mathcal{D}_{\erre\pro^n}^c(N)$, each of them is also closed, in particular $\mathcal{D}_{\erre\pro^n}^c(M)$ is a union of connected components of $\homom(\pi_1(M),SL_{n+1}(\erre))$. 

As $\homom(\pi_1(M),SL_{n+1}(\erre))$ is an affine algebraic set, it has a finite number of connected components, and each of them is a semi-algebraic set.
\end{proof}

\begin{corollary}
The set of closed $n$-manifolds $N$ with $\pi_1(N) = \pi_1(M)$ admitting a convex projective structure is finite.
\end{corollary}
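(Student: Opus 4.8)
The plan is to deduce the statement directly from the preceding theorem, and more precisely from the structural description of $\mathcal{F}_{\pi_1(M)}$ obtained in its proof. Write $\Gamma = \pi_1(M)$. The first step is to recall that $\homom(\Gamma, SL_{n+1}(\erre))$ is an affine algebraic set, hence has only finitely many connected components; since $\mathcal{F}_{\Gamma}$ is a union of some of these components (by the theorem), it too has only finitely many connected components.

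The second step is to invoke the decomposition established in the proof of the theorem, $\mathcal{F}_{\Gamma} = \bigsqcup_N \mathcal{D}_{\erre\pro^n}^c(N)$, where $N$ ranges over the diffeomorphism classes of closed $n$-manifolds with $\pi_1(N) = \Gamma$ and each $\mathcal{D}_{\erre\pro^n}^c(N)$ is a union of connected components of $\homom(\Gamma, SL_{n+1}(\erre))$. Here one point deserves attention: the fact that this union really is disjoint over diffeomorphism classes. To see it, note that a point of $\mathcal{D}_{\erre\pro^n}^c(N)$ is, via the (injective) holonomy, a discrete faithful representation $\rho$ whose image divides a properly convex open set $\Omega \subset \erre\pro^n$; by Proposition \ref{prop:hol in} the set $\Omega$ is determined by $\rho(\Gamma)$, and hence $N \cong \Omega/\rho(\Gamma)$ is recovered up to diffeomorphism from $\rho$ alone. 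So two such manifolds sharing a point of these sets are diffeomorphic, which is exactly the disjointness claimed.

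The last step: if $N$ is a closed $n$-manifold with $\pi_1(N) = \Gamma$ that admits a convex projective structure, then $\mathcal{D}_{\erre\pro^n}^c(N) \neq \emptyset$, so it contains at least one connected component of $\homom(\Gamma, SL_{n+1}(\erre))$. Since the sets $\mathcal{D}_{\erre\pro^n}^c(N)$ are pairwise disjoint and their union $\mathcal{F}_\Gamma$ has only finitely many connected components, only finitely many of them can be nonempty; this bounds the number of such diffeomorphism classes $N$ by the number of connected components of $\homom(\Gamma, SL_{n+1}(\erre))$, and in particular gives finiteness.

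I do not expect a genuine obstacle here: the argument is essentially a pigeonhole applied to the finite set of connected components of the representation variety. The only step requiring care is the disjointness of the decomposition of $\mathcal{F}_\Gamma$, i.e. the assertion that the holonomy of a convex projective structure determines the underlying closed manifold up to diffeomorphism, which rests on Proposition \ref{prop:hol in}.
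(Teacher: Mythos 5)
Your argument is correct and is essentially the paper's own: the corollary is read off from the proof of the preceding theorem, where $\mathcal{F}_{\pi_1(M)}$ is decomposed as the disjoint union of the clopen sets $\mathcal{D}_{\erre\pro^n}^c(N)$ inside the affine algebraic set $\homom(\pi_1(M),SL_{n+1}(\erre))$, which has only finitely many connected components, so a pigeonhole argument gives finiteness. Your additional justification of the disjointness via Proposition \ref{prop:hol in} (the holonomy determines $\Omega$ and hence $N$ up to diffeomorphism) is a point the paper leaves implicit, and it is the right one.
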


\begin{proof}
It follows from the proof of the theorem.
\end{proof}

Now consider the semi-geometric quotient of $\homom(\pi_1(M),SL_{n+1}(\erre))$, and its image in the character variety 
$$t:\homom(\pi_1(M),SL_{n+1}(\erre)) \freccia \overline{\charat}(\pi_1(M),SL_{n+1}(\erre))$$ 

\begin{proposition}
The image $t(\mathcal{D}_{\erre\pro^n}^c(M))$ can be identified with the space $\mathcal{T}_{\erre\pro^n}^c(M)$, it is a finite union of connected components (and, in particular, a clopen semi-algebraic subset) of $\overline{\charat}(\pi_1(M),SL_{n+1}(\erre))$.
\end{proposition}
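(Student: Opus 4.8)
The plan is to combine the preceding theorem, which realizes $\mathcal{D}^c_{\erre\pro^n}(M)$ (via holonomy and the canonical lift of Corollary \ref{corol:lift}) as a clopen semi-algebraic subset of $\homom(\pi_1(M),SL_{n+1}(\erre))$, with Corollary \ref{corol:real char} applied over the real closed field $\effe = \erre$ and $G = SL_{n+1}$. First I would record two facts about $\mathcal{D}^c_{\erre\pro^n}(M)$, seen inside $\homom(\pi_1(M),SL_{n+1}(\erre))$: it is invariant under conjugation by $PGL_{n+1}(\erre)$ (postcomposing a developing map with a projective transformation does not affect convexity of its image, and the $PGL_{n+1}(\erre)$-action is by conjugation on holonomies), and every representation in it is absolutely irreducible by Proposition \ref{prop:abs irred}, so that $\mathcal{D}^c_{\erre\pro^n}(M) \subset {\homom(\pi_1(M),SL_{n+1}(\erre))}_{a.i.r.}$. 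Moreover, under the holonomy identification, $\mathcal{T}^c_{\erre\pro^n}(M) = \mathcal{D}^c_{\erre\pro^n}(M)/PGL_{n+1}(\erre)$.

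Next I would deduce the clopen-ness and finiteness. Since $\mathcal{D}^c_{\erre\pro^n}(M)$ is simultaneously closed, open and $PGL_{n+1}(\erre)$-invariant, Corollary \ref{corol:real char}(2) shows that $t(\mathcal{D}^c_{\erre\pro^n}(M))$ is simultaneously closed and open in $\overline{\charat}(\pi_1(M),SL_{n+1}(\erre))$; as the latter is a semi-algebraic set it has only finitely many connected components, so $t(\mathcal{D}^c_{\erre\pro^n}(M))$ is a finite union of connected components, in particular a clopen semi-algebraic subset. For the bijection with $\mathcal{T}^c_{\erre\pro^n}(M)$: because $t(\mathcal{D}^c_{\erre\pro^n}(M)) \subset {\overline{\charat}(\pi_1(M),SL_{n+1}(\erre))}_{a.i.r.}$, and by Corollary \ref{corol:real char}(4) this locus is in natural bijection with ${\homom(\pi_1(M),SL_{n+1}(\erre))}_{a.i.r.}/PGL_{n+1}(\erre)$, the fibers of $t$ over $t(\mathcal{D}^c_{\erre\pro^n}(M))$ are exactly the $PGL_{n+1}(\erre)$-orbits, so $t$ induces a continuous bijection $\bar t : \mathcal{T}^c_{\erre\pro^n}(M) \to t(\mathcal{D}^c_{\erre\pro^n}(M))$.

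To promote $\bar t$ to a homeomorphism of (semi-algebraic) spaces I would show that $t$ restricted to $\mathcal{D}^c_{\erre\pro^n}(M)$ is a closed map: if $A$ is closed in $\mathcal{D}^c_{\erre\pro^n}(M)$ it is closed in $\homom(\pi_1(M),SL_{n+1}(\erre))$ because $\mathcal{D}^c_{\erre\pro^n}(M)$ is clopen, its saturation $PGL_{n+1}(\erre)\cdot A$ is closed because the action of $PGL_{n+1}(\erre)$ on the properly stable locus ${\homom(\pi_1(M),SL_{n+1}(\erre))}_{a.i.r.}$ is proper, and $PGL_{n+1}(\erre)\cdot A$ is closed and invariant, so Corollary \ref{corol:real char}(2) gives that $t(A) = t(PGL_{n+1}(\erre)\cdot A)$ is closed in $\overline{\charat}(\pi_1(M),SL_{n+1}(\erre))$, hence in $t(\mathcal{D}^c_{\erre\pro^n}(M))$. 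A continuous closed bijection is a homeomorphism, and since $t$ is a polynomial, hence semi-algebraic, map, the identification $\mathcal{T}^c_{\erre\pro^n}(M) \cong t(\mathcal{D}^c_{\erre\pro^n}(M))$ is semi-algebraic. The step I expect to require the most care is this last one — upgrading the tautological set-level bijection to an isomorphism of topological (semi-algebraic) spaces — since it is the only point that genuinely uses the properness and freeness of the $PGL_{n+1}(\erre)$-action on the absolutely irreducible locus together with the closed-image part of Corollary \ref{corol:real char}; everything else is a direct bookkeeping consequence of the results already established.
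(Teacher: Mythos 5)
Your core argument is the same as the paper's: absolute irreducibility (proposition \ref{prop:abs irred}) plus corollary \ref{corol:real char} applied to the clopen, conjugation-invariant set $\mathcal{D}_{\erre\pro^n}^c(M)$ gives that $t(\mathcal{D}_{\erre\pro^n}^c(M))$ is clopen in $\overline{\charat}(\pi_1(M),SL_{n+1}(\erre))$ (hence a finite union of its finitely many connected components), and part (4) of the corollary identifies the image with $\mathcal{D}_{\erre\pro^n}^c(M)/PGL_{n+1}(\erre) = \mathcal{T}_{\erre\pro^n}^c(M)$. Up to that point your write-up is correct and matches the paper, which in fact stops there.

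The extra step you add, upgrading the bijection to a homeomorphism by showing $t|_{\mathcal{D}_{\erre\pro^n}^c(M)}$ is a closed map, contains a genuine flaw. First, the saturation of a closed set under a proper (even proper and free) action need not be closed: take $\erre$ acting on $\erre^2$ by horizontal translation and $A$ the graph of $x \mapsto e^{-x^2}$; the action is proper and free, $A$ is closed, but $\erre\cdot A = \erre\times(0,1]$ is not closed. Properness buys an open quotient map with Hausdorff quotient, not a closed one. Second, even if the saturation were closed, corollary \ref{corol:real char}(2) applies only to \emph{semi-algebraic} invariant sets (its proof uses the curve-selection lemma), and an arbitrary closed $A\subset\mathcal{D}_{\erre\pro^n}^c(M)$ need not be semi-algebraic. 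If you want the topological identification, argue with open sets instead: for any invariant open $W$ and any orbit $O\subset W$, pick a small open semi-algebraic ball $B$ around a point of $O$ with $B\subset W$; its saturation $PGL_{n+1}(\erre)\cdot B$ is invariant, open, semi-algebraic (the group is semi-algebraic, so saturations of semi-algebraic sets are semi-algebraic) and satisfies $O\subset PGL_{n+1}(\erre)\cdot B\subset W$. Corollary \ref{corol:real char}(2) then shows the induced bijection is open, hence a homeomorphism, with no appeal to closedness of saturations.
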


\begin{proof}
By proposition \ref{prop:abs irred}, all representations in $\mathcal{D}_{\erre\pro^n}^c(M)$ are absolutely irreducible, hence the image $t(\mathcal{D}_{\erre\pro^n}^c(M))$ can be identified with the space $\mathcal{D}_{\erre\pro^n}^c(M) / PGL_{n+1} = \mathcal{T}_{\erre\pro^n}^c(M)$.

The clopen set $\mathcal{D}_{\erre\pro^n}^c(M)$ is invariant for the action by conjugation of $PGL_{n+1}(\erre)$, hence by corollary \ref{corol:real char} its image $\mathcal{T}_{\erre\pro^n}^c(M)$ is clopen in $\overline{\charat}(\pi_1(M),SL_{n+1}(\erre))$, in particular it is a semi-algebraic set.
\end{proof}

Now we present a result showing that the space $\mathcal{T}_{\erre\pro^n}^c(M)$ is often big enough to be interesting, as there are cases where we know a lower bound on the dimension of this space.

\begin{proposition}
Suppose that $M$ is a closed hyperbolic $n$-manifold containing $r$ two-sided disjoint connected totally geodesic hypersurfaces. Then $\dim \mathcal{T}_{\erre\pro^n}^c(M) \geq r$.
\end{proposition}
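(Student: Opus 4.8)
The plan is to produce an $r$-parameter family of convex projective structures on $M$ by the bending construction of Johnson--Millson. I would realise the hyperbolic structure as $\iper^n/\Gamma$ with $\Gamma\subset O^+(1,n)\subset PGL_{n+1}(\erre)$; since $\iper^n$ is strictly convex, the corresponding holonomy $\rho_0$ lies in $\mathcal{D}_{\erre\pro^n}^c(M)$, and by corollary \ref{corol:lift} together with the orientability of $M$ I regard $\rho_0$ as a representation into $SL_{n+1}(\erre)$. Each $\Sigma_i$ lifts to a totally geodesic $\iper^{n-1}\subset\iper^n$, spanning a hyperplane $W_i\subset\erre^{n+1}$ whose orthogonal complement is a spacelike line $L_i$, so $\erre^{n+1}=W_i\oplus L_i$. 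The subgroup $\Delta_i=\rho_0(\pi_1(\Sigma_i))$ preserves this splitting, and because $\Sigma_i$ is two-sided it acts trivially on $L_i$; hence the one-parameter subgroup $c_i:\erre\freccia PGL_{n+1}(\erre)$ acting as the identity on $W_i$ and by the scalar $e^t$ on $L_i$ (normalised into $SL_{n+1}(\erre)$) is non-central and centralises $\Delta_i$.

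Next I would cut $M$ along $\Sigma_1\sqcup\dots\sqcup\Sigma_r$, presenting $\pi_1(M)$ as the fundamental group of a graph of groups whose edge groups are the $\pi_1(\Sigma_i)$ and whose edge monodromies are trivial (two-sidedness is used here again). Since each $c_i(t_i)$ centralises $\Delta_i$, the Johnson--Millson construction gives, for $\mathbf t=(t_1,\dots,t_r)$ near $0\in\erre^r$, a representation $\rho_{\mathbf t}:\pi_1(M)\freccia SL_{n+1}(\erre)$ depending real-analytically on $\mathbf t$, obtained from $\rho_0$ by inserting the conjugations $c_i(t_i)$ along the corresponding edges, with $\rho_{\mathbf 0}=\rho_0$. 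Because $\mathcal{D}_{\erre\pro^n}^c(M)$ is a union of connected components of $\homom(\pi_1(M),SL_{n+1}(\erre))$ and in particular is open, continuity provides a neighbourhood $U$ of $0$ with $\rho_{\mathbf t}\in\mathcal{D}_{\erre\pro^n}^c(M)$ for all $\mathbf t\in U$; composing with the quotient map $t$ I obtain a real-analytic map $\Phi:U\freccia\mathcal{T}_{\erre\pro^n}^c(M)$ with $\Phi(0)=[\rho_0]$.

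To finish I would show that $d\Phi_0$ has rank $r$. By proposition \ref{prop:abs irred} the representation $\rho_0$ is absolutely irreducible, hence a stable point, so the character variety near $[\rho_0]$ has tangent space $H^1(\pi_1(M);\mathfrak{sl}_{n+1}(\erre)_{\adjoint\rho_0})$, and $d\Phi_0$ is the linear map $\mathbf t\mapsto\sum_i t_i[u_i]$, where $u_i$ is the bending cocycle along $\Sigma_i$, namely the coupling of the Poincar\'e dual of $\Sigma_i$ with the non-central element $\dot c_i(0)\in\mathfrak{sl}_{n+1}(\erre)$. The crux is that $[u_1],\dots,[u_r]$ are linearly independent in $H^1$: since the $\Sigma_i$ are pairwise disjoint and two-sided, for each $i$ there is a loop meeting $\Sigma_i$ transversally exactly once and disjoint from every $\Sigma_j$, $j\neq i$, and evaluating a relation $\sum_i\lambda_i u_i$ (modulo coboundaries) on these loops forces all $\lambda_i=0$. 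Granting this, $d\Phi_0$ has rank $r$, so near $0$ the image of $\Phi$ contains an $r$-dimensional submanifold of the affine space in which $\overline{\charat}(\pi_1(M),SL_{n+1}(\erre))$ is embedded; as $\mathcal{T}_{\erre\pro^n}^c(M)$ is a semi-algebraic set containing this submanifold, $\dim\mathcal{T}_{\erre\pro^n}^c(M)\geq r$.

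The hard part is exactly the linear independence of the bending classes $u_i$, and this is where the two hypotheses are spent: disjointness localises each $u_i$ near its hypersurface, so the transverse test loops separate the classes, and two-sidedness is what makes both the $c_i$ (with a coherent sign) and the test loops well defined. The other ingredients are soft: the $c_i$ exist because of the invariant splitting $W_i\oplus L_i$, the $\rho_{\mathbf t}$ stay convex projective for small $\mathbf t$ by the openness of $\mathcal{T}_{\erre\pro^n}^c(M)$ proved above, and the passage from a rank-$r$ real-analytic family to $\dim\geq r$ is a standard fact about semi-algebraic sets.
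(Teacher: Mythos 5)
Your overall strategy is the same one the paper uses: bend the hyperbolic holonomy \`a la Johnson--Millson along the $r$ disjoint totally geodesic hypersurfaces to get an $r$-parameter family near $x_0$, then use openness of the convex structures to stay inside $\mathcal{T}_{\erre\pro^n}^c(M)$. The difference is that the paper treats the hard part as a black box --- it simply cites \cite{JM87} for the statement that $\mathcal{T}_{\erre\pro^n}(M)$ contains an $r$-dimensional ball around the hyperbolic point $x_0$ --- whereas you try to reprove it, and it is exactly at the step you yourself single out as the crux, the linear independence of the bending classes, that your argument has a genuine gap.

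Two concrete problems. First, your test loops need not exist: if some $\Sigma_i$ is separating, its mod $2$ intersection number with every closed loop is zero, so there is no loop meeting $\Sigma_i$ transversally exactly once; disjointness of the $\Sigma_j$ does not save you, since separating hypersurfaces are allowed by the hypotheses. Second, even when such loops exist, ``evaluating $\sum_i\lambda_i u_i$ modulo coboundaries on these loops'' is not a well-defined operation: a coboundary $\delta v(\gamma)=v-\adjoint(\rho_0(\gamma))v$ does not vanish at a given group element $\gamma$, so values of a cocycle at finitely many elements cannot by themselves detect nontriviality or independence in $H^1(\pi_1(M);\mathfrak{sl}_{n+1}(\erre))$. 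To make this step honest you must argue as Johnson--Millson do (use the graph-of-groups structure: the classes vanish on the vertex groups, whose holonomy is Zariski dense in $O^+(1,n)$ and hence has no nonzero $\adjoint$-invariant vectors in $\mathfrak{sl}_{n+1}(\erre)$, and then compare values on edge data --- with a genuinely different treatment of the amalgamated, i.e.\ separating, case), or simply cite \cite{JM87} for the $r$-ball, which is what the paper does; with that citation the rest of your argument (lifting to $SL_{n+1}(\erre)$, openness of $\mathcal{D}_{\erre\pro^n}^c(M)$, passing to the quotient, and the semi-algebraic dimension count) matches the paper's proof.
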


\begin{proof}
The space $\mathcal{T}_{\erre\pro^n}(M)$ has a special point $x_0$ corresponding to the hyperbolic structure. In \cite{JM87} it is proven that in this case $\mathcal{T}_{\erre\pro^n}(M)$ contains a ball of dimension $r$ around $x_0$. As $\mathcal{T}_{\erre\pro^n}^c(M)$ is open in $\mathcal{T}_{\erre\pro^n}(M)$ and contains $x_0$, the dimension of $\mathcal{T}_{\erre\pro^n}^c(M)$ is at least $r$.
\end{proof}

\subsection{Compactification}   \label{subsez:compproj}

Let $M$ be a closed $n$-manifold such that the fundamental group $\pi_1(M)$ has trivial virtual center, it is Gromov hyperbolic, and it is torsion free (note that every closed hyperbolic $n$-manifold whose fundamental group is torsion-free satisfies the hypotheses). We want to construct a compactification of the space $\mathcal{T}^{c}_{\erre\pro^n}(M)$ of marked convex projective structures on $M$, using the structure of semi-algebraic set it inherits from its identification with a connected component of $\overline{\charat}(\pi_1(M),SL_n(\erre))$.

For every element $p \in \mathcal{T}^{c}_{\erre\pro^n}(M)$ and $\gamma \in \pi_1(M)$, we denote by $\ell_\gamma(p)$ the translation length of $\gamma$, see the end of subsection \ref{subsez:projective geometry}, and we denote by $e_\gamma(p)$ the ratio $\frac{\lambda_1}{\lambda_{n+1}}$ between the eigenvalues of maximum and minimum modulus of the conjugacy class of matrices $p(\gamma)$. Then the function
$$e_\gamma:\mathcal{T}^{c}_{\erre\pro^n}(M) \freccia \erre_{>0}$$
is a semi-algebraic function on $\mathcal{T}^{c}_{\erre\pro^n}(M)$, such that $\log_e(e_\gamma(p)) = \ell_\gamma(p)$.

Let $\gfamil = {\{e_\gamma\}}_{\gamma \in \pi_1(M)}$. As we said in subsection \ref{subsez:chars}, there exists a finite subset $A \subset \pi_1(M)$ such that the family of functions ${\{ I_\gamma \}}_{\gamma \in A}$ generates the ring of coordinates of $\charat(\pi_1(M),SL_{n+1}(\erre))$.

\begin{lemma}
Let $p \in \mathcal{T}^{c}_{\erre\pro^n}(M)$ and $\gamma \in \pi_1(M)$. Denote by $\lambda_1, \dots, \lambda_{n+1}$ the complex eigenvalues of $p(\gamma)$, with non-increasing absolute values. Then:
$$|\trace(p(\gamma))| \leq (n+1) \frac{\lambda_1}{\lambda_{n+1}} $$
\end{lemma}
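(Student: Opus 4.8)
The plan is to prove the estimate by the triangle inequality for the trace, combined with two facts recalled in subsection \ref{subsez:projective geometry}: that $p(\gamma)$ is (a conjugacy class in) $SL_{n+1}(\erre)$, so $\det p(\gamma)=1$ and the product of its complex eigenvalues is $1$; and that the extreme eigenvalues $\lambda_1$ and $\lambda_{n+1}$ are real and positive, so that $\lambda_1/\lambda_{n+1}$ is a genuine positive real (Corollary \ref{corol:lift}).

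First I would write $\trace(p(\gamma))=\sum_{i=1}^{n+1}\lambda_i$ and apply the triangle inequality to obtain $|\trace(p(\gamma))|\le\sum_{i=1}^{n+1}|\lambda_i|$. Since the eigenvalues are ordered with non-increasing absolute value, $|\lambda_i|\le|\lambda_1|=\lambda_1$ for every $i$, whence $|\trace(p(\gamma))|\le(n+1)\lambda_1$.

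Next I would bound $\lambda_{n+1}$ from above. Because $\det p(\gamma)=1$, we have $\prod_{i=1}^{n+1}|\lambda_i|=1$, and since $\lambda_{n+1}=|\lambda_{n+1}|$ is the smallest of the moduli, $\lambda_{n+1}^{\,n+1}\le\prod_{i=1}^{n+1}|\lambda_i|=1$; as $\lambda_{n+1}>0$ this gives $\lambda_{n+1}\le1$, i.e. $1/\lambda_{n+1}\ge1$. Multiplying the inequality $|\trace(p(\gamma))|\le(n+1)\lambda_1$ by $1/\lambda_{n+1}\ge1$ then yields $|\trace(p(\gamma))|\le(n+1)\lambda_1\le(n+1)\,\lambda_1/\lambda_{n+1}$, which is the claim.

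There is no serious obstacle here: the argument is just the triangle inequality together with the comparison of the minimal modulus against the geometric mean of all the moduli. The only points that require a word of justification are that $\det p(\gamma)=1$ rather than $\pm1$ — this is where orientability of $M$ and the identification of $\mathcal{D}_{\erre\pro^n}^c(M)$ with a union of connected components of $\homom(\pi_1(M),SL_{n+1}(\erre))$ are used — and that $\lambda_1,\lambda_{n+1}$ are positive reals, so that the right-hand side is a well-defined positive quantity, which is Corollary \ref{corol:lift}.
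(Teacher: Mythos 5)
Your proof is correct and follows essentially the same route as the paper: the triangle inequality gives $|\trace(p(\gamma))|\le(n+1)\lambda_1$, and $\det p(\gamma)=1$ forces the smallest modulus to satisfy $\lambda_{n+1}\le 1$, which yields the bound. You have merely spelled out the determinant step and the positivity of $\lambda_1,\lambda_{n+1}$ in slightly more detail than the paper does; no changes are needed.
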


\begin{proof}
First we recall that $\lambda_1$ and $\lambda_{n+1}$ are real and positive, hence the statement makes sense. As $|\lambda_1| \geq |\lambda_i|$, then $|\trace(p(\gamma))| \leq (n+1) \lambda_1$. As $p(\gamma)\in SL_{n+1}(\erre)$, then $\lambda_{n+1} \leq 1$.
\end{proof}

\begin{proposition}   \label{prop:proper families}
The family $\famil_A = {\{ e_\gamma \}}_{\gamma \in A}$ is proper.
\end{proposition}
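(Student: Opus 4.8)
The plan is to deduce properness of $E_{\famil_A}$ from the bound in the lemma above together with the Heine--Borel property of $\erre$.

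First I would recall that, by the choice of $A$, the trace functions $\{I_\gamma\}_{\gamma\in A}$ generate the coordinate ring of $\charat(\pi_1(M),SL_{n+1}(\erre))$, so that (as in subsection \ref{subsez:chars}) the character variety is realized as a closed semi-algebraic subset of $\erre^{\card(A)}$ via the embedding $x\mapsto {(I_\gamma(x))}_{\gamma\in A}$. Since $\mathcal{T}^{c}_{\erre\pro^n}(M)$ is clopen in $\overline{\charat}(\pi_1(M),SL_{n+1}(\erre))$ (by the proposition above), and the latter is closed in $\erre^{\card(A)}$ by corollary \ref{corol:real char}, the space $\mathcal{T}^{c}_{\erre\pro^n}(M)$ is a closed subset of $\erre^{\card(A)}$; under this identification the coordinate of a point $p$ indexed by $\gamma\in A$ is exactly $I_\gamma(p)=\trace(p(\gamma))$.

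Next I would check that $E_{\famil_A}$ is proper by verifying that the preimage of a compact set is compact. Let $K\subset{(\erre_{>0})}^{\card(A)}$ be compact. By continuity of $E_{\famil_A}$, the set $E_{\famil_A}^{-1}(K)$ is closed in $\mathcal{T}^{c}_{\erre\pro^n}(M)$, hence closed in $\erre^{\card(A)}$. Since $K$ is bounded, there is a constant $C>0$ with $e_\gamma(p)\le C$ for every $\gamma\in A$ and every $p\in E_{\famil_A}^{-1}(K)$. Applying the lemma above, $|I_\gamma(p)|\le(n+1)\,e_\gamma(p)\le(n+1)C$ for every such $p$ and every $\gamma\in A$; that is, every coordinate of the set $E_{\famil_A}^{-1}(K)\subset\erre^{\card(A)}$ is bounded in absolute value by $(n+1)C$. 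Thus $E_{\famil_A}^{-1}(K)$ is a closed and bounded subset of $\erre^{\card(A)}$, hence compact, and $E_{\famil_A}$ is proper.

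The only delicate point is the first step: one must be sure that the semi-algebraic embedding of $\mathcal{T}^{c}_{\erre\pro^n}(M)$ into affine space is realized precisely by the trace functions $I_\gamma$, $\gamma\in A$, so that the inequality $|\trace(p(\gamma))|\le(n+1)e_\gamma(p)$ controls every coordinate of the ambient space. Once that is in place the argument is immediate, and there is no genuine obstacle here, since we are working over the real field $\erre$ itself, where closed and bounded sets are compact.
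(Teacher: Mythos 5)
Your proof is correct and follows essentially the same route as the paper: the paper argues by contradiction with a sequence leaving every compact set, but the key ingredients are identical, namely the bound $|I_\gamma(p)|\leq (n+1)e_\gamma(p)$ from the lemma together with the fact that $\mathcal{T}^{c}_{\erre\pro^n}(M)$ is closed in the character variety embedded in $\erre^{\card(A)}$ by the trace functions $\{I_\gamma\}_{\gamma\in A}$, so that closed and bounded implies compact. Your reformulation as ``preimage of a compact set is closed and bounded'' is just the direct (non-sequential) phrasing of the same argument.
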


\begin{proof}
Suppose that $(x_n) \subset \mathcal{T}^{c}_{\erre\pro^n}(M)$ is a sequence that is not contained in any compact subset of $\mathcal{T}^{c}_{\erre\pro^n}(M)$. Suppose, by contradiction, that the image $E_{\famil_A}(x_n) \subset {(\erre_{>0})}^{\famil_A}$ is bounded. Then, by previous lemma, the functions ${\{ I_\gamma \}}_{\gamma \in A}$ are bounded on $(x_n)$, hence the sequence $(x_n)$ converges (up to subsequences) to a point of $\charat(\pi_1(M),SL_{n+1}(\erre))$. As $\mathcal{T}^{c}_{\erre\pro^n}(M)$ is closed in $\charat(\pi_1(M),SL_{n+1}(\erre))$, we have a contradiction.
\end{proof}

We have proved that the family $\gfamil$ is a proper family, hence it defines a compactification
$$\overline{\mathcal{T}^{c}_{\erre\pro^n}(M)}_\gfamil = \mathcal{T}^{c}_{\erre\pro^n}(M) \cup \partial_\gfamil \mathcal{T}^{c}_{\erre\pro^n}(M)$$

As the family $\gfamil$ is invariant for the action of the mapping class group of $M$, the action of the mapping class group extends continuously to an action on $\overline{\mathcal{T}^{c}_{\erre\pro^n}(M)}_\gfamil$.

Note that this compactification is constructed taking the limits of the functions $\log_e \circ e_\gamma$, i.e. the limits of the translation length functions $\ell_\gamma$.

In the companion paper \cite{A2} we investigate which objects can be used for the interpretation of the boundary points. Points of the interior part of $\overline{\charat}(\pi_1(M),SL_{n+1}(\erre))$ correspond to representations of $\pi_1(M)$ on $SL_{n+1}(\erre)$, or, geometrically, to actions of the group on a projective space of dimension $n$. Points of the boundary correspond instead to representations of the group in $SL_{n+1}(\cappa)$, where $\cappa$ is a non-archimedean field. In \cite{A2} we find a geometric interpretation of these representations, as actions of the group on tropical projective spaces of dimension $n$.

\section{Compactification of Teichm\"uller spaces}            \label{sez:Teichmuller}

\subsection{Teichm\"uller spaces and character varieties}      \label{subsez:Teichmuller}

Let $\overline{S} = \Sigma_g^k$ be an orientable compact surface of genus $g$, with $k \geq 0$ boundary components and with $\chi(S) < 0$, and let $S$ be its interior. As $S$ is orientable, the image of the holonomy of an hyperbolic structure on $S$ only contains orientation preserving maps, hence it takes values in $PSL_2(\erre)$. We will denote by $\mathcal{T}^{cf}_{\iper^2}(S)$ the deformation space of marked hyperbolic structures on $S$ that are complete and with finite volume. If $S$ is closed ($k=0$) this is just the deformation space of marked hyperbolic structures on $S$. The spaces $\mathcal{T}^{cf}_{\iper^2}(S)$ are usually called \nuovo{Teichm\"uller spaces}.

The holonomy map
$$\hol:\mathcal{T}^{cf}_{\iper^2}(S) \freccia \homom(\pi_1(S), PSL_2(\erre)) / PGL_2(\erre)$$
is injective and it identifies the space $\mathcal{T}^{cf}_{\iper^2}(S)$ with s subset of the quotient $\homom(\pi_1(M), PSL_2(\erre)) / PGL_2(\erre)$.

In \cite[III.1.9]{MS84} it is shown that the Teichm\"uller spaces can be embedded in $\charat(\pi_1(S),SL_2(\erre))$, with an identification that is natural only up to the action of the group $\homom(\pi_1(S), \ze_2)$. The image of this embedding is a connected component of a closed algebraic subset of $\charat(\pi_1(S),SL_2(\erre))$ (see \cite[III.1.9]{MS84}). In particular we can identify $\mathcal{T}^{cf}_{\iper^2}(S)$ with a closed semi-algebraic subset of $\overline{\charat}(\pi_1(S),SL_2(\erre))$.

Consider the family of functions $\gfamil = \{I_\gamma\}_{\gamma \in \pi_1(S)}$. These are functions on $\charat(\pi_1(S),SL_2(\erre))$. The immersion of the Teichm\"uller space $\mathcal{T}^{cf}_{\iper^n}(S)$ in $\charat(\pi_1(S),SL_2(\erre))$ is not canonical, it is well defined only up to the action of the group $\homom(\pi_1(S),\ze_2)$. If two points of $\charat(\pi_1(S),SL_2(\erre))$ have the same orbit for the action of $\homom(\pi_1(S),\ze_2)$, the values of functions $I_\gamma$ on these points coincide up to sign. Hence only $|I_c|$ is a well defined function on $\mathcal{T}^{cf}_{\iper^n}(S)$.

As the functions $I_\gamma$ never vanish on $\mathcal{T}^{cf}_{\iper^2}(S)$, and $\mathcal{T}^{cf}_{\iper^2}(S)$ is connected, they have constant sign. We define the \nuovo{positive trace functions} by choosing a point $x \in \mathcal{T}^{cf}_{\iper^2}(S)$ and then defining for every $\gamma\in\pi_1(S)$ the function $J_\gamma = \mbox{sign}(I_\gamma(x)) I_\gamma$.

On $\mathcal{T}^{cf}_{\iper^2}(S)$ we have $J_\gamma = |I_\gamma|$, hence the functions $J_\gamma$ are canonically well defined on $\mathcal{T}^{cf}_{\iper^2}(S)$, and they are polynomial functions on $\charat(\pi_1(S),SL_2(\erre))$, generating the ring of coordinates.

Positive trace functions are closely related to length functions:
$$\ell_\gamma([h]) = \inf_{\alpha} l_h(\alpha)$$
where $h$ is an hyperbolic metric on $S$, $[h]$ is the corresponding elements of $\mathcal{T}^{cf}_{\iper^2}(S)$, $l_h$ is the function sending a curve in its $h$-length, and the $\inf$ is taken on the set of all closed curves whose free homotopy class is $\gamma$. The relation between trace functions and length function is given by
$$J_\gamma([h]) = 2 \cosh(\frac{1}{2}\ell_c([h]))$$
In particular this implies that $|I_c([h])|\geq 2$ on $\mathcal{T}^{cf}_{\iper^2}(S)$.

\subsection{Compactification}      \label{subsez:compteich}

Let $\overline{S} = \Sigma_g^k$ be an orientable compact surface of genus $g$, with $k \geq 0$ boundary components and with $\chi(\overline{S}) < 0$, and let $S$ be its interior. We want to construct a compactification of the Teichm\"uller space $\mathcal{T}^{cf}_{\iper^2}(S)$, using the structure of semi-algebraic set it inherits from its identification with a closed semi-algebraic subset of $\overline{\charat}(\pi_1(S),SL_2(\erre))$ and $\charat(\pi_1(S),SL_2(\erre))$.

Let $\gfamil = {\{J_\gamma\}}_{\gamma \in \pi_1(S)}$, the positive trace functions, as defined in subsection \ref{subsez:Teichmuller}. As we said in subsection \ref{subsez:chars}, there exists a finite subset $A \subset \pi_1(S)$ such that the family of functions $\famil_A = {\{ J_\gamma \}}_{\gamma \in A}$ generates the ring of coordinates of $\charat(\pi_1(S),SL_2(\erre))$.

\begin{proposition}
Let $A \subset \pi_1(S)$ be a finite subset such that the family of functions ${\{ J_\gamma \}}_{\gamma \in A}$ generates the ring of coordinates of $\charat(\pi_1(S),SL_2(\erre))$. Then the family $\famil_A = {\{ J_\gamma \}}_{\gamma \in A}$ is proper.
\end{proposition}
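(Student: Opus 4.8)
The plan is to follow the argument of Proposition \ref{prop:proper families} almost verbatim, observing that here it becomes simpler: the family $\famil_A = \{J_\gamma\}_{\gamma \in A}$ consists precisely of a set of generators of the coordinate ring of $\charat(\pi_1(S),SL_2(\erre))$, so the comparison lemma needed in the $SL_{n+1}(\erre)$ case (bounding $|I_\gamma|$ in terms of the $e_\gamma$) is not required.

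First I would record the two inputs. Since the $J_\gamma$ with $\gamma \in A$ generate the coordinate ring, the map $c:\charat(\pi_1(S),SL_2(\erre)) \ni x \freccia (J_\gamma(x))_{\gamma \in A} \in \erre^{\card(A)}$ is the closed embedding of affine real algebraic sets attached to this generating set; in particular $c$ is a homeomorphism onto its image and the image is closed in $\erre^{\card(A)}$ for the order topology. I would also recall that, by \cite[III.1.9]{MS84}, $\mathcal{T}^{cf}_{\iper^2}(S)$ is a connected component of a closed algebraic subset of $\charat(\pi_1(S),SL_2(\erre))$, hence a closed subset of it, and that $E_{\famil_A}$ is just the restriction of $c$ to $\mathcal{T}^{cf}_{\iper^2}(S)$, which indeed takes values in $(\erre_{>0})^{\card(A)}$ since $J_\gamma = 2\cosh(\frac{1}{2}\ell_\gamma) > 0$ there.

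The core step is then a short compactness argument, which can be phrased exactly as in Proposition \ref{prop:proper families}: let $(x_n) \subset \mathcal{T}^{cf}_{\iper^2}(S)$ be a sequence not contained in any compact subset, and suppose by contradiction that $E_{\famil_A}(x_n)$ is bounded in $(\erre_{>0})^{\card(A)}$. Then $(c(x_n))$ is a bounded sequence contained in the closed set $c(\charat(\pi_1(S),SL_2(\erre))) \subset \erre^{\card(A)}$, so it has a subsequence converging in $\erre^{\card(A)}$ to a point of this closed set; transporting back through the homeomorphism $c$, the corresponding subsequence of $(x_n)$ converges in $\charat(\pi_1(S),SL_2(\erre))$ to some $x_\infty$, and $x_\infty \in \mathcal{T}^{cf}_{\iper^2}(S)$ because the latter is closed in $\charat(\pi_1(S),SL_2(\erre))$ — contradicting the choice of $(x_n)$. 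Hence $E_{\famil_A}$ is proper, i.e. $\famil_A$ is a proper family. (If one prefers to argue directly from the definition of properness, one picks a compact $L \subset (\erre_{>0})^{\card(A)}$ with $E_{\famil_A}^{-1}(L)$ non-compact and a sequence in it with no convergent subsequence in $\mathcal{T}^{cf}_{\iper^2}(S)$, and reaches the same contradiction.)

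I do not expect a genuine obstacle here; the only points requiring a little care are bookkeeping ones — that a generating set of the coordinate ring yields a map that is proper, not merely continuous, for the order topology; that a connected component of a closed algebraic set is again closed; and keeping straight which set is closed in which, so that the limit of the extracted subsequence is forced back into $\mathcal{T}^{cf}_{\iper^2}(S)$ rather than only into $\charat(\pi_1(S),SL_2(\erre))$.
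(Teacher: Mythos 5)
Your proposal is correct and follows essentially the same route as the paper: assume a sequence leaves every compact subset of $\mathcal{T}^{cf}_{\iper^2}(S)$ while $E_{\famil_A}$ stays bounded, use the fact that the $J_\gamma$, $\gamma \in A$, generate the coordinate ring (so bounded values force convergence, up to a subsequence, to a point of $\charat(\pi_1(S),SL_2(\erre))$), and conclude by the closedness of $\mathcal{T}^{cf}_{\iper^2}(S)$ in the character variety. Your extra bookkeeping (the closed embedding given by the generators and the careful statement of the properness contradiction) only makes explicit what the paper leaves implicit.
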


\begin{proof}
Suppose that $(x_n) \subset \mathcal{T}^{cf}_{\iper^2}(S)$ is a sequence that is not contained in any compact subset of $\mathcal{T}^{cf}_{\iper^2}(S)$. Suppose, by contradiction, that the image $E_{\famil_A}(x_n) \subset {(\erre_{>0})}^{\famil_A}$ is bounded, or, in other words, the functions ${\{ J_\gamma \}}_{\gamma \in A}$ are bounded on $(x_n)$, hence the sequence $(x_n)$ converges (up to subsequences) to a point of $\charat(\pi_1(S),SL_{n+1}(\erre))$. As $\mathcal{T}^{cf}_{\iper^2}(S)$ is closed in $\charat(\pi_1(S),SL_{n+1}(\erre))$, we have a contradiction.
\end{proof}

By the previous proposition, the family $\gfamil$ is proper, hence it defines a compactification
$$\overline{\mathcal{T}^{cf}_{\iper^2}(S)}_\gfamil = \mathcal{T}^{cf}_{\iper^2}(S) \cup \partial_\gfamil \mathcal{T}^{cf}_{\iper^2}(S)$$

As the family $\gfamil$ is invariant for the action of the mapping class group of $S$, the action of the mapping class group extends continuously to an action on $\overline{\mathcal{T}^{cf}_{\iper^2}(S)}_\gfamil$.

We want to prove that this compactification of the Teichm\"uller space $\mathcal{T}^{cf}_{\iper^2}(S)$ is the same as the one constructed by Thurston, see \cite{FLP} for details on Thurston's work. The boundary constructed by Thurston, here denoted by $\overline{\mathcal{T}^{cf}_{\iper^2}(S)}_T = \mathcal{T}^{cf}_{\iper^2}(S) \cup \partial_T \mathcal{T}^{cf}_{\iper^2}(S)$ can be described by using projectivized length spectra of measured laminations, as in \cite{FLP}. If $l$ is a measured lamination, for every element $\gamma \in \pi_1(S)$, the measure of $\gamma$, denoted by $I(l,\gamma)$, is the infimum of the measures of all the closed curves in $S$ that are freely homotopic to $\gamma$. The cone over the boundary, $C(\partial_T \mathcal{T}^{cf}_{\iper^2}(S))$, can be identified with the subset of $\erre^{\pi_1(S)}$ consisting of points of the form ${(I(l,\gamma))}_{\gamma \in \pi_1(S)}$. A sequence $(x_n) \subset \mathcal{T}^{cf}_{\iper^2}(S)$ converges to a point $x_0 \in \partial_T \mathcal{T}^{cf}_{\iper^2}(S)$ if and only if the projectivized length spectra ${[\ell_\gamma(x_n)]}_{\gamma\in \pi_1(S)}$ of the hyperbolic structures converges to the projectivized length spectra of the measured lamination corresponding to $x_0$.

\begin{proposition}      \label{prop:Thurston}
The compactification $\overline{\mathcal{T}^{cf}_{\iper^2}(S)}_\gfamil$ is isomorphic to the compactification $\overline{\mathcal{T}^{cf}_{\iper^2}(S)}_T$ constructed by Thurston. More precisely, the sets $C(\partial_T \mathcal{T}^{cf}_{\iper^2}(S))$ and $C(\partial_T \mathcal{T}^{cf}_{\iper^2}(S))$ coincide when identified with subsets of $\erre^{\pi_1(S)}$, and the notion of convergence of sequences on $\mathcal{T}^{cf}_{\iper^2}(S)$ to the boundary points also coincide.
\end{proposition}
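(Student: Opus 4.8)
The plan is to realize both compactifications as the closure of the image of the projectivized length spectrum, and to observe that the functions $\log_e J_\gamma$ used to build $\overline{\mathcal{T}^{cf}_{\iper^2}(S)}_\gfamil$ differ from $\tfrac12 \ell_\gamma$ only by a quantity that is bounded uniformly in $\gamma$. Concretely, recall from the end of subsection \ref{subsez:Teichmuller} that $J_\gamma([h]) = 2\cosh\!\big(\tfrac12 \ell_\gamma([h])\big)$ for every $[h] \in \mathcal{T}^{cf}_{\iper^2}(S)$ and every $\gamma \in \pi_1(S)$, so that
$$\log_e J_\gamma([h]) = \tfrac12\, \ell_\gamma([h]) + \log_e\!\big(1 + e^{-\ell_\gamma([h])}\big),$$
and the correction term satisfies $0 \le \log_e\!\big(1 + e^{-\ell_\gamma([h])}\big) \le \log_e 2$ independently of $\gamma$ and of $[h]$.

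First I would use this to compute the cone $C(\partial_\gfamil \mathcal{T}^{cf}_{\iper^2}(S))$. By the construction of section \ref{sez:compactification} one has $C(\partial_\gfamil \mathcal{T}^{cf}_{\iper^2}(S)) = \lim_{\longleftarrow} \ameba_0(E_\famil(\mathcal{T}^{cf}_{\iper^2}(S)))$ over $\famil \in P_\gfamil$, and for a finite proper subfamily $\famil = \{J_{\gamma_1}, \dots, J_{\gamma_m}\}$ a nonzero point of $\ameba_0(E_\famil(\mathcal{T}^{cf}_{\iper^2}(S))) \subset \erre^m$ is exactly a limit $\lim_n \lambda_n \big(\log_e J_{\gamma_i}(x_n)\big)_{i=1}^m$ for some sequence $(x_n)$ leaving every compact subset of $\mathcal{T}^{cf}_{\iper^2}(S)$ and some reals $\lambda_n > 0$ with $\lambda_n \to 0$ (this is the description of logarithmic limit sets from \cite{A1}, together with the identification of $L_\gfamil$ recalled in section \ref{sez:compactification}). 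Since $\mathcal{T}^{cf}_{\iper^2}(S)$ is closed in $\charat(\pi_1(S), SL_2(\erre))$ and $\famil_A$ is proper, along such a sequence $\big\| (\log_e J_{\gamma_i}(x_n))_{i=1}^m \big\| \to \infty$; combining this with the uniform bound above gives $\lambda_n \big(\log_e J_{\gamma_i}(x_n)\big)_i - \tfrac12 \lambda_n \big(\ell_{\gamma_i}(x_n)\big)_i \to 0$. Hence the families $\{\log_e J_\gamma\}$ and $\{\tfrac12 \ell_\gamma\}$ yield the same logarithmic limit set on each finite subfamily, and passing to the inverse limit the cone $C(\partial_\gfamil \mathcal{T}^{cf}_{\iper^2}(S))$, seen as a subset of $\erre^{\pi_1(S)}$ via the identification of section \ref{sez:compactification}, is precisely the set of points obtained, up to positive rescaling, as limits of length spectra $\big(\ell_\gamma(x_n)\big)_{\gamma \in \pi_1(S)}$ of divergent sequences of hyperbolic structures.

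Next I would invoke Thurston's description of the boundary $\partial_T \mathcal{T}^{cf}_{\iper^2}(S)$ (see \cite{FLP}): the map $[h] \mapsto \big[\big(\ell_\gamma([h])\big)_\gamma\big]$ into the projective classes of $\erre^{\pi_1(S)}$ is an embedding whose closure adds exactly the classes $\big[\big(I(l,\gamma)\big)_\gamma\big]$ of measured laminations $l$, and a divergent sequence $x_n$ tends to the boundary point of $l$ if and only if $\big[\big(\ell_\gamma(x_n)\big)_\gamma\big] \to \big[\big(I(l,\gamma)\big)_\gamma\big]$. Equivalently, $C(\partial_T \mathcal{T}^{cf}_{\iper^2}(S)) = \{ \big(I(l,\gamma)\big)_\gamma : l \text{ a measured lamination on } S \}$ is exactly the cone of limits of length spectra found in the previous step, so the two cones coincide as subsets of $\erre^{\pi_1(S)}$. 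For the convergence notions: a divergent sequence $x_n$ converges to $[y]$ in $\overline{\mathcal{T}^{cf}_{\iper^2}(S)}_\gfamil$ iff $\big[\big(\log_e J_\gamma(x_n)\big)_\gamma\big] \to [y]$, iff $\big[\big(\ell_\gamma(x_n)\big)_\gamma\big] \to [y]$ by the uniform bound, iff $x_n \to [y]$ in $\overline{\mathcal{T}^{cf}_{\iper^2}(S)}_T$; this identifies the two compactifications.

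The delicate point, and the one I expect to be the main technical obstacle, is matching the topologies on the two boundaries: $\partial_\gfamil \mathcal{T}^{cf}_{\iper^2}(S)$ is built as an inverse limit of spherical quotients of finite-dimensional logarithmic limit sets, whereas Thurston's boundary is described inside the projective classes of $\erre^{\pi_1(S)}$ with the product topology. One must check that the identification of $C(\partial_\gfamil \mathcal{T}^{cf}_{\iper^2}(S))$ with a subset of $\erre^{\pi_1(S)}$ given in section \ref{sez:compactification} carries the inverse-limit topology to the subspace topology, and that the resulting notion of convergence of sequences to boundary points agrees with Thurston's; the uniform two-sided estimate between $\log_e J_\gamma$ and $\tfrac12 \ell_\gamma$ is exactly what lets one reduce these verifications, coordinate by coordinate, to the finite-dimensional statements established above.
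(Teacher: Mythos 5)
Your argument is essentially the paper's own proof: the paper also reduces the statement to the relation $J_\gamma([h]) = 2\cosh\bigl(\tfrac12 \ell_\gamma([h])\bigr)$, which shows that the projectivized limits of $\bigl(\log_e J_\gamma(x_n)\bigr)_\gamma$ coincide with those of the length spectra $\bigl(\ell_\gamma(x_n)\bigr)_\gamma$, and then invokes Thurston's description of the boundary via projectivized length spectra of measured laminations. You simply spell out the uniform bound $0 \le \log_e J_\gamma - \tfrac12\ell_\gamma \le \log_e 2$ and the rescaling argument more explicitly than the paper does, which is a welcome refinement of the same approach rather than a different one.
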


\begin{proof}
If $(x_n) \subset \mathcal{T}^{cf}_{\iper^2}(S)$ is a sequence converging to $l$ in Thurston compactification, then ${I(l,\gamma)}_{\gamma \in \pi_1(S)}$ is the limit of the sequence ${[\ell_\gamma(x_n)]}_{\gamma\in \pi_1(S)}$, and, by the relation $J_\gamma([h]) = 2 \cosh(\frac{1}{2}\ell_c([h]))$ in subsection \ref{subsez:Teichmuller}, this is equal to the limit of the sequence ${[\log_e(J_\gamma(x_n))]}_{\gamma \in \pi_1(S)}$.
\end{proof}

\subsection{Existence of an injective family} \label{subsez:PL struc}

To show that our construction of the boundary defines a piecewise linear structure on it, we only need to find an injective family. First we show a sufficient condition for a family to be proper.

\begin{proposition}
Suppose that $A \subset \pi_1(S)$ has the property that the free homotopy classes of curves in $A$ \nuovo{fill up}, i.e. every free homotopy class of closed curves on the surface has non zero intersection number with at least one of those curves. Then $\famil_A = {\{ J_\gamma\}}_{\gamma \in A}$ is proper.
\end{proposition}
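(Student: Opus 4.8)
The plan is to reduce the statement to a classical property of Teichm\"uller space, through the dictionary between positive trace functions and hyperbolic lengths. By the relation $J_\gamma([h]) = 2\cosh\bigl(\tfrac12\ell_\gamma([h])\bigr)$ recalled in subsection~\ref{subsez:Teichmuller}, and since $s\mapsto 2\cosh(s/2)$ is a homeomorphism of $[0,+\infty)$ onto $[2,+\infty)$ while all the length functions $\ell_\gamma$ are non-negative, the map $E_{\famil_A}$ is proper if and only if the function
$$F:\mathcal{T}^{cf}_{\iper^2}(S)\freccia\erre_{\geq 0},\qquad F(x)=\max_{\gamma\in A}\ell_\gamma(x)$$
is proper; equivalently, I must show that every sequence $(x_n)$ leaving every compact subset of $\mathcal{T}^{cf}_{\iper^2}(S)$ satisfies $\ell_\gamma(x_n)\to+\infty$ for at least one $\gamma\in A$.

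To prove this I would pass to Thurston's compactification $\overline{\mathcal{T}^{cf}_{\iper^2}(S)}_T$ (see \cite{FLP} and subsection~\ref{subsez:compteich}). Given $(x_n)$ leaving every compact subset, after extracting a subsequence it converges to a boundary point $[\lambda]\in\partial_T\mathcal{T}^{cf}_{\iper^2}(S)$ given by a non-zero measured lamination $\lambda$, so there are reals $s_n>0$ with $s_n\,\ell_\gamma(x_n)\to I(\lambda,\gamma)$ for every $\gamma\in\pi_1(S)$. A routine point here is that $s_n\to 0$: otherwise, along a subsequence $s_n$ stays bounded below, and then all the lengths $\ell_\gamma(x_n)$ stay bounded, which, since the length-spectrum embedding $\mathcal{T}^{cf}_{\iper^2}(S)\hookrightarrow\erre^{\pi_1(S)}$ is proper, would keep $(x_n)$ in a compact set --- a contradiction. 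Granting in addition the sublemma below, choosing $\gamma\in A$ with $I(\lambda,\gamma)>0$ gives $\ell_\gamma(x_n)=\tfrac{1}{s_n}\bigl(I(\lambda,\gamma)+o(1)\bigr)\to+\infty$, which finishes the argument.

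It then remains to prove the sublemma: \emph{if $\lambda$ is a non-zero measured lamination and $A$ fills up $S$, then $I(\lambda,\gamma)>0$ for some $\gamma\in A$.} Here I would use that, because $A$ fills, the geodesic representatives of the curves in $A$ cut $S$ into disks and once-punctured disks, which carry no essential geodesic lamination; hence the support of $\lambda$ cannot be disjoint from all of them, so it meets the geodesic representative of some $\gamma\in A$. If that intersection is transverse, then $I(\lambda,\gamma)>0$. If it is not, then $\gamma$ is a simple closed geodesic occurring as a leaf of $\lambda$ with some weight $w>0$, so $I(\lambda,\gamma')\geq w\,i(\gamma,\gamma')$ for every $\gamma'\in A$; picking $\gamma'\in A$ with $i(\gamma,\gamma')>0$ --- possible since $A$ fills --- again yields a curve of $A$ with positive intersection measure with $\lambda$.

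The only genuinely non-formal ingredient is this sublemma (a non-zero measured lamination has positive intersection number with some curve of a filling collection), and I expect it to be the main obstacle; the rest is bookkeeping around $J_\gamma=2\cosh(\ell_\gamma/2)$ and the already established features of Thurston's compactification. If one prefers to avoid that compactification, there is a purely hyperbolic-geometric alternative: if every $\gamma\in A$ has bounded length along $(x_n)$, the collar lemma forces the systole of $x_n$ to stay bounded below (a pinched simple closed curve would be crossed, hence lengthened without bound, by some $\gamma\in A$), and then Mumford compactness together with the Alexander method --- a mapping class fixing the class of each curve of a filling collection is trivial --- confines $(x_n)$ to a compact subset of $\mathcal{T}^{cf}_{\iper^2}(S)$.
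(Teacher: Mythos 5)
Your proof is correct and follows essentially the same route as the paper's: pass to Thurston's compactification, extract a subsequence converging to a boundary point given by a non-zero measured lamination $\lambda$, use the filling property to produce $\gamma\in A$ with $I(\lambda,\gamma)>0$, and conclude via $J_\gamma=2\cosh(\ell_\gamma/2)$ that $J_\gamma(x_n)$ is unbounded. The only difference is that you make explicit two points the paper leaves implicit --- that the projective scaling factors satisfy $s_n\to 0$, and the sublemma upgrading ``every closed curve meets some curve of $A$'' to ``every non-zero measured lamination has positive intersection number with some curve of $A$'' --- and both are handled correctly.
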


\begin{proof}
We have to show that for every sequence $(x_n) \subset \mathcal{T}^{cf}_{\iper^2}(S)$ that is not contained in a compact subset of $\mathcal{T}^{cf}_{\iper^2}(S)$, the sequence ${(J_\gamma(x_n))}_{\gamma \in A}$ is unbounded in $\erre^{\famil_A}$. Up to subsequences, we can suppose that $(x_n) \freccia x \in \partial_\gfamil \mathcal{T}^{cf}_{\iper^2}(S)$. Let $l$ be a measured foliation associated to $x$ in Thurston's interpretation. As $A$ is a system that fills up, there exists a $\gamma \in A$ such that $I(l,\gamma) \neq 0$, and, this implies that $J_\gamma(x_n)$ is unbounded (see the proof of proposition \ref{prop:Thurston}). Hence $\famil_A$ is proper.
\end{proof}

\begin{proposition}
There exists an injective family $A \subset \pi_1(A)$, consisting on $9g-9+3b$ elements.
\end{proposition}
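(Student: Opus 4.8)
The plan is to produce $A$ from a pants decomposition of $S$, enlarged by a bounded number of transversal curves, and to use the identification of the boundary cone with the space of measured laminations so that ``injective family'' becomes the statement that a finite set of simple closed curves separates laminations by their intersection numbers.

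First I would recall that, by Proposition \ref{prop:Thurston} and the discussion preceding it, $C(\partial_\gfamil \mathcal{T}^{cf}_{\iper^2}(S))$ is identified, as a closed subset of $\erre^{\pi_1(S)}$, with the cone $C(\partial_T\mathcal{T}^{cf}_{\iper^2}(S))$ of measured laminations on $S$, a lamination $l$ corresponding to the tuple ${(I(l,\gamma))}_{\gamma\in\pi_1(S)}$ of its intersection numbers. Under this identification, for a finite subfamily $\famil_A = {\{J_\gamma\}}_{\gamma\in A}$ the canonical map $\partial\pi_{\famil_A}: C(\partial_\gfamil \mathcal{T}^{cf}_{\iper^2}(S)) \freccia C(\partial_{\famil_A}\mathcal{T}^{cf}_{\iper^2}(S)) \subset \erre^A$ is just the restriction of the coordinate projection $\erre^{\pi_1(S)} \freccia \erre^A$. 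Hence $\famil_A$ is an injective family precisely when it is proper and the map $l \freccia {(I(l,\gamma))}_{\gamma\in A}$ is injective on the set of measured laminations on $S$.

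Next I would fix a pants decomposition $\mathcal{P} = \{c_1,\dots,c_N\}$ of $S$ by simple closed curves, so that $N = 3g-3+b$. By the theory of Dehn--Thurston coordinates on the space of measured laminations (see \cite{FLP}), a measured lamination $l$ is uniquely determined by the data ${(m_i,t_i)}_{i=1}^N$, where $m_i = I(l,c_i)\geq 0$ is the intersection number with $c_i$ and $t_i\in\erre$ is the twisting number around $c_i$ (constrained by $t_i\geq 0$ when $m_i = 0$). For each $i$ I would choose two simple closed curves $a_i,b_i$ crossing $c_i$ transversally --- for instance a fixed transversal to $c_i$ together with its image under the Dehn twist along $c_i$ --- arranged so that, once $m_i$ and the intersection numbers with the pants curves adjacent to $c_i$ are known, $I(l,a_i)$ is a ``$V$-shaped'' convex piecewise-linear function of $t_i$, and comparing $I(l,a_i)$ with $I(l,b_i)$ (whose corner is shifted by $m_i$) pins down $t_i$ together with its sign. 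Setting $A = \{c_i,a_i,b_i : 1\leq i\leq N\}$ gives a subset of $\pi_1(S)$ with $3N = 9g-9+3b$ elements.

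It remains to verify the two conditions. For injectivity: if two measured laminations $l,l'$ have the same intersection numbers with every $\gamma\in A$, then by the previous paragraph they have the same Dehn--Thurston coordinates ${(m_i,t_i)}$ relative to $\mathcal{P}$, hence $l = l'$; in particular ${(I(l,\gamma))}_{\gamma\in A} = 0$ forces $l$ to be the empty lamination. This last fact yields properness: since $\gfamil$ is already known to be proper, $\overline{\mathcal{T}^{cf}_{\iper^2}(S)}_\gfamil$ is a compactification with boundary the space of projectivized measured laminations, so if a sequence $(x_n)$ left every compact subset of $\mathcal{T}^{cf}_{\iper^2}(S)$ while $E_{\famil_A}(x_n)$ stayed bounded, a subsequence would converge to a boundary point $[l]$ with $l$ nonzero and $I(l,\gamma) = 0$ for all $\gamma\in A$, contradicting injectivity; thus $E_{\famil_A}$ is proper and $\famil_A \in P_\gfamil$. (Alternatively, one can arrange that $\mathcal{P}\cup\{a_1,\dots,a_N\}$ fills up $S$, and then $\famil_A$ is proper directly by the previous proposition.) The only genuinely non-formal point is the construction of the transversals $a_i,b_i$ and the verification that $I(l,c_i), I(l,a_i), I(l,b_i)$ recover $t_i$; this is a local computation in an annular neighbourhood of $c_i$, and once it is in place everything else is bookkeeping with the identification of $\partial_\gfamil\mathcal{T}^{cf}_{\iper^2}(S)$ with the space of measured laminations established earlier.
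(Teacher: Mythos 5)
Your proposal is correct and follows essentially the same route as the paper: a pants decomposition $\{K_i\}$ of $3g-3+b$ curves together with two transversal curves per $K_i$ (your $a_i,b_i$ play the role of Thurston's $K_i',K_i''$ from \cite{FLP}), with injectivity coming from the Dehn--Thurston classification of measured foliations and properness from the filling property (or your equivalent boundary argument). The only difference is that you spell out the twist-parameter bookkeeping that the paper delegates wholesale to \cite{FLP}, which is a matter of exposition rather than of method.
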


\begin{proof}
There exists $3g-3+b$ simple curves on $S$ (denoted by $K_1 \dots K_{3g-3+b}$) that decompose it in $2g-2+b$ pair-of-pants, $b$ of them containing a boundary component of $S$. Let $K_i$ be a curve that is the common boundary of two pair-of-pants whose union will be denoted by $P_i$. We denote by $K_i', K_i''$ the two simple closed curves in $P_i$ defined by Thurston in the classification of measured foliation (See \cite{FLP}). Let $A \subset \pi_1(S)$ be the set of the homotopy classes of the curves $K_i$, $K_i'$ and $K_i''$. This set fills up, hence $\famil_A$ is proper, and the map $\partial \pi_\famil:\partial_\gfamil \mathcal{T}^{cf}_{\iper^2}(S) \freccia \partial_{\famil_A} \mathcal{T}^{cf}_{\iper^2}(S)$ is well defined. The fact that this map is injective from Thurston's classification of measured foliations (see \cite{FLP}).
\end{proof}

\subsection{The smallest injective families}

It may be useful to find a set $A \subset \pi_1(S)$ of minimal cardinality such that $\famil_A$ is injective.  Such a set can be found with $6g-5+2b$ elements, just one more than the dimension of $\mathcal{T}^{cf}_{\iper^2}(S)$. What we need is a set of curves that fill up and such that the map ${(I(\cdot,c))}_{c \in A}$ from the set of equivalence classes of measured laminations in $\erre^{\famil_A}$ is injective. If $k=0$ such a set is described in \cite{Ha2}, else it is described in \cite{Ha1}. In the following $A$ will denote a set with these properties.

Let $W = E_{\famil_A}(\mathcal{T}^{cf}_{\iper^2}(S)) \subset \erre^{6g-5+2b}$, a closed semi-algebraic set of dimension less than or equal to the dimension of $\mathcal{T}^{cf}_{\iper^2}(S)$, $6g-6+2b$. The boundary of $\partial_{\famil_A} \mathcal{T}^{cf}_{\iper^2}(S)$ is equal to the boundary $\partial W$. As $\famil_A$ is injective, the boundary $\partial W$ has dimension $6g-7+2b$, and this implies that the dimension of $W$ is exactly $6g-6+2b$. Hence $W$ is an hypersurface in $\erre^{6g-5+2b}$, and the cone over its boundary is contained in a tropical hypersurface. In this way we identify the cone over the boundary of the Teichm\"uller space with a subpolyhedron of a tropical hypersurface in $\erre^{6g-5+2b}$.

If $k>0$ and $A$ is the set described in \cite{Ha1}, the map $E_{\famil_A}$ is also injective. This may be shown using the fact that the map 
$$\mathcal{T}^{cf}_{\iper^2}(S) \ni x \freccia {(\ell_\gamma(x))}_{\gamma \in A} \in \erre^A$$
is injective (see \cite{Ha1}), and that $\famil_A$ is the composition of this map with $\cosh$.

So we have a semi-algebraic homeomorphism from $\mathcal{T}^{cf}_{\iper^2}(S)$ to the closed semi-algebraic hypersurface $W$.

\subsection{Description of the piecewise linear structure}

We have constructed a piecewise linear structure on the boundary of the Teichm\"uller spaces $\mathcal{T}^{cf}_{\iper^2}(S)$. A piecewise linear structure on $\partial_\gfamil \mathcal{T}^{cf}_{\iper^2}(S)$ was already known, it was defined by Thurston. See \cite{Pap} for details on this structure.

\begin{theorem}
The piecewise linear structure defined above on $\partial_\gfamil \mathcal{T}^{cf}_{\iper^2}(S)$ is the same as the one discussed in \cite{Pap}.
\end{theorem}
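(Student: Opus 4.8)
The plan is to show that the two piecewise linear structures on $\partial_\gfamil \mathcal{T}^{cf}_{\iper^2}(S)$ agree by exhibiting a common system of charts. On the one hand, our structure is, by the proposition in subsection \ref{subsez:injective}, the one induced by any injective family $\famil_A \subset \gfamil$: the map $\partial \pi_{\famil_A}:C(\partial_\gfamil \mathcal{T}^{cf}_{\iper^2}(S)) \freccia C(\partial_{\famil_A} \mathcal{T}^{cf}_{\iper^2}(S)) = \ameba_0(E_{\famil_A}(\mathcal{T}^{cf}_{\iper^2}(S)))$ is a piecewise linear isomorphism onto a polyhedral complex in $\erre^A$, and we can use such a map as a chart. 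On the other hand, Thurston's structure on $\partial_T \mathcal{T}^{cf}_{\iper^2}(S)$, as described in \cite{FLP} and \cite{Pap}, is built from the global coordinates given by the intersection numbers with a filling system of curves; for a suitable finite set $A$ of curves that fill up and such that ${(I(\cdot,\gamma))}_{\gamma \in A}$ is injective on the space of measured laminations (e.g. the sets in \cite{Ha2}, \cite{Ha1} used above, or the classical train-track / Dehn--Thurston coordinates), the map $l \freccia {(I(l,\gamma))}_{\gamma \in A}$ is a piecewise linear chart for Thurston's structure.

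First I would fix such a finite set $A$, chosen so that $\famil_A = {\{J_\gamma\}}_{\gamma \in A}$ is simultaneously an injective family for $\gfamil$ in our sense and a global piecewise linear chart for Thurston's structure; the earlier subsections already produce sets with both properties. Next I would invoke proposition \ref{prop:Thurston}: it identifies $C(\partial_\gfamil \mathcal{T}^{cf}_{\iper^2}(S))$ with $C(\partial_T \mathcal{T}^{cf}_{\iper^2}(S))$ as subsets of $\erre^{\pi_1(S)}$, via the equality of the limit of ${[\log_e(J_\gamma(x_n))]}$ with the projectivized intersection spectrum ${[I(l,\gamma)]}$. Restricting this identification to the coordinates indexed by $A$ shows that the chart $\partial \pi_{\famil_A}$ (coming from our construction) and the chart $l \freccia {(I(l,\gamma))}_{\gamma \in A}$ (coming from Thurston) are literally the same map on the same set. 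Since in each structure the transition to any other chart in its own atlas is piecewise linear, and the two atlases share this chart, the identity map between the two structures is piecewise linear with piecewise linear inverse.

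The step I expect to be the main obstacle is verifying that the chart $l \freccia {(I(l,\gamma))}_{\gamma \in A}$ really is a piecewise linear chart for Thurston's structure as presented in \cite{Pap} — i.e. reconciling the various equivalent descriptions of the piecewise linear structure on $\mathcal{ML}(S)$ (via train tracks, via Dehn--Thurston twist-and-intersection coordinates, via pair-of-pants decompositions) and checking that intersection numbers with a finite filling family are piecewise linear functions in those coordinates and give an atlas-compatible chart. Once that bookkeeping is in place, the identification is immediate from proposition \ref{prop:Thurston}, since the cone over the boundary, its coordinates, and hence the piecewise linear atlas are all pinned down by the values ${(I(l,\gamma))}_{\gamma}$, which are exactly the limits of the translation-length functions our construction uses.
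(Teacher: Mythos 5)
Your overall plan is the same as the paper's (compare the injective-family chart coming from $\partial \pi_{\famil_A}$ with Thurston's atlas), but the step you set aside as ``the main obstacle'' and ``bookkeeping'' is in fact the entire mathematical content of the paper's proof, and as written your argument assumes it rather than proves it. In \cite{Pap} the piecewise linear structure is \emph{defined} by train-track charts: for a train track $\tau$ whose complementary regions are triangles, the cone of measures $C_\tau \subset {(\erre_{>0})}^n$ is mapped by the enlargement operation $\phi_\tau$ onto an open subset of $C(\partial_\gfamil \mathcal{T}^{cf}_{\iper^2}(S))$, and these are the charts. It is not defined via intersection numbers with a filling family, so your assertion that $l \freccia {(I(l,\gamma))}_{\gamma \in A}$ ``is a piecewise linear chart for Thurston's structure'' is precisely the statement that needs proof, not an input. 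Moreover, proposition \ref{prop:Thurston} only identifies the two cones set-theoretically inside $\erre^{\pi_1(S)}$ and matches convergence of sequences; it carries no piecewise linear information, so restricting it to the coordinates indexed by $A$ does not produce a shared chart of the two atlases.

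What is missing, and what the paper actually supplies, is the verification that for each such $\tau$ and each $\gamma \in A$ the composition $C_\tau \ni \mu \freccia I(f_{\tau,\mu},\gamma)$ (the $\gamma$-coordinate of $\partial\pi_{\famil_A}\circ\phi_\tau$, where $f_{\tau,\mu}$ is the enlarged foliation) is piecewise linear: choosing a closed curve $c$ freely homotopic to $\gamma$, transverse to $\tau$ and avoiding the switches, the total weight of the edges crossed by $c$, counted with multiplicity, is a linear function of $\mu$ with positive integer coefficients, and $I(f_{\tau,\mu},\gamma)$ is the infimum of these quantities over the free homotopy class, which locally is a minimum of finitely many such linear functions, hence piecewise linear. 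With this computation (or an explicit citation establishing that intersection-number functions are piecewise linear in train-track coordinates), your comparison of charts goes through and the argument matches the paper's; without it, the claim that the identity map is piecewise linear between the two structures is unsupported.
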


\begin{proof}
In \cite{Pap} the piecewise linear structure is defined using train tracks. An admissible train track on $S$, is a graph embedded in $S$ satisfying certain conditions. A measure on a train track is a function from the set of the edges in $\erre_{> 0}$, satisfying, again, certain conditions. There exists an enlargement operation associating to every measured admissible train track a measured foliation $f$ on $S$, hence a point of $C(\partial_\gfamil \mathcal{T}^{cf}_{\iper^2}(S))$.

If $\tau$ is a fixed train track with $n$ edges, every measure on $\tau$ may be identified with a point of ${(\erre_{> 0})}^n$, and the subset of all these point is a polyhedral conic subset, that will be denoted by $C_\tau$. The enlargement operation defines a map $\phi_\tau: C_\tau \freccia C(\partial_\gfamil \mathcal{T}^{cf}_{\iper^2}(S))$. If every connected component of $S \setminus \tau$ is a triangle, the image $\phi_\tau(C_\tau)=V_\tau$ is an open subset of $C(\partial_\gfamil \mathcal{T}^{cf}_{\iper^2}(S))$. Moreover the map $\phi_\tau$ is an homeomorphism with its image.

The union of the open sets $V_\tau$ is the whole $C(\partial_\gfamil \mathcal{T}^{cf}_{\iper^2}(S))$. So every $V_\tau$ is identified with $C_\tau$ in such a way that the changes of charts are piecewise linear. This way we have described a piecewise linear atlas for $C(\partial_\gfamil \mathcal{T}^{cf}_{\iper^2}(S))$, the piecewise linear structure defined by Thurston.

We want to show that the identity map is a piecewise linear map if we endow the domain with the Thurston's piecewise linear structure, and the codomain with the structure defined above. This implies that the two structure are equal.

To show this we need to prove that the maps $\phi_\tau$ are piecewise linear if we endow the codomain with the structure defined above. We choose an injective family $A \subset \pi_1(S)$, and the cone $C(\partial_{\famil_A} \mathcal{T}^{cf}_{\iper^2}(S))$ is a subset of $\erre^{\famil_A}$.
The coordinates of the map $\phi_\tau: C_\tau \freccia C(\partial_{\famil_A} \mathcal{T}^{cf}_{\iper^2}(S)) \subset \erre^{\famil_A}$, can be described as follows, for each element $\gamma \in A$ the corresponding coordinate of $\phi_\tau$ is the function that associate to a measure $\mu$ on $\tau$ the number ${(I(f,\gamma))}$, where $f$ is the foliation constructed by the enlargement of the measured train track $(\tau,\mu)$.

For all $\gamma \in A$ it is easy to see that the corresponding coordinate is piecewise linear. We choose a curve $c$ that is freely homotopic to $\gamma$, such that $c$ does not contain any vertex of $\tau$ and such that it intersects every edge transversely. Now we define the function $p_\gamma: C_\tau \freccia \erre$ as the sum of the measures of all the edges intersected by $c$, counted with multiplicity. This function is the restriction of a linear function with positive integer coefficients. The coordinate of $\phi_\tau$ corresponding to $\gamma$ is simply the minimum of all the function $p_\gamma$, and locally the minimum may be taken over a finite numbers of linear functions, so the result is a piecewise linear function.
\end{proof}


\begin{thebibliography}{BJSST07}

	\bibitem[A1]{A1} D. Alessandrini, \emph{Logarithmic limit sets of real semi-algebraic sets}, preprint on arXiv:0707.0845 v2.
	\bibitem[A2]{A2} D. Alessandrini, \emph{Tropicalization of group representations}, pre\-print on arXiv:math.GT/0703608 v3.

	\bibitem[BCR98]{BCR98} J. Bochnak, M. Coste, M.-F. Roy, \emph{Real algebraic geometry}, Springer-Verlag, Berlin, 1998.

	\bibitem[Be0]{Be0} Y. Benoist, \emph{A survey on divisible convex sets}.
	\bibitem[Be1]{Be1} Y. Benoist, \emph{Convexes divisibles I}, TIFR. Stud. Math. {\bfseries 17} (2004) 339--374.
	\bibitem[Be2]{Be2} Y. Benoist, \emph{Convexes divisibles II}, Duke Math. J. {\bfseries 120} (2003) 97--120.
	\bibitem[Be3]{Be3} Y. Benoist, \emph{Convexes divisibles III}, Ann. Sci. ENS {\bfseries 38} (2005) 793--832.
	\bibitem[Be4]{Be4} Y. Benoist, \emph{Convexes divisibles IV}, Invent. Math. {\bfseries 164} (2006) 249--278.

	\bibitem[CEG87]{CEG87} R. D. Canary, D. B. A. Epstein, P. Green, \emph{Notes on notes of Thurston}, in ``Analytical and geometric aspects of hyperbolic space'', LMSLN {\bfseries 111}, Cambridge (1987), 3--92.

	\bibitem[Co]{Co} M. Coste, \emph{An introduction to o-minimal geometry}, lecture notes\\ http://perso.univ-rennes1.fr/michel.coste/polyens/OMIN.pdf

	\bibitem[CS83]{CS83} M. Culler, P. B. Shalen, \emph{Varieties of group representations and splittings of $3$-manifolds}, Annals of Math. {\bfseries 117} (1983), 109--146.

	\bibitem[Dr]{Dr} L. van den Dries, \emph{Tame topology and o-minimal structures}, London Mathematical Society Lecture Note Series 248.

	\bibitem[FLP]{FLP} A. Fathi, F. Laudenbach, V. Po\'enaru, \emph{Travaux de Thurston sur les surfaces}, Ast\'erisque, 1979.

	\bibitem[G]{G} W. Goldman, \emph{Geometric structures and varieties of representations}.

	\bibitem[Gl37]{Gl37} A. Gleyzal, \emph{Transfinite numbers}, Proceedings of the National Academy of Sciences {\bfseries 23} (1937) 581--587.

	\bibitem[Ha1]{Ha1} U. Hamenst\"adt, \emph{Length functions and parameterizations of Teichm\"uller space for surfaces with cusps}, Ann. Acad. Sci. Fenn. Math. {\bfseries 28} (2003), no. 1, 75--88.

	\bibitem[Ha2]{Ha2} U. Hamenst\"adt, \emph{Parametrizations of Teichm\"uller spaces and its Thurston boundary}, Geometric Analysis and Nonlinear PDE, 81--88, Springer, Berlin, 2003.

	\bibitem[JM87]{JM87} D. Johnson, J. J. Millson, \emph{Deformation spaces associated to compact hyperbolic manifolds}, in ``Discrete groups in geometry and analysis'', Prog. in Math. {\bfseries 67} (1987), 48--106, Boston--Basel.

	\bibitem[Ki01]{Ki01} I. Kim, \emph{Rigidity and deformation spaces of strictly convex real projective structures on compact manifolds}, J. Diff. Geometry {\bfseries 58} (2001) 189--218.

	\bibitem[Lo84]{Lo84} W. L. Lok, \emph{Deformations of locally homogeneous spaces and Kleinian groups}, \ Doctoral dissertation, Columbia University (1984).

	\bibitem[MS84]{MS84} J. W. Morgan, P. B. Shalen, \emph{Valuations, trees, and degeneration of hyperbolic structures, I}, Annals of Math. {\bfseries 120} (1984), 401--476.
	\bibitem[MS88]{MS88} J. W. Morgan, P. B. Shalen, \emph{Degeneration of hyperbolic structures, II: Measured laminations in $3$-manifolds}, Annals of Math. {\bfseries 127} (1988), 403--456.
	\bibitem[MS88']{MS88'} J. W. Morgan, P. B. Shalen, \emph{Degeneration of hyperbolic structures, III: Actions of $3$-manifold groups on trees and Thurston's compactness theorem}, Annals of Math. {\bfseries 127} (1988), 457--519.

	\bibitem[Mu76]{Mu76} D. Mumford, \emph{Algebraic Geometry I Complex Projective Varieties}, Springer-Verlag, 1976.

	\bibitem[MFK94]{MFK94} D. Mumford, J. Fogarty, F. Kirwan, \emph{Geometric invariant theory}, $3^{rd}$ ed., Springer, 1994.

	\bibitem[Na00]{Na00} K. Nakamoto, \emph{Representation Varieties and Character Varieties}, Publ. RIMS, Kyoto Univ. {\bfseries 36} (2000), 159--189.

	\bibitem[Pap]{Pap} M. A. Papadopoulos, \emph{Reseaux ferroviaires, diffeomorphismes pseudo-Anosov et automorphismes symplectiques de l'homologie d'une surface}, Phd Thesis, 1983.

	\bibitem[Pr76]{Pr76} C. Procesi, \emph{The invariant theory of $n \times n$ matrices}, Advances in Math. {\bfseries 19} (1976), 306--381.

	\bibitem[ST]{ST} B. Sturmfels, J. Tevelev, \emph{Elimination Theory for Tropical Varieties}, arXiv:0704.3471.

	\bibitem[Te]{Te} J. Tevelev, \emph{Compactifications of subvarieties of tori}, American Journal of Mathematics {\bfseries 129} (2007) 1087--1104.

	\bibitem[Th]{Th} W. Thurston, \emph{The geometry and topology of three-manifolds}, preprint.

	\bibitem[Ve70]{Ve70} J. Vey, \emph{Sur les automorphismes affines des ouverts convexes saillants}, Ann. Scu. Norm. Sup. Pisa {\bfseries 24} (1970) 641--665.

	\bibitem[ZS2]{ZS2} O. Zariski, P. Samuel, \emph{Commutative Algebra}, vol 2, Springer-Verlag, 1975.

\end{thebibliography}
\end{document}